\newtheorem{theorem}{Theorem}[section]
\newtheorem{lemma}[theorem]{Lemma}
\newtheorem{corollary}[theorem]{Corollary}
\newtheorem{definition}[theorem]{Definition}
\newtheorem{remark}[theorem]{Remark}
\numberwithin{equation}{section}
\newenvironment{proof}[1][Proof]{\noindent\textbf{#1.} }{\hfill $\Box$}
 \makeatletter\setlength{\textwidth}{15.0cm}
\begin{document}

\author{Guo Lin\thanks{E-mail: ling@lzu.edu.cn. }  \\
{School of Mathematics and Statistics, Lanzhou University,}\\
{Lanzhou, Gansu 730000, People's Republic of China}}
\title{\textbf{Traveling
Wave Solutions for Integro-Difference Systems}} \maketitle

\begin{abstract}
This paper is concerned with the traveling
wave solutions for  integro-difference systems of higher order. By using Schauder fixed point theorem, the existence of  traveling wave solutions is reduced to the existence of generalized upper and lower solutions. Then the asymptotic behavior of traveling wave solutions is
studied by the idea of contracting rectangles. To illustrate our results, the  traveling wave solutions of three systems are considered, which completes some known results.

\textbf{Keywords}: generalized upper and lower solutions; contracting rectangle; asymptotic behavior; asymptotic spreading

\textbf{AMS Subject Classification (2000)}:  45C05; 45M05; 92D40.
\end{abstract}

\section{Introduction}
\noindent

In this paper, we investigate the existence and asymptotic behavior of traveling wave solutions of the following integro-difference system
\begin{equation}\label{1}
u_{n+1}^i(x)=\int_{\mathbb{R}}P_i[u_{n-\tau+1}^1(y), \cdots, u_{n}^1(y), u_{n-\tau+1}^2(y),\cdots, u_{n}^m(y)]k_i (x-y)dy,
\end{equation}
in which $m\in \mathbb{N}$ and $ \tau\in \mathbb{N}$ are constants, $i\in I=: \{1,2, \cdots, m\}, n\in \mathbb{N} \bigcup \{0\}, x\in \mathbb{R}, u_n^i\in \mathbb{R}, P_i: \mathbb{R}^{m\times \tau}\to \mathbb{R},$ $k_i:\mathbb{R}\to\mathbb{R}^+$ is a probability function or kernel function. Moreover, $P_i$ satisfies the following assumptions:
\begin{description}
\item[(P1)] there exists $\mathbf{M}=(M_1, M_2,\cdots , M_m)$ such that $[\mathbf{0}, \mathbf{M}]$ with $\mathbf{0}=(0,0,\cdots, 0)$ is an invariant region of the corresponding difference system of \eqref{1}, i.e.,
    \[
    0\le P_i [h_{11}, h_{12}, \cdots, h_{m\tau}]\le M_i
    \]
    with
    \[
    0\le h_{ij} \le M_i, i\in I, j\in J=:  \{1, 2, \cdots, \tau\};
    \]
    \item[(P2)] there exists $L>0$ such that
    \[
    |P_i[h_{11},h_{12},\cdots, h_{m\tau}]-P_i[f_{11},f_{12},\cdots, f_{m\tau}]|\le L \sum_{l\in I, j\in J}|h_{lj}-f_{lj}|
    \]
    for any $h_{lj},f_{lj}\in [0,M_l], i, l\in I, j\in J;$
    \item[(P3)] $P_i[0,0,\cdots , 0]=0$ and there exists $\mathbf{E}=(E_1, E_2, \cdots, E_m)$ such that
    \[
P_{i}\left[ \overset{\tau}{\overbrace{E_{1},\cdots ,E_{1}}},\overset{\tau}{%
\overbrace{E_{2},\cdots ,E_{2}}},E_3,\cdots ,E_{m}\right] =E_{i},i\in I;
\]
\item[(P4)] $\mathbf{0}\ll \mathbf{E}\le \mathbf{M}.$
\end{description}
Moreover, for every $i\in I,$ the probability function $k_i$ satisfies the following conditions:
\begin{description}
\item[(k1)] $k_i: \mathbb{R}\to\mathbb{R}^+$ is Lebesgue measurable and integrable;
    \item[(k2)] $k_i: \mathbb{R}\to\mathbb{R}^+$ satisfies $k_i(x)=k_i(-x),x\in\mathbb{R};$
    \item[(k3)] $\int_{\mathbb{R}} k_i (y)dy=1$ and $\int_{\mathbb{R}} k_i (y)e^{\lambda y} dy <\infty$ for any $\lambda \ge 0.$
\end{description}

If $m=1$ and $\tau=1,$ then \eqref{1} becomes
\begin{equation}\label{2}
v_{n+1}(x)=\int_{\mathbb{R}} b(v_n(y))k(x-y)dy,
\end{equation}
in which $b:\mathbb{R}^+\to \mathbb{R}^+$ is bounded and continuous and $b(0)=0$, $k:\mathbb{R}\to \mathbb{R}^+$
satisfies (k1)-(k3). In the past three decades, the traveling wave solutions of \eqref{2} have been widely studied, we refer to Creegan and Lui \cite{creeganlui}, Hsu and Zhao \cite{hsuzhao}, Kot \cite{kot}, Kot et al. \cite{kotlewis}, Liang and Zhao \cite{liang}, Lui \cite{lui,lui3,lui1,lui2}, Neubert and Caswell \cite{neu},
Wang et al. \cite{wangkot}, Weinberger \cite{wein,wein2}, Weinberger et al. \cite{weinberger},  Weinberger et al. \cite{weink} and Yi et al. \cite{yi}. In these papers, the (local) monotonicity of $b$ plays a very important role.

If $\tau=1,$ then Liang and Zhao \cite{liang},  Weinberger et al. \cite{weinberger} and Yi et al. \cite{yi} investigated the propagation modes of \eqref{1} by traveling wave solutions and asymptotic spreading. Similar to the study of scalar equations, the monotonicity of semiflows (see \cite{smith}) is the most essential assumption in \cite{liang,weinberger}. Recently, Lin and Li \cite{linlidc} and Lin et al. \cite{llrjmb} considered the existence of traveling wave solutions of a competitive system ($\tau=1, m=2$) by a cross iteration scheme.

If $\tau=2,m=1,$ and the birth function is (locally) monotone, then the traveling wave solutions and asymptotic spreading were studied by Lin and Li \cite{linlijmaa} and Pan and Lin \cite{panlin}.
When $\tau>1,$ it is possible that standard upper and lower solutions make no sense in the study of traveling wave solutions of \eqref{1}. For example, consider the following model
\begin{equation}\label{3}
v_{n+1}(x)=\int_{\mathbb{R}} \frac{(1+d) v_n(y)}{1+d (v_n(y)+av_{n-1}(y))} k(x-y)dy,
\end{equation}
in which $d >0, a\ge 0$ are constants. If $a=0,$ then the traveling wave solutions of \eqref{3} have been investigated by Kot \cite{kot}. For $a>0,$ it is clear that
\[
\int_{\mathbb{R}} \frac{(1+d) v_n(y)}{1+d (v_n(y)+av_{n-1}(y))} k(x-y)dy
 \]
is monotone increasing in $v_n(\cdot)$ while decreasing in $v_{n-1}(\cdot)$ such that the standard upper and lower solutions cannot be applied. Moreover, \eqref{3} does not satisfy the local monotonicity in Pan and Lin \cite{panlin}. In this paper, we shall introduce the generalized upper and lower solutions to overcome the difficulty that arises from the deficiency of classical comparison principle. Combining the generalized upper and lower solutions with Schauder fixed point theorem, we reduce the existence of positive traveling wave solutions of \eqref{1} to the existence of generalized upper and lower solutions.

The asymptotic behavior of traveling wave solutions is very important since it reflects the transition between different steady states. If a system can generate monotone semiflows, then the behavior can be proved by the monotonicity of traveling wave solutions \cite{liang,weinberger,wuzou2}. Otherwise, the study will be very hard. In
\cite{linlidc,llrjmb}, we obtained the asymptotic behavior by constructing very precise upper and lower solutions of a competitive system with special kernel functions. When the kernel functions are different from those in \cite{linlidc,llrjmb}, then the study is still very complex. Furthermore, if a system, e.g., \eqref{3}, does not admit classical comparison principle, then it is difficult to obtain the asymptotic behavior of traveling wave solutions by constructing auxiliary systems and functions (e.g., upper and lower solutions). Recently, Lin and Ruan \cite{linruan} established the asymptotic behavior of traveling wave solutions of some non-monotonic delayed reaction-diffusion systems via  contracting rectangles. Motivated by the idea in \cite{linruan}, we shall consider the asymptotic behavior of traveling wave solutions of \eqref{1} by the contracting rectangles of the corresponding difference system.

To illustrate our main results, we also consider the traveling wave solutions of equation \eqref{3} and competitive systems of higher order terms. Although these models are not monotone, we still obtain the existence, nonexistence and asymptotic behavior of traveling wave solutions. In particular, we give the minimal wave speed of nontrivial traveling wave solutions formulating the simultaneous invasion of all competitors in population dynamics \cite{shi}. Note that the minimal wave speed of the competitive system just depends on the linearized model near the unstable steady state, which implies that the nonmonotone nonlinear terms may be harmless to the invasion of multi competitive species.

The remainder of this paper is organized as follows. In Section 2, we give some preliminaries. In Section 3, the existence of positive traveling wave solutions is studied by generalized upper and lower solutions. In Section 4, the asymptotic behavior of traveling wave solutions is investigated by contracting rectangles. In the last section, we consider some examples to illustrate our results, which completes some known results.

\section{Preliminaries}
\noindent

In this paper, we shall use the standard partial ordering in $\mathbb{R}^m.$ Define $C$ by
\[
C=\{U(x)| U(x): \mathbb{R}\rightarrow \mathbb{R}^m \text{ is
uniformly continuous and bounded for } x\in\mathbb{R}\}
\]
equipped with the compact open topology.
Moreover, if $\textbf{A}\le \textbf{B}\in \mathbb{R}^m$, then
\[
C_{[\mathbf{A},\mathbf{B}]}=\{ U: U \in C \text{ and  }
\mathbf{A}\leq U(x) \leq \mathbf{B} \text{ for all }x\in
\mathbb{R}\}.
\]
Let $\|\cdot \|$ be the supremum norm in $\mathbb{R}^m$ and $\mu >0$ be a constant. Define
\[
B_{\mu }\left( \mathbb{R},\mathbb{R}^{m}\right) =\left\{\Phi \in
C:\sup_{x \in \mathbb{R}}\| \Phi (x)\|
e^{-\mu \left\vert x \right\vert }<\infty \right\}
\]%
and
\[
\left\vert \Phi \right\vert _{\mu }=\sup_{x \in
\mathbb{R}}\| \Phi (x )\| e^{-\mu \left\vert
x \right\vert }.
\]%
Then  $\left( B_{\mu }\left( \mathbb{R},\mathbb{R}%
^{m}\right) ,\left\vert \cdot \right\vert _{\mu}\right) $ is a
Banach space.

The definition of traveling wave solutions is given as follows.
\begin{definition}
\label{de1}{\rm A traveling wave solution of \eqref{1} is a special
solution with the form
\[
u_n^i(x)=\phi_i (\xi), \xi= x+cn \in\mathbb{R},i\in I,
\]
where $\Phi=(\phi_1,\phi_2, \cdots, \phi_m) \in C$ is the wave profile
that propagates through the one-dimensional spatial domain $\mathbb{R}$ at the
constant wave speed $c>0$.}
\end{definition}
By Definition \ref{de1}, $\phi_i$ and $c$ must satisfy
\begin{eqnarray*}
\phi _{i}(\xi +c)&=&\int_{\mathbb{R}}P_{i}[\phi _{1}(s-\tau c+c),\cdots
,\phi _{1}(s),\phi _{2}(s-\tau c+c),\cdots ,\phi _{m}(s)]k_{i}(\xi -s)ds \\
&=&\int_{\mathbb{R}}P_{i}[\phi _{1}(\xi -s+\tau c-c),\cdots ,\phi _{1}(\xi
-s),\cdots ,\phi _{m}(\xi -s)]k_{i}(s)ds
\end{eqnarray*}
or
\begin{eqnarray}
\phi _{i}(\xi ) &=&\int_{\mathbb{R}}P_{i}[\phi _{1}(s-\tau c+c),\cdots ,\phi
_{1}(s),\phi _{2}(s-\tau c+c),\cdots ,\phi _{m}(s)]k_{i}(\xi -s-c)ds
\nonumber \\
&=&\int_{\mathbb{R}}P_{i}[\phi _{1}(\xi -s-\tau c),\cdots ,\phi _{1}(\xi
-s-c),\cdots ,\phi _{m}(\xi -s-c)]k_{i}(s)ds  \label{4}
\end{eqnarray}
for $i\in I, \xi\in\mathbb{R}.$ Of course, these expressions are equivalent to each other, but \eqref{4} is good at defining an operator, which will be utilized in the next section.

Moreover, we also require that
\begin{equation}\label{5}
\lim_{\xi\to -\infty}\phi_i(\xi)=0, \liminf_{\xi\to + \infty}\phi_i(\xi) >0, i\in I
\end{equation}
or
\begin{equation}\label{500}
\lim_{\xi\to -\infty}\phi_i(\xi)=0, \lim_{\xi\to + \infty}\phi_i(\xi) = E_i, i\in I.
\end{equation}
Then a traveling wave solution satisfying \eqref{4} with \eqref{5} or \eqref{4} with \eqref{500} could formulate the synchronous invasion process of $m$ species in population dynamics \cite{shi}.

\section{Generalized Upper and Lower Solutions}
\noindent

In this section, we establish the existence of positive solutions of \eqref{4}. The definition of generalized upper and lower solutions is given as follows.
\begin{definition}\label{de2}
{\rm $\overline{\Phi}(\xi)=(\overline{\phi}_1(\xi), \overline{\phi}_2(\xi), \cdots, \overline{\phi}_m(\xi))$ and $\underline{\Phi}(\xi)=(\underline{\phi}_1(\xi), \underline{\phi}_2(\xi), \cdots, \underline{\phi}_m(\xi))\in C_{[\mathbf{0}, \mathbf{M}]}$ are a pair of generalized upper and lower solutions of \eqref{4} if
\begin{equation}\label{7}
\overline{\phi}_i(\xi)\ge \int_{\mathbb{R}}P_i[\psi_{11}(s-\tau c+c), \cdots, \psi_{1\tau}(s), \psi_{21}(s-\tau c+c),\cdots, \psi_{m\tau}(s)]k_i (\xi- s-c)ds,
\end{equation}
and
\begin{equation}\label{8}
\underline{\phi}_i(\xi)\le \int_{\mathbb{R}}P_i[\psi_{11}(s-\tau c+c), \cdots, \psi_{1\tau}(s), \psi_{21}(s-\tau c+c),\cdots, \psi_{m\tau}(s)]k_i (\xi- s-c)ds
\end{equation}
for all $i\in I, \xi\in \mathbb{R}$ and any uniformly continuous functions  $\psi_{ij} (\xi)$ satisfying
\[
\underline{\phi}_i(\xi)\le \psi_{ij} (\xi) \le \overline{\phi}_i(\xi) ,i\in I,  j\in J,\xi\in\mathbb{R}.
\]
}
\end{definition}
\begin{remark}{\rm
The definition implies that $\overline{\Phi}(\xi) \ge \underline{\Phi}(\xi),\xi\in\mathbb{R}.$}
\end{remark}

If $0\le \psi_{ij} (\xi) \le M_i$ is continuous for $\xi\in\mathbb{R}, i\in I, j\in J,$ then
\[
Q_i (\xi)=\int_{\mathbb{R}}P_i[\psi_{11}(s-\tau c+c), \cdots, \psi_{1\tau}(s), \psi_{21}(s-\tau c+c),\cdots, \psi_{m\tau}(s)]k_i (\xi- s-c)ds
\]
is uniformly continuous in $\xi\in\mathbb{R}$ by (k1). Moreover, $Q_i(\xi)$ admits the following property.
\begin{lemma}\label{le1}
Assume that $\underline{\phi}_i(\xi)\le \psi_{ij} (\xi)\le \overline{\phi}_i(\xi)$ for $i\in I, j\in J, \xi\in\mathbb{R}.$ Then $\underline{\phi}_i(\xi)\le Q_i (\xi)\le \overline{\phi}_i(\xi)$ for $i\in I,  \xi\in\mathbb{R}.$
\end{lemma}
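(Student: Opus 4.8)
The plan is to read the conclusion off directly from Definition \ref{de2}. The crucial observation is that the function $Q_i(\xi)$ under consideration is \emph{literally} the right-hand side of both \eqref{7} and \eqref{8}: comparing $Q_i(\xi)$ with the integrals in those two inequalities, the integrands $P_i[\,\cdots\,]$, the kernel $k_i(\xi-s-c)$, and the shifted arguments $s-\tau c+c,\dots,s$ coincide term by term. Moreover, the standing hypothesis of the lemma, $\underline{\phi}_i(\xi)\le\psi_{ij}(\xi)\le\overline{\phi}_i(\xi)$ for all $i\in I,\,j\in J,\,\xi\in\mathbb{R}$, is exactly the admissibility requirement imposed on the test functions $\psi_{ij}$ in Definition \ref{de2}.

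Granting this identification, the proof is a one-line deduction. First I would apply inequality \eqref{8} with the given $\psi_{ij}$, whose right-hand side is precisely $Q_i(\xi)$, to obtain $\underline{\phi}_i(\xi)\le Q_i(\xi)$ for every $i\in I$ and $\xi\in\mathbb{R}$. Then I would apply inequality \eqref{7} with the same $\psi_{ij}$ to obtain $Q_i(\xi)\le\overline{\phi}_i(\xi)$. Combining the two chains yields $\underline{\phi}_i(\xi)\le Q_i(\xi)\le\overline{\phi}_i(\xi)$ for all $i\in I$ and $\xi\in\mathbb{R}$, which is the assertion.

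The only point that requires a moment's care---and the closest thing to an obstacle---is matching the regularity hypotheses so that Definition \ref{de2} is genuinely applicable. That definition quantifies over uniformly continuous test functions $\psi_{ij}$, while $Q_i$ must itself be well defined. Both requirements are already supplied by the set-up: the $\psi_{ij}$ are continuous and bounded in $[0,M_i]$ (as specified in the paragraph preceding the lemma and inherited from $\overline{\Phi},\underline{\Phi}\in C_{[\mathbf{0},\mathbf{M}]}$), and assumption (k1) together with the boundedness of $P_i$ in (P1) guarantees that the integral defining $Q_i(\xi)$ converges and is uniformly continuous in $\xi$. Once these routine checks are recorded, the lemma carries no further content: it is simply a reformulation of the defining inequalities \eqref{7}--\eqref{8} in terms of the operator $Q_i$, so I would expect the author's proof to be correspondingly short.
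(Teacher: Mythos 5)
Your proof is correct and is exactly the argument the paper has in mind: the paper itself states that the lemma ``is clear by Definition \ref{de2}'' and omits the proof, since $Q_i$ is literally the right-hand side of \eqref{7}--\eqref{8} and the lemma's hypothesis is the admissibility condition on the $\psi_{ij}$. Your additional remarks on uniform continuity and well-definedness of $Q_i$ via (k1) and (P1) match the paper's set-up in the paragraph preceding the lemma, so nothing is missing.
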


The lemma is clear by Definition \ref{de2}, and we omit the proof.

Let
\[
\Gamma =\{\Phi(\xi)\in C: \underline{\Phi}(\xi)\le \Phi(\xi)\le \overline{\Phi}(\xi),\xi\in\mathbb{R} \},
\]
and define
\[
F(\Phi)(\xi)=(F_1(\Phi)(\xi), F_2(\Phi)(\xi),  \cdots, F_m(\Phi)(\xi))
\]
by
\[
F_i(\Phi)(\xi)=\int_{\mathbb{R}}P_i[\phi_{1}(s-\tau c+c), \cdots, \phi_{1}(s), \phi_{2}(s-\tau c+c),\cdots, \phi_{m}(s)]k_i (\xi- s-c)ds
\]
with $\Phi=(\phi_1,\phi_2, \cdots, \phi_m)\in \Gamma.$

\begin{lemma}\label{le2}
Assume that  $\overline{\Phi}(\xi), \underline{\Phi}(\xi)$ are a pair of generalized upper and lower solutions of \eqref{4}.
Then $F: \Gamma  \to \Gamma $ is completely continuous with respect to the decay norm $|\cdot|_{\mu}$.
\end{lemma}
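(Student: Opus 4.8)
The plan is to verify the three defining properties of a completely continuous operator in turn: that $F$ maps $\Gamma$ into itself, that $F$ is continuous in the decay norm $|\cdot|_{\mu}$, and that $F(\Gamma)$ is precompact in $\left(B_{\mu}(\mathbb{R},\mathbb{R}^{m}),|\cdot|_{\mu}\right)$. Since the domain $\Gamma$ is itself bounded (each component lies in $[0,M_i]$, so $|\Phi|_{\mu}\le \|\mathbf{M}\|$), complete continuity reduces to continuity together with relative compactness of the single image set $F(\Gamma)$.

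First, the invariance $F(\Gamma)\subseteq \Gamma$ is essentially immediate from Lemma \ref{le1}. For $\Phi=(\phi_1,\dots,\phi_m)\in\Gamma$, the choice $\psi_{lj}=\phi_l$ in the definition of $Q_i$ makes $F_i(\Phi)$ coincide with $Q_i$, and since $\Phi\in\Gamma$ means $\underline{\phi}_l\le\phi_l\le\overline{\phi}_l$, Lemma \ref{le1} yields $\underline{\phi}_i(\xi)\le F_i(\Phi)(\xi)\le\overline{\phi}_i(\xi)$ for all $i\in I$. Combined with the uniform continuity and boundedness of each $F_i(\Phi)$ (noted after Definition \ref{de2} using (k1)), this gives $F(\Phi)\in\Gamma$.

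Second, for continuity I would establish the stronger Lipschitz estimate. Given $\Phi,\Psi\in\Gamma$, apply (P2) under the integral defining $F_i$ to bound $|F_i(\Phi)(\xi)-F_i(\Psi)(\xi)|$ by $L\int_{\mathbb{R}}\sum_{l\in I,\,j\in J}|\phi_l(s+(j-\tau)c)-\psi_l(s+(j-\tau)c)|\,k_i(\xi-s-c)\,ds$. Using $|\phi_l(\eta)-\psi_l(\eta)|\le|\Phi-\Psi|_{\mu}\,e^{\mu|\eta|}$, the substitution $r=\xi-s-c$, and the elementary bound $|\eta|\le|\xi|+|r|+(\tau+1)c$ for the arguments $\eta=\xi-r+(j-\tau-1)c$, the factor $e^{\mu|\xi|}$ splits off and the remaining integral $\int_{\mathbb{R}}e^{\mu|r|}k_i(r)\,dr$ is finite by (k2)--(k3). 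Hence $e^{-\mu|\xi|}|F_i(\Phi)(\xi)-F_i(\Psi)(\xi)|\le C|\Phi-\Psi|_{\mu}$ with $C$ independent of $\xi$ and $i$, so that $|F(\Phi)-F(\Psi)|_{\mu}\le C|\Phi-\Psi|_{\mu}$.

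The main work, and the main obstacle, is the precompactness of $F(\Gamma)$ in the decay norm, since $\Gamma$ is far from compact in the supremum norm. I would first show that $F(\Gamma)$ is equicontinuous: writing $|F_i(\Phi)(\xi_1)-F_i(\Phi)(\xi_2)|\le M_i\int_{\mathbb{R}}|k_i(r+\xi_1-\xi_2)-k_i(r)|\,dr$ (using $0\le P_i\le M_i$ and a translation of the integration variable), the right-hand side is independent of $\Phi$ and tends to $0$ as $\xi_1\to\xi_2$ by the $L^1$-continuity of translation for $k_i\in L^1$ guaranteed by (k1). As $F(\Gamma)\subseteq C_{[\mathbf{0},\mathbf{M}]}$ is also uniformly bounded, the Arzel\`a--Ascoli theorem together with a diagonal argument over the intervals $[-n,n]$ shows that any sequence in $F(\Gamma)$ has a subsequence converging uniformly on every compact set. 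Finally, I would upgrade this to convergence in $|\cdot|_{\mu}$: for any $\varepsilon>0$, the weight $e^{-\mu|\xi|}$ together with the uniform bound $\|\mathbf{M}\|$ makes the tail $\{|\xi|>N\}$ contribute less than $\varepsilon$ once $N$ is large, while uniform convergence controls the compact part $[-N,N]$. This transfer of the tail estimate onto the exponential weight is exactly what makes the decay norm the natural setting here, and it is the step requiring the most care. Combining the three parts yields that $F$ is completely continuous.
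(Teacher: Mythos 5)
Your proof is correct and follows essentially the same route as the paper: invariance of $\Gamma$ via Lemma \ref{le1}, a Lipschitz estimate in $|\cdot|_{\mu}$ obtained from (P2) together with the finiteness of $\int_{\mathbb{R}}e^{\mu|y|}k_i(y)\,dy$, and compactness from equicontinuity (translation continuity in $L^1$) plus the exponential weight killing the tails. The only cosmetic difference is that you phrase the compactness step as sequential compactness via Arzel\`a--Ascoli and a diagonal argument, while the paper builds a finite $\epsilon$-net on $|\xi|\le B$ and extends it to $\mathbb{R}$ using the tail bound; these are equivalent formulations of the same argument.
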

\begin{proof}
According to Lemma \ref{le1}, $F: \Gamma \to \Gamma $ is true. We now prove that the mapping is completely continuous. Let
\[
\Phi =(\phi _{1},\phi _{2},\cdots ,\phi _{m})\in \Gamma ,\Psi =(\psi
_{1},\psi _{2},\cdots ,\psi _{m})\in \Gamma ,
\]
then (P2) implies that
\begin{eqnarray*}
&&\left\vert F_{i}(\Phi )(\xi )-F_{i}(\Psi )(\xi )\right\vert  \\
&=&\left\vert \int_{\mathbb{R}}P_{i}[\phi _{1}(s-\tau c+c),\cdots ,\phi
_{1}(s),\phi _{2}(s-\tau c+c),\cdots ,\phi _{m}(s)]k_{i}(\xi -s-c)ds\right.  \\
&&\left. -\int_{\mathbb{R}}P_{i}[\psi _{1}(s-\tau c+c),\cdots ,\psi _{1}(s),\psi
_{2}(s-\tau c+c),\cdots ,\psi _{m}(s)]k_{i}(\xi -s-c)ds\right\vert  \\
&\leq &L\sum_{t\in I,j\in J}\int_{\mathbb{R}}\left\vert \phi
_{t}(s-jc+c)-\psi _{t}(s-jc+c)\right\vert k_{i}(\xi -s-c)ds
\end{eqnarray*}%
and
\begin{eqnarray*}
&&\left\vert F_{i}(\Phi )(\xi )-F_{i}(\Psi )(\xi )\right\vert e^{-\mu |\xi |}
\\
&=&e^{-\mu |\xi |}\left\vert \int_{\mathbb{R}}P_{i}[\phi _{1}(s-\tau
c+c),\cdots ,\phi _{1}(s),\phi _{2}(s-\tau c+c),\cdots ,\phi
_{m}(s)]k_{i}(\xi -s-c)ds\right.  \\
&&\left. -\int_{\mathbb{R}}P_{i}[\psi _{1}(s-\tau c+c),\cdots ,\psi
_{1}(s),\psi _{2}(s-\tau c+c),\cdots ,\psi _{m}(s)]k_{i}(\xi
-s-c)ds\right\vert  \\
&\leq &Le^{-\mu |\xi |}\sum_{t\in I,j\in J}\int_{\mathbb{R}}\left\vert \phi
_{t}(s-jc+c)-\psi _{t}(s-jc+c)\right\vert k_{i}(\xi -s-c)ds.
\end{eqnarray*}%
Note that%
\begin{eqnarray*}
&&e^{-\mu |\xi |}\int_{\mathbb{R}}\left\vert \phi _{t}(s-jc+c)-\psi
_{t}(s-jc+c)\right\vert k_{i}(\xi -s-c)ds \\
&=&e^{-\mu |\xi |}\int_{\mathbb{R}}\left\vert \phi _{t}(s-jc+c)-\psi
_{t}(s-jc+c)\right\vert e^{-\mu \left\vert s-jc+c\right\vert }e^{\mu
\left\vert s-jc+c\right\vert }k_{i}(\xi -s-c)ds \\
&\leq &\left\vert \Phi -\Psi \right\vert _{\mu }\int_{\mathbb{R}}e^{-\mu
|\xi |}e^{\mu \left\vert s-jc+c\right\vert }k_{i}(\xi -s-c)ds \\
&=&\left\vert \Phi -\Psi \right\vert _{\mu }\int_{\mathbb{R}}e^{-\mu |\xi
|}e^{\mu \left\vert s-jc+c\right\vert }e^{-\mu \left\vert \xi
-s-c\right\vert }e^{\mu \left\vert \xi -s-c\right\vert }k_{i}(\xi -s-c)ds \\
&\leq &\left\vert \Phi -\Psi \right\vert _{\mu }e^{\mu jc}\int_{\mathbb{R}%
}e^{\mu \left\vert \xi -s-c\right\vert }k_{i}(\xi -s-c)ds \\
&=&\left\vert \Phi -\Psi \right\vert _{\mu }e^{\mu jc}\int_{\mathbb{R}%
}e^{\mu |\xi |}k_{i}(\xi )d\xi
\end{eqnarray*}%
because 
\[-\mu |\xi |+\mu \left\vert s-jc+c\right\vert -\mu \left\vert \xi
-s-c\right\vert \leq -\mu \left\vert s+c\right\vert +\mu \left\vert
s-jc+c\right\vert \leq \mu jc.\]
Then%
\begin{eqnarray*}
\left\vert F_{i}(\Phi )(\xi )-F_{i}(\Psi )(\xi )\right\vert e^{-\mu |\xi |}
&\leq &Lm\left\vert \Phi -\Psi \right\vert _{\mu }\sum_{j\in J}\int_{\mathbb{%
R}}e^{\mu jc}e^{\mu |\xi |}k_{i}(\xi )d\xi  \\
&\leq &Lm\tau e^{\mu \tau c}\left\vert \Phi -\Psi \right\vert _{\mu }\int_{%
\mathbb{R}}e^{\mu |\xi |}k_{i}(\xi )d\xi,
\end{eqnarray*}
which further implies that%
\[
\sup_{\xi \in \mathbb{R}}\left\vert F_{i}(\Phi )(\xi )-F_{i}(\Psi )(\xi
)\right\vert e^{-\mu |\xi |}\leq Lm \tau e^{\mu \tau c}\left\vert \Phi -\Psi \right\vert _{\mu }\int_{\mathbb{R%
}}e^{\mu |\xi | }k_{i}(\xi )d\xi
\]%
and the continuous is proved.

Moreover, if $\Phi\in \Gamma,$ then
\begin{eqnarray*}
\left\vert F_{i}(\Phi )(\xi_1 )-F_{i}(\Phi )(\xi _2)\right\vert \le M_i \int_{\mathbb{R}}\left\vert  k_{i}(\xi_1)-k_{i}(\xi_2)\right\vert ds
\end{eqnarray*}
and the equicontinuous is true by (k1). For any $\epsilon >0,$ let $B>0$ such that
\begin{equation}\label{net}
\sum_{i\in I}M_ie^{-\mu |\xi |} < \epsilon, |\xi| > B.
\end{equation}
Due to the equicontinuous, then Ascoli-Arzela lemma implies that there exist $T\in \mathbb{N}$ and
\[
\{\Phi^{i} (\xi)\}_{i=1}^{T} \subset  \{ F(\Phi)(\xi): \Phi(\xi)\in \Gamma\}
\]
such that $\{\Phi^{i} (\xi)\}_{i=1}^{T}$ is a finite $\epsilon-$net of $\{ F(\Phi)(\xi): \Phi(\xi)\in \Gamma\}$ when $|\xi|\le B$. From \eqref{net}, $\{\Phi^{i} (\xi)\}_{i=1}^{T}$ is a finite $\epsilon-$net of $\{ F(\Phi)(\xi): \Phi(\xi)\in \Gamma\}$ for $\xi\in \mathbb{R}$, which implies the compactness.
 The proof is complete.
\end{proof}

\begin{theorem}\label{th1}
Assume that  $\overline{\Phi}(\xi), \underline{\Phi}(\xi)$ are a pair of generalized upper and lower solutions of \eqref{4}.
Then \eqref{4} admits a positive solution $\Phi(\xi)$ such that
\[
\underline{\Phi}(\xi)\le \Phi (\xi)\le \overline{\Phi}(\xi).
\]
\end{theorem}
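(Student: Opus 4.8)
The plan is to realize the desired solution as a fixed point of the operator $F$ via Schauder's fixed point theorem, since by the very definition of $F$ a profile $\Phi$ with $\Phi=F(\Phi)$ is exactly a solution of \eqref{4}. The analytic heart of the matter — the complete continuity of $F$ in the decay norm $|\cdot|_{\mu}$ — has already been supplied by Lemma \ref{le2}, so the remaining work is to confirm that $\Gamma$ is an admissible domain for Schauder's theorem and then to extract the positivity asserted in the statement.

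First I would check that $\Gamma$ is a nonempty, bounded, closed and convex subset of the Banach space $\left(B_{\mu}(\mathbb{R},\mathbb{R}^{m}),|\cdot|_{\mu}\right)$. Nonemptiness follows from the Remark after Definition \ref{de2}, which gives $\underline{\Phi}\le\overline{\Phi}$ and hence $\underline{\Phi},\overline{\Phi}\in\Gamma$. Boundedness in $B_{\mu}$ is immediate: each $\Phi\in\Gamma$ satisfies $\mathbf{0}\le\Phi(\xi)\le\mathbf{M}$, so $\|\Phi(\xi)\|\le\|\mathbf{M}\|$ and therefore $|\Phi|_{\mu}\le\|\mathbf{M}\|$. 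Convexity is clear, since the defining inequalities $\underline{\Phi}\le\Phi\le\overline{\Phi}$ are preserved under convex combinations. For closedness I would use that convergence in $|\cdot|_{\mu}$ forces pointwise convergence, so the order relations $\underline{\Phi}(\xi)\le\Phi(\xi)\le\overline{\Phi}(\xi)$ pass to any limit. Granting these facts, Lemma \ref{le2} tells us that $F:\Gamma\to\Gamma$ is continuous with $F(\Gamma)$ relatively compact, and Schauder's fixed point theorem produces $\Phi\in\Gamma$ with $\Phi=F(\Phi)$, i.e. a solution of \eqref{4} with $\underline{\Phi}\le\Phi\le\overline{\Phi}$.

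It remains to argue positivity, which is the delicate point, because by (P3) the trivial profile $\mathbf{0}$ is itself a fixed point of $F$ and must be ruled out. Here the lower solution does the work: since $\underline{\Phi}$ is a nontrivial element of $C_{[\mathbf{0},\mathbf{M}]}$ — in line with the asymptotics \eqref{5} or \eqref{500} one imposes on the profile — we have $\underline{\Phi}\not\equiv\mathbf{0}$, so $\mathbf{0}\notin\Gamma$ and the fixed point is automatically nontrivial and nonnegative. To propagate this into strict componentwise positivity I would substitute $\Phi$ back into \eqref{4}: the integrand $P_{i}[\cdots]\,k_{i}(\xi-s-c)$ is nonnegative by (P1) and (k1), and $k_{i}$ carries unit mass by (k3); since $\Phi\ge\underline{\Phi}$ keeps the relevant arguments $\phi_{t}$ positive on a set of positive measure, the convolution $\phi_{i}(\xi)$ stays positive and cannot collapse to the trivial state. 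Thus the main obstacle is not the fixed point argument — Lemma \ref{le2} has done the heavy lifting — but the bookkeeping needed to verify that $\Gamma$ is genuinely a closed convex set in the decay norm and that the nontrivial lower solution drives the produced fixed point away from $\mathbf{0}$.
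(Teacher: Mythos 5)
Your proposal is correct and follows essentially the same route as the paper: verify that $\Gamma$ is nonempty, convex, bounded and closed in $\left(B_{\mu}(\mathbb{R},\mathbb{R}^{m}),|\cdot|_{\mu}\right)$, then apply Schauder's fixed point theorem together with Lemma \ref{le2} to obtain a fixed point of $F$ lying between $\underline{\Phi}$ and $\overline{\Phi}$. The paper's own proof stops at the fixed point and does not discuss positivity (strict positivity is only derived later, in the applications, from the specific structure of each nonlinearity and a nontrivial lower solution), so your added positivity remarks are extra care rather than a deviation in method.
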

\begin{proof}
Since $\overline{\Phi}(\xi), \underline{\Phi}(\xi)$ are a pair of  generalized upper and lower solutions of \eqref{4}, then $\Gamma$ is nonempty and convex. Moreover, it is bounded and closed with respect to the decay norm $|\cdot|_{\mu}.$ By Schauder fixed point theorem and Lemma \ref{le2}, $F$ has a fixed point
\[
({\phi}^*_1(\xi), {\phi}^*_2(\xi),  \cdots, {\phi}^*_m(\xi))\in \Gamma,
\]
which is a  solution of \eqref{4}.
The proof is complete.
\end{proof}

\section{Asymptotic Behavior via Contracting Rectangles}
\noindent

In this section, we investigate the asymptotic behavior of traveling wave solutions. For the purpose, we first study the long time behavior of the following difference system
\begin{equation}\label{diff}
u_{n+1}^i=P_i[u_{n-\tau+1}^1, \cdots, u_{n}^1, u_{n-\tau+1}^2,\cdots, u_{n}^m],
\end{equation}
in which $P_i$ satisfies (P1)-(P4) for $i\in I$.
\begin{definition}\label{squ}
{\rm
Let $\textbf{R}(s),\textbf{T}(s)$ be two vector functions of $s\in [0,1]$ and take the form as follows
\[
\textbf{R}(s)=(r_1(s),r_2(s),\cdots, r_m(s))\in \mathbb{R}^m, \,\, \textbf{T}(s)=(t_1(s),t_2(s),\cdots, t_m(s))\in \mathbb{R}^m.
\]
Then $[\textbf{R}(s), \textbf{T}(s)]$ is a contracting rectangle of \eqref{diff} if
\begin{description}
\item[(C1)] $r_i(s),t_i(s)$ are continuous in $s\in [0,1],i\in I;$
\item[(C2)] $r_i(s)$ is strictly increasing in $s$ while $t_i(s)$ is strictly decreasing in $s\in [0,1],i\in I$;
\item[(C3)] $\textbf{0}\le \textbf{R}(0)< \textbf{R}(1)=\textbf{E}= \textbf{T}(1)< \textbf{T}(0)\le \textbf{M};$
\item[(C4)] for each $s\in (0,1)$ and $i, j \in I, l\in J,$
\[
r_i(s)< P_i[u_{1}^1, \cdots, u_{\tau}^1, u_{1}^2,\cdots, u_{\tau}^m]< t_i(s)
\]
for any $u^{j}_{l}\in [r_j(s),t_j(s)].$
\end{description}
}
\end{definition}

\begin{theorem}\label{th2}
Assume that $[\textbf{R}(s), \textbf{T}(s)]$ is a contracting rectangle of \eqref{diff} and there exists $s_0\in (0,1)$ such that
\begin{equation}\label{re}
r_i (s_0)\le u^i_{n'-j}\le t_i(s_0)
\end{equation}
for some $n'\in \mathbb{N}$ and all $i\in I, j\in J.$ Then
$\lim_{n\to + \infty} u^i_n= E_i, i\in I.$
\end{theorem}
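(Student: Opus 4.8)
The plan is to prove the theorem in two stages: an \emph{invariance} step showing the orbit is trapped inside the closed rectangle $[\textbf{R}(s_0),\textbf{T}(s_0)]$ from time $n'$ onward, followed by a \emph{contraction} step squeezing the orbit into $[\textbf{R}(s),\textbf{T}(s)]$ for every $s<1$, so that letting $s\to 1^-$ and invoking (C3) forces $u^i_n\to E_i$. For the invariance step, note that hypothesis \eqref{re} supplies $\tau$ consecutive slices $u^l_{n'-1},\dots,u^l_{n'-\tau}$ lying in $[r_l(s_0),t_l(s_0)]$, and these are exactly the arguments needed to evaluate $u^i_{n'}$ via \eqref{diff}. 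By (C4) we get $u^i_{n'}\in(r_i(s_0),t_i(s_0))$; feeding this back and inducting on $n$ yields $u^i_n\in[r_i(s_0),t_i(s_0)]$ for all $n\ge n'-\tau$ and all $i\in I$. In particular the orbit is bounded, so $\underline u^i:=\liminf_{n\to\infty}u^i_n$ and $\overline u^i:=\limsup_{n\to\infty}u^i_n$ are well defined and satisfy $r_i(s_0)\le\underline u^i\le\overline u^i\le t_i(s_0)$.

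For the contraction step I would run a supremum argument. Define $\sigma=\sup\{s\in[s_0,1]:\ r_i(s)\le\underline u^i\ \text{and}\ \overline u^i\le t_i(s)\ \text{for all }i\in I\}$. Because each $r_i$ is increasing and each $t_i$ is decreasing, the defining set is downward closed in $[s_0,1]$ and contains $s_0$, and by the continuity (C1) the constraints persist at $s=\sigma$, i.e. $r_i(\sigma)\le\underline u^i\le\overline u^i\le t_i(\sigma)$. If $\sigma=1$, then (C3) gives $E_i=r_i(1)\le\underline u^i\le\overline u^i\le t_i(1)=E_i$, whence $u^i_n\to E_i$ and we are done. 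The substance of the proof is therefore to rule out $\sigma<1$.

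Assume $\sigma<1$. The product box $\prod_{l\in I,\,p\in J}[r_l(\sigma),t_l(\sigma)]$ is compact and each $P_i$ is continuous (indeed Lipschitz by (P2)), so the strict pointwise inequalities in (C4) upgrade to a single uniform gap: there exists $\delta>0$ with $r_i(\sigma)+\delta\le P_i[\,\cdot\,]\le t_i(\sigma)-\delta$ for all arguments in that box and all $i\in I$. Fix $\epsilon>0$ with $Lm\tau\epsilon\le\delta/2$. For all large $n$, every slice obeys $u^l_k\in[r_l(\sigma)-\epsilon,t_l(\sigma)+\epsilon]\cap[0,M_l]$; clamping each argument into $[r_l(\sigma),t_l(\sigma)]$ alters it by at most $\epsilon$, so (P2) together with the uniform gap gives $u^i_{n+1}=P_i[\cdots]\in[r_i(\sigma)+\delta/2,\,t_i(\sigma)-\delta/2]$. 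Passing to $\liminf$ and $\limsup$ yields $\underline u^i\ge r_i(\sigma)+\delta/2$ and $\overline u^i\le t_i(\sigma)-\delta/2$, both strict improvements over the boundary. By continuity of $r_i,t_i$ there is then some $\sigma'>\sigma$ still satisfying $r_i(\sigma')\le\underline u^i$ and $\overline u^i\le t_i(\sigma')$ for every $i$, contradicting the maximality of $\sigma$. Hence $\sigma=1$.

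I expect the main obstacle to be precisely this crux: converting the pointwise strict inequalities of (C4) into one uniform margin $\delta$ (via compactness of the box and continuity of $P_i$) and then verifying that this margin survives the fact that the orbit lies only \emph{approximately} in $[\textbf{R}(\sigma),\textbf{T}(\sigma)]$ for finite $n$. This is exactly where the Lipschitz bound (P2) and the clamping estimate $Lm\tau\epsilon\le\delta/2$ enter, and where the delay structure must be tracked carefully, since the $m\tau$ arguments of $P_i$ are spread over the times $n-\tau+1,\dots,n$, so ``$n$ large'' has to be applied uniformly to every entry at once.
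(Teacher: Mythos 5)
Your proposal is correct, and its overall skeleton matches the paper's: trap the orbit in $[\textbf{R}(s_0),\textbf{T}(s_0)]$ by induction and (C4), pass to $\underline{u}^i=\liminf_{n\to\infty}u^i_n$ and $\overline{u}^i=\limsup_{n\to\infty}u^i_n$, push the parameter $s$ to an extremal value, and use the strictness in (C4) to force a contradiction. Where you genuinely differ is in how the contradiction is extracted at the extremal parameter. The paper works at its extremal $s_1\in[s_0,1)$ where some constraint is tight, say $\underline{u}_1=r_1(s_1)$; it then (implicitly, via boundedness and Bolzano--Weierstrass) takes a subsequence along which $u^1_n$ converges to its $\liminf$, extracts convergent subsequences of the $m\tau$ delayed arguments, whose limits lie in the closed box at $s_1$, and uses the continuity of $P_1$ coming from (P2) to conclude that $r_1(s_1)=P_1[u^1_1,\cdots,u^m_\tau]$ with all arguments in $[r_l(s_1),t_l(s_1)]$ --- contradicting the strict inequality in (C4) directly. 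You never realize this exact attainment: instead you upgrade the pointwise strict inequalities of (C4) to a uniform margin $\delta>0$ by compactness of the box and continuity of $P_i$ (extreme value theorem), and then use (P2) as a quantitative Lipschitz bound to show the margin survives clamping the only-approximately-confined orbit into the box, yielding $\underline{u}^i\ge r_i(\sigma)+\delta/2$ and $\overline{u}^i\le t_i(\sigma)-\delta/2$ and hence contradicting the maximality of $\sigma$. Both mechanisms are sound and both ultimately rest on compactness; the paper's subsequence argument is shorter and needs only continuity of $P_i$, while yours is more quantitative --- it produces an explicit strict improvement of the rectangle, makes the ``strictly inside, hence $\sigma$ can be increased'' step fully rigorous without extracting limits of the delayed arguments, and is the kind of perturbation-stable argument that adapts readily when exact limits are awkward to obtain. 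One small remark: your uniform-gap step needs $\sigma\in(0,1)$ for (C4) to apply, which indeed holds since $\sigma\ge s_0>0$ and you have assumed $\sigma<1$, but this is worth stating explicitly.
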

\begin{proof}
By the definition of contracting rectangle, \eqref{re} implies that
\[
r_i (s_0)\le u^i_{n}\le t_i(s_0), n\ge n'.
\]
Define $\overline{u}_i, \underline{u}_i$ by
\[
\liminf_{n\to + \infty} u_n^i= \underline{u}_i, \limsup_{n\to + \infty} u_n^i= \overline{u}_i, i\in I.
\]
Were the statement false, then there exist $s_1\in [s_0,1)$ and $i_0\in I$ such that
\[
\underline{u}_{i_0}=r_{i_0}(s_1)\text{ or }\overline{u}_{i_0}=t_{i_0} (s_1)
\]
and
\[
r_i(s_1)\le \underline{u}_i \le \overline{u}_i \le t_i(s_1), i\in I.
\]
Without loss of generality, we assume that $\underline{u}_1=r_1(s_1).$ By the definition of $\liminf$ and (P2),  we obtain
\[
r_1(s_1)=P_1 [u_{1}^1, \cdots, u_{\tau}^1, u_{1}^2,\cdots, u_{\tau}^m]
\]
with some $u^{l}_{j}\in [r_l(s_1),t_l(s_1)],l\in I, j\in J,$ which contradicts the definition of contracting rectangle. The proof is complete.
\end{proof}

We now give the main result of this section.
\begin{theorem}\label{th3}
Assume that $\Phi  =(\phi_1,\cdots, \phi_m)$ is a positive solution of \eqref{4} and $[\textbf{R}(s),\textbf{T}(s)]$ is a contracting rectangle of \eqref{diff}. If
\[
\textbf{R}(s_1)\le \liminf_{\xi\to + \infty}\Phi (\xi)\le \limsup_{\xi\to + \infty}\Phi (\xi)\le \textbf{T}(s_1)
\]
with some $s_1 \in (0,1),$ then $\lim_{\xi\to + \infty} \Phi(\xi)= \textbf{E}.$
\end{theorem}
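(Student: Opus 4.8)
The plan is to mirror the contracting-rectangle argument of Theorem \ref{th2}, but transported to the continuous variable $\xi$ and fed through the integral representation \eqref{4}. Write $\underline{\phi}_i=\liminf_{\xi\to+\infty}\phi_i(\xi)$ and $\overline{\phi}_i=\limsup_{\xi\to+\infty}\phi_i(\xi)$, and set $\underline{\Phi}=(\underline{\phi}_1,\dots,\underline{\phi}_m)$, $\overline{\Phi}=(\overline{\phi}_1,\dots,\overline{\phi}_m)$. I would study the set
\[
S=\{s\in[s_1,1]:\mathbf{R}(s)\le\underline{\Phi}\le\overline{\Phi}\le\mathbf{T}(s)\}.
\]
By hypothesis $s_1\in S$, and (C1) makes each defining inequality closed, so $S$ is closed and $s^*:=\sup S\in S$. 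Since $\mathbf{R}(1)=\mathbf{E}=\mathbf{T}(1)$ by (C3), the equality $s^*=1$ would force $\underline{\Phi}=\overline{\Phi}=\mathbf{E}$, which is exactly the desired limit; hence it suffices to rule out $s^*<1$, and I argue by contradiction under this assumption.

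First I would use maximality of $s^*$ to pin an equality to the boundary of the rectangle. If we had $r_i(s^*)<\underline{\phi}_i$ and $\overline{\phi}_i<t_i(s^*)$ for every $i\in I$, then continuity (C1) and strict monotonicity (C2) would let us increase $s^*$ slightly while preserving $\mathbf{R}(s)\le\underline{\Phi}\le\overline{\Phi}\le\mathbf{T}(s)$, contradicting $s^*=\sup S$. So there is an index $i_0$ with $\overline{\phi}_{i_0}=t_{i_0}(s^*)$ or $\underline{\phi}_{i_0}=r_{i_0}(s^*)$; by symmetry I treat the first case. Since $s^*\in(0,1)$ and the admissible arguments range over the compact rectangle $[\mathbf{R}(s^*),\mathbf{T}(s^*)]$, condition (C4) together with the Lipschitz bound (P2) yields a uniform gap $\beta:=\max P_{i_0}[\mathbf{u}]<t_{i_0}(s^*)$, the maximum being taken over all arguments in that rectangle. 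Moreover, by (P2) this bound degrades only linearly when the arguments are allowed to wander within $\varepsilon$ of the rectangle, giving the value at most $\beta+Lm\tau\varepsilon$. I would then fix $\varepsilon>0$ so small that $\beta+Lm\tau\varepsilon<t_{i_0}(s^*)$, and, using $\int_{\mathbb{R}}k_{i_0}=1$ from (k3), fix $N>0$ with $\int_{|s|>N}k_{i_0}(s)\,ds$ as small as needed.

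The final step evaluates \eqref{4} along a sequence $\xi_n\to+\infty$ with $\phi_{i_0}(\xi_n)\to\overline{\phi}_{i_0}=t_{i_0}(s^*)$, splitting the integral at $|s|=N$. Because $s^*\in S$ controls $\liminf$ and $\limsup$, there is a threshold $X_\varepsilon$ with $\phi_t(\eta)\in[r_t(s^*)-\varepsilon,\,t_t(s^*)+\varepsilon]$ for all $\eta\ge X_\varepsilon$ and $t\in I$; then for $n$ large and every $s\in[-N,N]$, $j\in J$ one has $\xi_n-s-jc\ge X_\varepsilon$, so on $|s|\le N$ the integrand is at most $\beta+Lm\tau\varepsilon$, while on $|s|>N$ it is bounded crudely by $M_{i_0}$ via (P1). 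Letting $n\to+\infty$ gives $\overline{\phi}_{i_0}\le(\beta+Lm\tau\varepsilon)+M_{i_0}\int_{|s|>N}k_{i_0}(s)\,ds<t_{i_0}(s^*)$, contradicting $\overline{\phi}_{i_0}=t_{i_0}(s^*)$; the liminf case is identical with the inequalities reversed. I expect this last estimate to be the main obstacle: the strict inequality (C4) lives only on the closed rectangle, so one must absorb both the $\varepsilon$-thickening (controlled by (P2)) and the kernel tail (controlled by (P1) and the unit mass in (k3)) without dissipating the strict gap $\beta<t_{i_0}(s^*)$.
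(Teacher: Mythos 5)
Your proposal is correct, and its skeleton---argue by contradiction at the largest $s$ whose rectangle still contains the limit set, then let (C4) produce the contradiction---is the same as the paper's, which in turn mirrors Theorem \ref{th2}. The analytic core, however, is genuinely different. The paper takes a sequence $\xi_t\to+\infty$ realizing the extremal value, lets $t\to+\infty$ inside the operator $F$ by the dominated convergence theorem, and then uses continuity of $P_1$ on the compact rectangle (an intermediate-value/mean-value property of the integral against the unit-mass kernel) to write $\underline{\phi}_1=P_1[u_1^1,\cdots,u_\tau^m]$ with all arguments in $[r_l(s_2),t_l(s_2)]$, contradicting (C4) at a single point. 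Strictly speaking, that dominated-convergence step presupposes pointwise (a.e.\ in $s$) convergence of the translates $\phi_i(\xi_t-s-jc)$, which needs an implicit Ascoli--Arzel\`a extraction of a limit profile that the paper leaves unstated. Your argument never builds a limit profile: compactness of the rectangle upgrades the strict inequality (C4) to a uniform gap $\beta<t_{i_0}(s^*)$, the Lipschitz condition (P2) shows the gap survives an $\varepsilon$-thickening of the rectangle, and (P1) together with the unit mass in (k3) controls the kernel tail, so evaluating \eqref{4} along the extremal sequence yields a quantitative contradiction with no interchange of limits at all. (Both proofs, yours and the paper's, implicitly use that the positive solution takes values in $[\mathbf{0},\mathbf{M}]$ so that (P1)--(P2) apply; that is an assumption inherited from Theorem \ref{th1} rather than a gap.) What your route costs is brevity; what it buys is that every limiting operation is replaced by a finite, explicit estimate, making the proof self-contained and in this respect tighter than the paper's own.
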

\begin{proof}
The proof is similar to that of Theorem \ref{th2}. Were the statement false, let
\[
\liminf_{\xi\to + \infty}\phi_i(\xi)=\underline{\phi}_i , \limsup_{\xi\to + \infty}\phi_i(\xi)=\overline{\phi}_i, i\in I.
\]
Without loss of generality, we assume that there exists $s_2\in [s_1,1)$ such that
\[
\underline{\phi}_1=r_1(s_2)
\]
and
\[
r_i(s_2)\le \underline{\phi}_i , \overline{\phi}_i\le t_i(s_2), i\in I.
\]
By $\liminf,$ there exists $\{\xi_t\}_{ t\in \mathbb{N}}$ with $\xi_t\to + \infty, t\to + \infty$ such that
\[
\phi_1(\xi_t)\to \underline{\phi}_1=r_1(s_2), t\to + \infty.
\]
From the dominated convergence theorem (let $t\to + \infty$ in $F$) and properties of continuous functions on bounded closed interval (see (P1)-(P2)), we obtain
\[
\underline{\phi}_1=P_1 [u_{1}^1, \cdots, u_{\tau}^1, u_{1}^2,\cdots, u_{\tau}^m]
\]
with some $u^{l}_{j}\in [r_l(s_2),t_l(s_2)],l\in I, j\in J.$ By the definition of contracting rectangles, we obtain a contradiction and complete the proof.
\end{proof}

\section{Applications}
\noindent

In this section, we shall study the traveling wave solutions of several models. To use our conclusions, we first introduce some classical theory of asymptotic spreading. Consider
\begin{equation}\label{as}
\begin{cases}
v_{n+1}(x)=\int_{\mathbb{R}} b(v_n(y))k(x-y)dy,x\in \mathbb{R}, n=0,1,2, \cdots,\\
v_0(x)=\varphi (x), x\in \mathbb{R},
\end{cases}
\end{equation}
in which $k$ satisfies (k1)-(k3) and
\begin{description}
\item[(b1)] $b(0)=0, b(v^*)=v^*$ for some $v^* >0$, and there exists $\overline{v}\ge v^*$ such that $[0, \overline{v}]$ is an invariant region of $v_{n+1}=b(v_n)$;
\item[(b2)] $b(v)$ is continuous in $v\in [0,\overline{v}]$ and $\lim_{v\to 0+} \frac{b(v)}{v}=b'(0)>1;$
\item[(b3)] $b(v) >v, v\in (0, v^*);$
\item[(b4)] $0< b(v)<  b'(0)v, v\in (0,\overline{v}];$
\item[(b5)] there exists $L_1>0$ such that
$
0\le  b'(0)v- b(v)\le  L_1 v^2, v\in [0, \overline{v}].
$
\end{description}
Moreover, from (b1)-(b5), we  define
\[
\overline{b}(v)=\sup_{u\in (0,v)}b(u), \underline{b}(v)=\inf_{u\in (v, \overline{v})}b(u), v\in [0, \overline{v}].
\]
Then both $\overline{b}(v)$ and $\underline{b}(v)$ are continuous and nondecreasing for $v\in [0, \overline{v}]$ and there exist $v_1,v_2$ such that $0< v_1\le v_2 \le \overline{v}$ and
\[
\overline{b}(v_2)=v_2, \underline{b}(v_1) =v_1.
\]

By Hsu and Zhao \cite{hsuzhao}, the following result is true.
\begin{lemma}\label{le3}
Let $c_1=: \inf_{\lambda >0}\frac{\ln \left(b^{\prime }(0)\int_{\mathbb{R}}e^{\lambda y}k(y)dy\right)}{\lambda},$ then $c_1>0.$
Assume that $0< \varphi(x)\le \overline{v}$ holds and $\varphi(x)$ is uniformly  continuous in $x\in\mathbb{R}$. Then \eqref{as} is well defined for all $n\in \mathbb{N}, x\in\mathbb{R}$ such that
$
0\le v_n(x) \le \overline{v}, n\in \mathbb{N}, x\in\mathbb{R}
$
and
\[
v_1 \le \liminf_{n\to + \infty}\inf_{|x|<cn} v_n(x) \le \limsup_{n\to + \infty} \sup_{|x|<cn}v_n(x) \le v_2\text{ for any given } c\in (0,c_1).
\]
Moreover, if $b(v)$ is nondecreasing in $v \in [0, \overline{v}]$ and $u_n(x)$ satisfies
\begin{equation*}
\begin{cases}
u_{n+1}(x)\ge \int_{\mathbb{R}} b(u_n(y))k(x-y)dy,x\in \mathbb{R}, n=0,1,2, \cdots,\\
u_0(x)\ge \varphi (x), x\in \mathbb{R},\\
0\le u_n(x)\le \overline{v}, x\in \mathbb{R}, n=0,1,2, \cdots,
\end{cases}
\end{equation*}
then
$
u_n(x)\ge v_n(x),x\in \mathbb{R}, n=0,1,2, \cdots .
$
\end{lemma}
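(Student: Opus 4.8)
The plan is to establish the four assertions of the lemma in turn: the positivity of $c_1$, the well-definedness together with the a priori bound $0\le v_n(x)\le\overline{v}$, the two-sided spreading estimate, and finally the comparison principle under monotonicity of $b$.

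First I would prove $c_1>0$. Writing $\Lambda(\lambda)=\ln\left(b'(0)\int_{\mathbb{R}}e^{\lambda y}k(y)dy\right)$, condition (k3) makes $\Lambda$ finite and convex on $[0,\infty)$, while (k2) forces the first moment $\int_{\mathbb{R}}yk(y)dy=0$, so that $\Lambda'(0)=0$ and hence $\Lambda(\lambda)\ge\Lambda(0)=\ln b'(0)>0$ for every $\lambda\ge 0$ by (b2). Consequently $\Lambda(\lambda)/\lambda>0$ on $(0,\infty)$ and $\Lambda(\lambda)/\lambda\to+\infty$ as $\lambda\to0^+$ (the numerator tends to $\ln b'(0)>0$); since the quotient is continuous and does not tend to $0$ as $\lambda\to+\infty$, its infimum $c_1$ is attained at some finite $\lambda^{*}$ and is strictly positive. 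The a priori bound then follows by an easy induction: (b1) says $[0,\overline{v}]$ is invariant for $b$, so $0\le b(v_n(y))\le\overline{v}$, and because $k$ is a probability density by (k3) the convolution $v_{n+1}(x)=\int_{\mathbb{R}}b(v_n(y))k(x-y)dy$ again lies in $[0,\overline{v}]$, with uniform continuity preserved by (k1).

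Next I would dispose of the comparison principle (the \textbf{Moreover} part), which is the simplest ingredient. When $b$ is nondecreasing on $[0,\overline{v}]$, the operator $w\mapsto\int_{\mathbb{R}}b(w(y))k(x-y)dy$ is order preserving because $k\ge 0$. Starting from $u_0\ge\varphi=v_0$ and assuming $u_n\ge v_n$, the monotonicity of $b$ and positivity of $k$ give
\[
u_{n+1}(x)\ge\int_{\mathbb{R}}b(u_n(y))k(x-y)dy\ge\int_{\mathbb{R}}b(v_n(y))k(x-y)dy=v_{n+1}(x),
\]
so that $u_n\ge v_n$ for all $n$ by induction.

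The heart of the proof, and the main obstacle, is the two-sided spreading estimate, since $b$ itself need not be monotone. I would sandwich $b$ between its monotone envelopes $\underline{b}\le b\le\overline{b}$ defined before the lemma, both nondecreasing with $\overline{b}(v_2)=v_2$ and $\underline{b}(v_1)=v_1$, and compare $v_n$ with the solutions of the monotone equations driven by $\overline{b}$ and $\underline{b}$ via the comparison principle just proved. Invoking the established theory of asymptotic spreading for \emph{monotone} integro-difference equations (Weinberger, Hsu--Zhao \cite{wein,hsuzhao}), the common linear speed $c_1=\inf_{\lambda>0}\Lambda(\lambda)/\lambda$ governs the spread: for any $c\in(0,c_1)$ one obtains $\limsup_{n\to+\infty}\sup_{|x|<cn}v_n(x)\le v_2$ from the upper envelope and $\liminf_{n\to+\infty}\inf_{|x|<cn}v_n(x)\ge v_1$ from the lower one. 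The delicate points are that the two envelopes possess the \emph{same} spreading speed $c_1$ --- which is precisely why (b4) and the quadratic control (b5) are imposed, guaranteeing that the nonlinearity is a genuine perturbation of its linearization $b'(0)v$ near the origin and that linear determinacy holds --- and the explicit construction of exponentially decaying upper solutions together with compactly supported spreading lower solutions propagating at speed $c_1$; this is exactly where the positivity of $c_1$ and the finiteness of all exponential moments in (k3) are essential. Combining the two one-sided estimates yields the stated squeeze between $v_1$ and $v_2$.
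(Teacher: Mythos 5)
The paper offers no proof of this lemma at all: it is quoted as a known result of Hsu and Zhao \cite{hsuzhao}, and your sketch reconstructs precisely the argument behind that citation — invariance of $[0,\overline{v}]$ and the comparison principle by induction, then sandwiching the nonmonotone $b$ between its monotone envelopes $\underline{b}\le b\le\overline{b}$ and invoking the spreading-speed/linear-determinacy theory for monotone integro-difference recursions, with (b4)--(b5) ensuring both envelopes spread at the common linearized speed $c_1$. Your proposal is therefore correct and follows essentially the same route the paper relies on; the only quibble is that your claim that the infimum defining $c_1$ is \emph{attained} is neither needed nor automatic (what is needed, and what your convexity/Jensen argument together with the fact that $k$ has positive mass away from the origin does give, is simply $c_1>0$).
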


\subsection{A Scalar Equation}
\noindent

In this part, we consider the traveling wave solutions of \eqref{2} satisfying (b1)-(b5). 
Let $v_n(x)=\varphi(x+cn)$ be a traveling wave solution of \eqref{2}, then
\begin{equation}
\varphi (\xi )=\int_{\mathbb{R}}b(\varphi (\xi -c+y))k(y)dy,\xi \in \mathbb{R%
}.  \label{5.1}
\end{equation}%
To continue our discussion, we need some constants and define
\[
\Delta(\lambda ,c)=b^{\prime }(0)\int_{\mathbb{R}}e^{\lambda y-\lambda
c}k(y)dy
\]%
for $\lambda >0, c>0.$

\begin{lemma}
If $c>c_1,$ then $\Delta(\lambda, c)=1$ has two real
positive roots $\lambda_1^c< \lambda_2^c$ such that
$\Delta(\lambda, c)<1$ for all $\lambda \in (\lambda_1^c, \lambda_2^c).$
Moreover, if $c< c_1,$ then $\Delta(\lambda, c)>1$ for all $\lambda >0.$
\end{lemma}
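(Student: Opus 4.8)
The plan is to analyze the function $\Delta(\lambda,c)$ as a function of $\lambda$ for fixed $c$, exploiting its convexity. First I would fix $c>0$ and examine the behavior of $g(\lambda):=\Delta(\lambda,c)=b'(0)e^{-\lambda c}\int_{\mathbb{R}}e^{\lambda y}k(y)\,dy$ on $(0,\infty)$. By condition (k3), the integral $\int_{\mathbb{R}}e^{\lambda y}k(y)\,dy$ is finite for every $\lambda\ge 0$, so $g$ is well defined. The key structural fact is that $\ln g(\lambda)=\ln b'(0)-\lambda c+\ln\!\int_{\mathbb{R}}e^{\lambda y}k(y)\,dy$ is a convex function of $\lambda$: the first two terms are affine, and the last term is the logarithm of a moment generating function, which is convex (this is the standard cumulant-generating-function convexity, provable via H\"older's inequality). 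Consequently $g$ itself, being the exponential of a convex function, is strictly convex in $\lambda$ provided $k$ is not a point mass, which is guaranteed here since $k$ is a probability density.

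Next I would relate the definition of $c_1$ to the minimum of $g$. Writing $c_1=\inf_{\lambda>0}\frac{1}{\lambda}\ln\!\big(b'(0)\int_{\mathbb{R}}e^{\lambda y}k(y)\,dy\big)$, one sees that $c_1$ is precisely the value of $c$ at which the strictly convex curve $g(\cdot)$ is tangent to the horizontal line $1$ from above; equivalently, for the critical $c=c_1$ the equation $g(\lambda)=1$ has a unique (double) root $\lambda^*$ where $g'(\lambda^*)=0$. The positivity $c_1>0$ comes from (b2), namely $b'(0)>1$, together with (k2): symmetry of $k$ forces $\int_{\mathbb{R}}e^{\lambda y}k(y)\,dy\ge\int_{\mathbb{R}}k(y)\,dy=1$ by Jensen (the mean is zero), so the numerator $\ln(b'(0)\int e^{\lambda y}k)\ge\ln b'(0)>0$ for all $\lambda>0$, whence the infimum defining $c_1$ is of a strictly positive quantity and $c_1>0$.

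For the two stated regimes I would argue as follows. Since $\ln g$ is strictly convex in $\lambda$ with $g(\lambda)\to+\infty$ as $\lambda\to0^+$ (because $g(0^+)=b'(0)>1$, and more importantly the infimum is attained at an interior point for $c>0$ large enough), the graph of $g$ over $(0,\infty)$ is a strictly convex curve attaining a unique minimum value $m(c)=\min_{\lambda>0}g(\lambda)$. A direct computation shows $m(c)=e^{c_1\lambda^*(c)-c\lambda^*(c)}$ type behavior, but more cleanly: $c>c_1$ is equivalent to $m(c)<1$, and $c<c_1$ is equivalent to $m(c)>1$, by the defining relation for $c_1$ as an infimum. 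When $c>c_1$ so that $\min_\lambda g(\lambda)<1$, strict convexity and the fact that $g$ exceeds $1$ near both ends of $(0,\infty)$ force exactly two solutions $\lambda_1^c<\lambda_2^c$ of $g(\lambda)=1$, with $g(\lambda)<1$ strictly between them. When $c<c_1$ so that $\min_\lambda g(\lambda)>1$, the curve lies entirely above $1$, giving $g(\lambda)>1$ for all $\lambda>0$.

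The main obstacle I anticipate is making rigorous the passage from the infimum definition of $c_1$ to the claim that the minimum $m(c)$ is attained at an interior point and crosses the value $1$ exactly as $c$ crosses $c_1$; in particular one must verify that $g$ blows up at the right end $\lambda\to+\infty$ (so the minimum is interior) and handle differentiability of the minimizer $\lambda^*(c)$ in $c$. The blow-up at $+\infty$ needs care: it follows from $\int_{\mathbb{R}}e^{\lambda y}k(y)\,dy\ge e^{\lambda R}\int_{R}^{\infty}k(y)\,dy$ growing faster than $e^{\lambda c}$ for any fixed $c$, using that $k$ has mass arbitrarily far out on the positive axis, which is where the symmetry and positivity of $k$ are used again. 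Once the interior minimizer is established, the envelope/monotonicity relation $\frac{d}{dc}m(c)<0$ together with $m(c_1)=1$ closes the argument cleanly.
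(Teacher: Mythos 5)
Your overall framework---log-convexity of $\lambda\mapsto\ln\Delta(\lambda,c)$, the equivalence ``$c>c_1$ iff $\inf_{\lambda>0}\Delta(\lambda,c)<1$'', and reading the root structure off the convex graph---is exactly the route the paper gestures at (it omits the proof, citing ``properties of convex functions'' and Liang--Zhao), and the half of the lemma for $c<c_1$ is correct and needs no convexity at all: $c<c_1$ gives $\ln\bigl(b'(0)\int_{\mathbb{R}}e^{\lambda y}k(y)\,dy\bigr)\geq \lambda c_1>\lambda c$ for every $\lambda>0$, hence $\Delta(\lambda,c)>1$. The genuine gap is the step you yourself flagged as the main obstacle: the behavior of $\Delta(\lambda,c)$ as $\lambda\to+\infty$. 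You justify the blow-up by asserting that $\int_{R}^{\infty}k(y)\,dy>0$ for every $R$, i.e.\ that $k$ has mass arbitrarily far out on the positive axis, and you attribute this to the symmetry and positivity of $k$. Conditions (k1)--(k3) do not imply this: $k$ takes nonnegative values, and a compactly supported density such as $k=\frac12\mathbf{1}_{[-1,1]}$ satisfies (k1)--(k3). Such kernels are squarely inside the paper's scope---Section 5.2 stresses that earlier work needed Gaussian or compactly supported kernels, while here only (k1)--(k3) are assumed---so you may not exclude them.

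For such a kernel your argument collapses, and in fact so does the two-root conclusion itself, so no patch of the same outline can work in full generality. Take $k=\frac12\mathbf{1}_{[-1,1]}$ and $b'(0)=2$, so that $\Delta(\lambda,c)=2e^{-\lambda c}\sinh\lambda/\lambda$ and $c_1\leq\frac13\ln\bigl(2\sinh 3/3\bigr)\approx 0.63$. For $c=2>c_1$ one has $\frac{d}{d\lambda}\ln\Delta(\lambda,2)=\bigl(\coth\lambda-\tfrac{1}{\lambda}\bigr)-2\in(-2,-1)$ for all $\lambda>0$, so $\ln\Delta(\cdot,2)$ is convex, equals $\ln 2>0$ at $\lambda=0^+$, decreases strictly to $-\infty$, and therefore has exactly \emph{one} positive zero: there is no interior minimum of $\Delta$ and no second root $\lambda_2^c$. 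In general, writing $\sigma=\operatorname{ess\,sup}\{y:k(y)>0\}$, one can check $c_1<\sigma$ always, and for every $c\geq\sigma$ the equation $\Delta(\lambda,c)=1$ has exactly one positive root; your two-root argument is valid only when $k$ has unbounded support (then your bound $\int e^{\lambda y}k\,dy\geq e^{\lambda R}\int_R^\infty k\,dy$ does work for every $R$) or when $c_1<c<\sigma$. Two further small points: an infimum of strictly positive quantities can be zero, so your one-line argument for $c_1>0$ also needs the mass bound $\int_\delta^\infty k\,dy>0$ for some fixed $\delta>0$ (which, unlike your claim, is true for every symmetric density); and what the paper actually uses downstream (Theorem \ref{th4}) survives in general, since convexity, $\Delta(0,c)=b'(0)>1$ and $\inf_{\lambda>0}\Delta(\lambda,c)<1$ already give a first root $\lambda_1^c$ at which $\ln\Delta$ crosses zero strictly downward, hence $\Delta(\eta\lambda_1^c,c)<1$ for some $\eta>1$. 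But as written, your justification of the endpoint behavior is incorrect under (k1)--(k3), and the conclusion you derive from it overstates what is true.
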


The lemma can be proved by the properties of convex functions, and we omit it here (see Liang and Zhao \cite[Lemma 3.8]{liang}). By these constants, we state the following results on the existence of positive traveling wave solutions of \eqref{2}.

\begin{theorem}\label{th4}
Assume that $c\geq c_{1}$ holds. Then \eqref{5.1} has a
positive solution $\varphi (\xi )$ such that $\lim_{\xi \rightarrow -\infty
}\varphi (\xi )=0$ and
\begin{equation}
v_{1}\leq \liminf_{\xi \rightarrow +\infty }\varphi (\xi )\leq \limsup_{\xi
\rightarrow +\infty }\varphi (\xi )\leq v_{2}.  \label{5.2}
\end{equation}
\end{theorem}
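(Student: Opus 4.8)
The plan is to realize \eqref{5.1} as the scalar case $m=\tau=1$ of \eqref{4} (with $P_1=b$, $k_1=k$), build a pair of generalized upper and lower solutions, invoke Theorem \ref{th1} for existence, and then treat the two tails separately: the left tail from the upper solution and the asymptotics as $\xi\to+\infty$ from the spreading Lemma \ref{le3}.

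Because $b$ need not be monotone, I would work with its monotone majorant $\overline b$ and minorant $\underline b$. The role of Definition \ref{de2} is precisely that, for any test function $\psi$ with $\underline\phi\le\psi\le\overline\phi$, one has $\underline b(\underline\phi(\cdot))\le b(\psi(\cdot))\le\overline b(\overline\phi(\cdot))$, so that the generalized inequalities \eqref{7}--\eqref{8} reduce to the scalar monotone inequalities
\[
\overline\phi(\xi)\ge\int_{\mathbb R}\overline b(\overline\phi(\xi-c+y))k(y)\,dy,\qquad \underline\phi(\xi)\le\int_{\mathbb R}\underline b(\underline\phi(\xi-c+y))k(y)\,dy.
\]
For the upper solution I take $\overline\phi(\xi)=\min\{v_2,e^{\lambda_1^c\xi}\}$: on the region $\overline\phi=v_2$ the inequality follows from $\overline b(v_2)=v_2$ and $\int k=1$, and on the region $\overline\phi=e^{\lambda_1^c\xi}$ from $\overline b(v)\le b'(0)v$ together with the characteristic identity $\Delta(\lambda_1^c,c)=1$. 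For the lower solution I take $\underline\phi(\xi)=\max\{0,e^{\lambda_1^c\xi}-q\,e^{(\lambda_1^c+\varepsilon)\xi}\}$ with $\varepsilon>0$ small enough that $\lambda_1^c+\varepsilon\in(\lambda_1^c,\lambda_2^c)$ (so $\Delta(\lambda_1^c+\varepsilon,c)<1$) and $q$ large. Taking $q$ large keeps $\sup_\xi\underline\phi$ small, forces $\underline\phi\le\overline\phi$, and confines the arguments of $\underline b$ to a neighbourhood of $0$; on $\{\underline\phi=0\}$ the inequality is trivial, and on $\{\underline\phi>0\}$ it is verified through the near-$0$ estimate for $\underline b$ discussed below.

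With this pair in hand, Theorem \ref{th1} produces a positive $\varphi$ with $\underline\phi\le\varphi\le\overline\phi$. Since $\overline\phi(\xi)\to0$ as $\xi\to-\infty$, we obtain $\lim_{\xi\to-\infty}\varphi(\xi)=0$, and since $\varphi\le\overline\phi\le v_2$ we obtain $\limsup_{\xi\to+\infty}\varphi\le v_2$. For the remaining bound $\liminf_{\xi\to+\infty}\varphi\ge v_1$ I would use Lemma \ref{le3}: a change of variables using $k(-y)=k(y)$ shows that $v_n(x):=\varphi(x+cn)$ solves \eqref{as} with initial datum $v_0=\varphi$, and $0<\varphi\le\overline v$. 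Fixing any $c'\in(0,c_1)$ (so $c\ge c_1>c'$), Lemma \ref{le3} gives $v_1\le\liminf_{n}\inf_{|x|<c'n}v_n(x)$. The windows $\{x+cn:|x|<c'n\}=((c-c')n,(c+c')n)$ satisfy $c-c'>0$, so for large $n$ consecutive windows overlap and their union covers every sufficiently large $\xi$; transferring the uniform bound through this covering yields $\liminf_{\xi\to+\infty}\varphi\ge v_1$, which together with the previous estimates gives \eqref{5.2}.

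The main obstacle is the lower-solution inequality, where non-monotonicity and the critical speed interact. Two points need care. First, one must confirm that the monotone minorant retains the correct linear rate, i.e. $\underline b(v)\ge b'(0)v-Cv^2$ for small $v$; writing $u_v$ for a minimizer in $\underline b(v)=\inf_{u\in(v,\overline v)}b(u)$, the bound $b'(0)(u_v-v)\le L_1u_v^2$ from (b4)--(b5) forces $u_v/v$ to stay bounded and yields the estimate, but this is the one genuinely delicate step. Second, when $c=c_1$ the two roots coincide ($\lambda_1^c=\lambda_2^c$) and no $\varepsilon>0$ keeps $\Delta(\lambda_1^c+\varepsilon,c)<1$, so the exponential lower solution degenerates; here I would either replace it by a sub-exponential profile or, more safely, construct waves for speeds $c_k\downarrow c_1$ and pass to the limit using the compactness underlying Lemma \ref{le2}, the remaining difficulty being to certify that the limiting profile is nontrivial — which is again guaranteed by the spreading estimate of Lemma \ref{le3}.
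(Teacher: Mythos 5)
Your treatment of $c>c_1$ matches the paper's proof in all essentials (same $\overline\varphi=\min\{e^{\lambda_1^c\xi},v_2\}$, same lower solution up to writing $\eta\lambda_1^c$ as $\lambda_1^c+\varepsilon$, same use of Lemma \ref{le3} with overlapping windows to get $\liminf_{\xi\to+\infty}\varphi\ge v_1$). The genuine gap is at the critical speed $c=c_1$, and it is precisely the point the theorem is after. In your limiting argument with $c_k\downarrow c_1$ you flag only nontriviality of the limit as the remaining difficulty; you never address $\lim_{\xi\to-\infty}\varphi(\xi)=0$. This condition does not pass to the limit: each $\varphi_{c_k}$ decays at $-\infty$ because it sits under $e^{\lambda_1^{c_k}\xi}$, but to keep the limit nontrivial you must translate each profile, and after these phase shifts (which may tend to $+\infty$) no uniform exponential bound survives; locally uniform convergence preserves neither the bound nor the mere fact of decay at $-\infty$. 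The paper closes this with a two-part device absent from your proposal: (i) normalize each $\varphi_l$ as in \eqref{la}, so that $\varphi_l(0)=v_1/2$ and $\varphi_l<v_1/2$ on $(-\infty,0)$, which simultaneously certifies nontriviality of the limit and yields the one-sided bound \eqref{la1}; and (ii) rule out $\limsup_{\xi\to-\infty}\varphi^*(\xi)>0$ by contradiction: if $\varphi^*>\epsilon$ on intervals around points $\xi_z\to-\infty$, run the monotone recursion with $\underline b$ from small compactly supported data placed at $\xi_z$, use Lemma \ref{le3} to force $\varphi^*(\xi_z+c_1T)>3v_1/4$ after finitely many steps $T$, and contradict \eqref{la1} once $\xi_z+c_1T<0$. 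Without (i)--(ii), the critical-speed profile your argument produces could, for all you have shown, fail to vanish at $-\infty$; the paper's remark after Theorem \ref{th4} emphasizes that this left-tail limit is exactly what goes beyond Hsu and Zhao. Note too that your appeal to Lemma \ref{le3} for nontriviality only works once the normalization is made concrete: the spreading estimate guarantees each $\varphi_{c_k}$ crosses the level $v_1/2$, and that crossing is what permits the phase shift in (i).

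A secondary flaw: your justification of $\underline b(v)\ge b'(0)v-Cv^2$ for small $v$ is incorrect as stated. The inequality $b'(0)(u_v-v)\le L_1u_v^2$ does not force $u_v/v$ to stay bounded; if $b$ has a deep interior dip, the minimizer $u_v$ remains bounded away from $0$ as $v\to0$, so $u_v/v\to\infty$. The estimate itself is true, but by a different split: take $v_0\le b'(0)/(2L_1)$ so that $g(u)=b'(0)u-L_1u^2$ is increasing on $[0,v_0]$; then $\inf_{u\in(v,v_0]}b(u)\ge g(v)$ by (b5), while $\beta=\inf_{[v_0,\overline v]}b>0$ by (b4), whence $\underline b(v)\ge\min\{g(v),\beta\}=g(v)$ once $b'(0)v\le\beta$. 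In fact the whole detour through $\underline b$ in the lower-solution verification is avoidable: for any admissible test function $\psi$ one has directly $b(\psi)\ge b'(0)\psi-L_1\psi^2\ge b'(0)\underline\varphi-L_1\overline\varphi^{\,2}$ by (b5) and the sandwich $\underline\varphi\le\psi\le\overline\varphi$, which is all the inequality needs and is what the paper's explicit choice of $q$ is calibrated to.
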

\begin{proof}
Let $c>c_{1}$ be fixed. Define
\[
\overline{\varphi }(\xi )=\min \{e^{\lambda _{1}^{c}\xi },v_{2}\},\underline{%
\varphi }(\xi )=\max \{e^{\lambda _{1}^{c}\xi }-qe^{\eta \lambda _{1}^{c}\xi
},0\}
\]%
with
\[
\eta \in \left( 1,\min \left\{ 2,\frac{\lambda _{2}^{c}}{\lambda _{1}^{c}%
}\right\} \right), q=1+\frac{L_1\Delta (2\lambda _{1}^{c},c)}{1-\Delta (\eta\lambda _{1}^{c},c)}.
\]%
Then we can verify that
\begin{eqnarray*}
\overline{\varphi }(\xi ) &\geq &\int_{\mathbb{R}}b(\varphi (\xi
-c+y))k(y)dy, \\
\underline{\varphi }(\xi ) &\leq &\int_{\mathbb{R}}b(\varphi (\xi
-c+y))k(y)dy
\end{eqnarray*}%
for all $\xi \in \mathbb{R}$ and any uniformly continuous functions $\varphi (\xi )$
satisfying%
\[
\overline{\varphi }(\xi )\geq \varphi (\xi )\geq \underline{\varphi }(\xi
),\xi \in \mathbb{R}.
\]%
Thus, $\overline{\varphi }(\xi )$ and $\underline{\varphi }(\xi )$ are
a pair of generalized upper and lower solutions of \eqref{5.1} and Theorem \ref{th1}
implies that \eqref{5.1} has a positive solution $\varphi (\xi )$ such that $%
\lim_{\xi \rightarrow -\infty }\varphi (\xi )=0$.

Applying Lemma \ref{le3} and the invariant form of traveling wave solutions, we
further obtain \eqref{5.2}. In fact, $\varphi (\xi )=v_n(x)$ satisfies
\[
\begin{cases} v_{n+1}(x)=\int_{\mathbb{R}}
b(v_n(y))k(x-y)dy,x\in \mathbb{R}, n=0,1,2, \cdots,\\
v_0(x)=\varphi (x), x\in \mathbb{R}, \end{cases}
\]
and so
\[
\begin{cases} v_{n+1}(x)\ge \int_{\mathbb{R}}
\underline{b}(v_n(y))k(x-y)dy,x\in \mathbb{R}, n=0,1,2, \cdots,\\
v_0(x)=\varphi (x), x\in \mathbb{R}. \end{cases}
\]
Because $\underline{b}(u)$ is nondecreasing for $u\in [0, v_2],$ then Lemma %
\ref{le3} implies that
\[
\liminf_{n\to + \infty} \inf_{|x|<2c} v_n(x) \ge v_1.
\]

From the invariant form of traveling wave solutions, we further obtain that
\[
\liminf_{\xi\to + \infty}\varphi (\xi ) = \liminf_{n\to + \infty} \inf_{|x|<2c}
v_n(x) \ge v_1
\]
because of
\[
\bigcup_{n\in \mathbb{N}, |x|<2c} \{x+cn\} \supset (0,\infty).
\]
Furthermore, $\limsup_{\xi \rightarrow + \infty }\varphi (\xi )\leq v_{2}$ is
true by the definition of $\overline{\varphi} (\xi ).$ This completes the proof when $c>c_1.$

When $c=c_{1},$ we now prove the existence of positive solutions of %
\eqref{5.1} satisfying both $\lim_{\xi \rightarrow -\infty }\varphi (\xi )=0$
and \eqref{5.2}. Let $\{c_{l}\}_{l\in \mathbb{N}}$ be a strictly decreasing
sequence satisfying $\lim_{l\rightarrow + \infty }c_{l}=c_{1}.$ Then for each $%
c_{l},$ there exists $\varphi _{l}(\xi ),$ which is a solution of \eqref{5.1}
with $c=c_{l}$ and satisfies
\begin{equation}
0< \varphi _{l}(\xi )<v_{1}/2,\xi <0,\varphi _{l}(0)=v_{1}/2  \label{la}
\end{equation}
because a traveling wave solution is invariant in the sense of phase shift (for any $a\in \mathbb{R},$ if $\varphi _{l}(\xi )$ is a traveling wave solution, then $\varphi _{l}(\xi +a)$ is one).
Evidently, (k1) implies that $\{\varphi _{l}(\xi )\}_{l\in \mathbb{N}}$ is
uniformly bounded and equicontinuous in $\xi \in \mathbb{R},l\in \mathbb{N}.$
Using Ascoli-Arzela lemma, $\varphi _{l}(\xi )$ has a subsequence, still
denoted by $\varphi _{l}(\xi ),$ and there exists a continuous function $%
\varphi ^{\ast }(\xi )$ such that $\lim_{l\rightarrow \infty }\varphi
_{l}(\xi )=\varphi ^{\ast }(\xi )$ uniformly for $\xi $ in any compact
subset of $\mathbb{R},$ and the convergence is also pointwise in $\xi\in\mathbb{R}.$ Letting $l\rightarrow + \infty $ in \eqref{5.1}, we
see that $\varphi ^{\ast }(\xi )$ satisfies \eqref{5.1} with $c=c_{1}.$
Since $\varphi ^{\ast }(\xi )$ is a bounded solution of \eqref{5.1}, then $%
\varphi ^{\ast }(\xi )$ is uniformly continuous for $\xi \in \mathbb{R}.$
Moreover, \eqref{la} implies that
\begin{equation}
0\le \varphi ^{\ast }(\xi )\leq v_{1}/2,\xi <0,\varphi ^{\ast }(0)=v_{1}/2.
\label{la1}
\end{equation}

By Lemma \ref{le3} and the invariant form of traveling wave solutions, $%
\varphi^*(\xi)$ also satisfies \eqref{5.2} and the discussion is similar to
that of $c>c_1$. We now verify that $\lim_{\xi \rightarrow -\infty
}\varphi^*(\xi)=0.$ Assume by contradiction that there exists $\epsilon >0$
such that
\[
\varphi ^{\ast }(\xi _{z})>2\epsilon
\]%
for a strictly decreasing sequence $\{\xi _{z}\}_{z\in \mathbb{N}}$
satisfying
\[
\xi _{z}<0\text{ for }z\in \mathbb{N}\text{ and }\lim_{z\rightarrow + \infty
}\xi _{z}=-\infty .
\]%
By the uniform continuity, there exists $\varepsilon >0$ such that
\[
\epsilon <\varphi ^{\ast }(\xi )\leq v_{1}/2,\xi \in \lbrack \xi
_{z}-\varepsilon ,\xi _{z}+\varepsilon ],z\in \mathbb{N}.
\]

Consider the initial value problem
\[  \label{ass}
\begin{cases} w_{n+1}(x)=\int_{\mathbb{R}}
\underline{b}(w_n(y))k(x-y)dy,x\in \mathbb{R}, n=0,1,2, \cdots,\\
w_0(x)=\omega (x), x\in \mathbb{R}, \end{cases}
\]
in which $\omega (x)$ is a continuous function satisfying

\begin{description}
\item[(w1)] $\omega (x)=0, |x|\ge \varepsilon;$

\item[(w2)] $\omega (x)$ is decreasing for $x\in [\varepsilon /2,
\varepsilon];$

\item[(w3)] $\omega (x)=\omega (-x),x\in\mathbb{R}; $

\item[(w4)] $\omega (x)=\epsilon, |x|< \varepsilon /2.$
\end{description}

By Lemma \ref{le3}, there exists $T\in\mathbb{N}$ such that
\[
w_n(0)> 3 v_1/4, n\ge T.
\]
Note that $v_n(x)=\varphi^*(x+ c_1 n),$ then for each $\xi _{z},z\in\mathbb{N},$
\[  \label{ass*}
\begin{cases} v_{n+1}(x+\xi_z)\ge \int_{\mathbb{R}}
\underline{b}(v_n(y))k(x+\xi_z-y)dy,x\in \mathbb{R}, n=0,1,2, \cdots,\\ v_0(x+\xi_z)\ge
\omega (x), x\in \mathbb{R}. \end{cases}
\]
From Lemma \ref{le3}, we see that
\[
v_n(\xi _{z}) \ge w_n(0), z\in\mathbb{N}, n\in\mathbb{N},
\]
and
\[
v_T(\xi _{z})=\varphi ^{\ast }(\xi _{z}+c_{1}T)>3v_{1}/4,z\in \mathbb{N}.
\]%
Let $z>0$ such that $\xi _{z}+c_{1}T<0,$ then \eqref{la1} leads to
\[
\varphi ^{\ast }(\xi _{z}+c_1T)\leq v_{1}/2,
\]%
which implies a contradiction. Thus, $\lim_{\xi\to -\infty}\varphi^*(\xi)=0.$
The proof is complete.
\end{proof}
\begin{remark}{\rm
When $c=c_1,$ Hsu and Zhao \cite[Theorem 3.2]{hsuzhao} proved the existence of $\varphi (\xi )$ satisfying \eqref{5.1}-\eqref{5.2}, but they did not answer the existence of $\lim_{\xi \rightarrow -\infty
}\varphi (\xi ).$ Moreover, for $c<c_1 (c>c_1),$ the nonexistence (existence) of traveling wave solutions of \eqref{as} has been proved by Hsu and Zhao \cite{hsuzhao}.}
\end{remark}

If $b(v)$ is monotone for $v\in \lbrack 0,v^{\ast }],$ then the limit
behavior of traveling wave solutions has been obtained by the monotonicity of traveling wave solutions \cite{hsuzhao}. If $b(v)$ is not monotone for $v\in
\lbrack 0,v^{\ast }],$ then the limit behavior should be further
investigated, and Hsu and Zhao \cite{hsuzhao} gave some sufficient
conditions on the topic. Using the contracting rectangle, we may prove some
results on the limit behavior although the verification is technical. For
example, let
\begin{equation}
b(v)=3v(1-v),  \label{5.3}
\end{equation}%
then Hsu and Zhao \cite{hsuzhao} has shown that a positive traveling wave solution of %
\eqref{5.1} with \eqref{5.3} satisfies $\lim_{\xi \rightarrow + \infty }{\varphi }(\xi )=\frac{2%
}{3}.$ We now construct a proper contracting rectangle to illustrate our conclusion.

For $b(v)=3v(1-v),$ we take
\[
\overline{v}=v_2= \frac{3}{4}, v_1=\frac{9}{16}, v^*=\frac{2}{3}.
\]
By Theorem \ref{th4}, we obtain $0\le \varphi (\xi ) \le \frac{3}{4},\xi\in\mathbb{R}$ and
\[
\frac{9}{16}\leq \liminf_{\xi \rightarrow +\infty }\varphi (\xi )\leq
\limsup_{\xi \rightarrow +\infty }\varphi (\xi )\leq \frac{3}{4}.
\]
Applying the dominated convergence theorem, we see that
\[
\limsup_{\xi \rightarrow +\infty }\varphi (\xi )\le \sup_{v\in [\frac{9}{16}, \frac{3}{4}]} b(v)=\frac{189}{256}
\]
and
\[
\liminf_{\xi \rightarrow +\infty }\varphi (\xi )\ge \inf_{v\in [\frac{9}{16}, \frac{189}{256}]} b(v)=\frac{37989}{65536} \left(>\frac{9}{16}=\frac{36864}{65536}\right),
\]
then
\[
\frac{9}{16}<\liminf_{\xi \rightarrow +\infty }\varphi (\xi )\leq
\limsup_{\xi \rightarrow +\infty }\varphi (\xi )<\frac{3}{4}.
\]%
Furthermore, we  have
\begin{equation}
\begin{cases}
b^{2}(v)>v,v\in \lbrack \frac{9%
}{16},\frac{2}{3}),\\
b^{2}(v)<v,v\in (\frac{2}{3},\frac{3}{4}].
\end{cases}\label{5.4}
\end{equation}%

Let
\[
r(s)=\frac{9}{16}+\frac{5s}{48},\,\,t (s)=b(r(s))+\epsilon
(b^{2}(r(s))-r(s))
\]%
for some $\epsilon >0.$ It suffices to verify the definition of contracting rectangle.
\begin{lemma}
\label{le4} If $\epsilon >0$ is small enough, then $[r(s),t (s)]$ defines
a contracting rectangle of $v_{n+1}=3v_{n}(1-v_{n}).$
\end{lemma}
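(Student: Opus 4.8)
The plan is to check the four defining conditions (C1)–(C4) of a contracting rectangle for the explicit functions
\[
r(s)=\tfrac{9}{16}+\tfrac{5s}{48},\qquad t(s)=b(r(s))+\epsilon\bigl(b^{2}(r(s))-r(s)\bigr),
\]
where $b(v)=3v(1-v)$. Conditions (C1) and the endpoint structure are essentially bookkeeping: $r$ is manifestly affine and strictly increasing with $r(0)=\tfrac{9}{16}=v_1$ and $r(1)=\tfrac{9}{16}+\tfrac{5}{48}=\tfrac{2}{3}=v^{*}=E$, which gives the left half of (C3). For the right endpoint, at $s=1$ we have $r(1)=\tfrac23$, so $b(r(1))=b(\tfrac23)=\tfrac23$ and $b^{2}(r(1))-r(1)=0$, hence $t(1)=\tfrac23=E$, matching $\mathbf{R}(1)=\mathbf{E}=\mathbf{T}(1)$.

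\textbf{Monotonicity and the ordering (C2)–(C3).} First I would verify $r(s)<t(s)$ on $[0,1)$ and the endpoint inequalities $t(0)\le\tfrac34=\overline v$ and $t(1)=\tfrac23<t(0)$. On the relevant range $r(s)\in[\tfrac{9}{16},\tfrac23)$ one has $b(r(s))>r(s)$ by the fixed-point/monotonicity structure of $b$ near $v^{*}$, and by \eqref{5.4} the correction term $\epsilon(b^{2}(r(s))-r(s))$ is nonnegative there, so $t(s)\ge b(r(s))>r(s)$, giving $\mathbf R(s)<\mathbf T(s)$. The genuinely delicate point is the \emph{strict decrease} of $t(s)$ demanded by (C2). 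Since $t(s)=g(r(s))$ with $g(v)=b(v)+\epsilon(b^{2}(v)-v)$ and $r$ strictly increasing, it suffices to show $g'(v)<0$ for $v\in[\tfrac{9}{16},\tfrac23)$. Here $g'(v)=b'(v)+\epsilon\bigl((b^{2})'(v)-1\bigr)$, and on this interval $b'(v)=3-6v$ can be positive (indeed $b'(v)>0$ for $v<\tfrac12$, but our interval starts at $\tfrac9{16}>\tfrac12$, so in fact $b'(v)<0$ throughout $[\tfrac9{16},\tfrac23]$). That observation is the crux: since $b'(v)=3-6v\le 3-6\cdot\tfrac9{16}<0$ on the whole interval, $g'(v)=b'(v)+\epsilon\bigl((b^{2})'(v)-1\bigr)$ is a fixed negative quantity plus an $O(\epsilon)$ term, so choosing $\epsilon>0$ small makes $g'<0$, hence $t$ strictly decreasing.

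\textbf{The invariance condition (C4) — the main obstacle.} The hard part is (C4): for each $s\in(0,1)$ and any $u\in[r(s),t(s)]$ I must show $r(s)<b(u)<t(s)$. The upper bound is where the correction term in the definition of $t$ earns its keep. Since $b$ is increasing on $[0,\tfrac12]$ and decreasing beyond, I would control $\max_{u\in[r(s),t(s)]}b(u)$ and compare it against $t(s)=b(r(s))+\epsilon(b^{2}(r(s))-r(s))$; the positive quantity $b^{2}(r(s))-r(s)>0$ (from \eqref{5.4}, valid because $r(s)\in(\tfrac9{16},\tfrac23)$) supplies exactly the slack needed so that $b(u)<t(s)$ for all admissible $u$ once $\epsilon$ is small but positive. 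For the lower bound $b(u)>r(s)$, I would use that $\min_{u\in[r(s),t(s)]}b(u)$ is attained at an endpoint and invoke $b(r(s))>r(s)$ together with the fact that $t(s)$ remains below $\tfrac34$ so that $b(t(s))$ stays above $r(s)$; the strict inequality $b^{2}(r(s))>r(s)$ guarantees the iterate does not merely touch the lower edge. I expect the verification of the upper bound in (C4) to be the most technical step, requiring a uniform (in $s$) smallness condition on $\epsilon$, obtained by compactness of $s\in[0,1]$ and continuity of all quantities involved, after which the same $\epsilon$ simultaneously secures (C2), the ordering, and (C4).
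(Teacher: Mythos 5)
Your overall strategy is the same as the paper's: since $b$ is strictly decreasing on $[\tfrac{9}{16},\tfrac34]$, condition (C4) reduces to the two endpoint inequalities $b(r(s))<t(s)$ and $r(s)<b(t(s))$. The first of these is fine in your write-up (it holds for \emph{every} $\epsilon>0$, since $t(s)-b(r(s))=\epsilon\bigl(b^{2}(r(s))-r(s)\bigr)>0$ by \eqref{5.4}), and your bookkeeping for (C1)--(C3), including the strict decrease of $t$, is correct and in fact more explicit than the paper's.

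The genuine gap is the lower endpoint inequality $r(s)<b(t(s))$, which is precisely the technical heart of the lemma, and neither of your two suggested justifications closes it. First, the claim that ``$t(s)$ remains below $\tfrac34$ so that $b(t(s))$ stays above $r(s)$'' is not a valid deduction: $t(s)<\tfrac34$ only gives $b(t(s))>b(\tfrac34)=\tfrac{9}{16}$, while $r(s)$ climbs up to $\tfrac23$, so this bound says nothing for $s$ away from $0$. Second, the compactness-and-continuity argument for a uniform smallness condition on $\epsilon$ fails because the inequality \emph{degenerates} at $s=1$: there $r(1)=t(1)=\tfrac23$ and $b(t(1))-r(1)=0$, and the slack $b^{2}(r(s))-r(s)$ tends to $0$ as $s\to1$, so it admits no positive lower bound over $s\in(0,1)$; an additive perturbation that is merely ``uniformly $O(\epsilon)$'' cannot be absorbed by a vanishing slack. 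What is needed --- and what the paper does --- is a \emph{proportional} estimate: since $\sup_{u\in[\frac{9}{16},\frac34]}|3-6u|<2$, the map $b$ is Lipschitz with constant $<2$ there, hence
\[
\bigl|b(t(s))-b^{2}(r(s))\bigr|<2\bigl(t(s)-b(r(s))\bigr)=2\epsilon\bigl(b^{2}(r(s))-r(s)\bigr),
\]
which gives $b(t(s))-r(s)>(1-2\epsilon)\bigl(b^{2}(r(s))-r(s)\bigr)>0$ for every $\epsilon\in(0,\tfrac12)$ and every $s\in(0,1)$. The point is that the perturbation is a fixed fraction $2\epsilon<1$ of the \emph{same} quantity that supplies the slack, so both vanish at the same rate as $s\to1$ and the difference stays strictly positive; without this observation your proof does not close.
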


\begin{proof}
If $s\in (0,1)$ and $\epsilon >0,$ then $r(s)\in (\frac{9}{16},\frac{2}{3}).$
By \eqref{5.4}, we obtain
\[
b^{2}(r(s))-r(s)>0.
\]
Then the monotonicity of $b(v), v\in [\frac12,1]$ implies that
\[
t (s)>b(u(s)),u(s)\in \lbrack r(s), t (s)]
\]
because
$
t(s)> b(r(s)), s\in (0,1).
$

To prove that
\[
r(s)<b(u(s)),u(s)\in \lbrack r(s),t (s)],
\]%
we first assume that $\epsilon >0$ is small such that
\[
\frac{2}{3}<t (s)<\frac{3}{4},s\in (0,1).
\]%
Then the monotonicity of $b(u),u\in (\frac{1}{2},1)$ implies that we only need to
verify that
\[
r(s)<b(t(s)).
\]%
Note that $\sup_{u\in \lbrack \frac{9}{16},\frac{3}{4}]}\left\vert
3-6u\right\vert <2$ such that%
\[
\left\vert b(u)-b(v)\right\vert <2\left\vert u-v\right\vert ,u,v\in \lbrack
\frac{9}{16},\frac{3}{4}].
\]%
We further have
\begin{eqnarray*}
b(t (s))-r(s)
&=&b(b(r(s)+\epsilon (b^{2}(r(s))-r(s))))-r(s) \\
&>&b^{2}(r(s))-r(s)-2\epsilon (b^{2}(r(s))-r(s))) \\
&=& (1-2\epsilon)[b^{2}(r(s))-r(s)]\\
&>&0,s\in (0,1)
\end{eqnarray*}%
if $\epsilon \in (0, \frac{1}{2})$. The proof is complete.
\end{proof}

\subsection{A Competitive System of Two Species}
\noindent

In this part, we consider the traveling wave solutions of the following system
\begin{equation}\label{5.6}
\begin{cases}
p_{n+1}(x)=\int_{\mathbb{R}}
\frac{(1+d_1)p_n(x-y)}{1+d_1(p_n(x-y)+b_1p_{n-1}(x-y)+a_1q_n(x-y))}k_1(y)dy,\\
q_{n+1}(x)=\int_{\mathbb{R}}
\frac{(1+d_2)q_n(x-y)}{1+d_2(q_n(x-y)+b_2q_{n-1}(x-y)+a_2p_n(x-y))}k_2(y)dy,
\end{cases}
\end{equation}
in which $n=0,1,2,\cdots,  x\in \mathbb{R},$ all the parameters are nonnegative,
and $k_1(y),k_2(y)$ are probability functions describing the
migration of the individuals and satisfy (k1)-(k3). In particular, we assume that
\begin{equation}\label{5.7}
d_1>0,\, d_2>0, \, 1+b_1>a_2, \, 1+b_2>a_1
\end{equation}
such that \eqref{5.6} admits a spatially homogeneous steady state
\[
(p^*, q^*) =\left(\frac{1+b_2-a_1}{(1+b_1)(1+b_2)-a_1a_2} , \frac{1+b_1-a_2}{(1+b_1)(1+b_2)-a_1a_2}\right)\gg (0,0)
\]
with
\[
\begin{cases}
p^{\ast }(1+b_{1})+a_{1}q^{\ast }=1,\\
q^{\ast }(1+b_{2})+a_{2}p^{\ast }=1.
\end{cases}
\]

If $b_1=b_2=0,$ then \eqref{5.6} is the model in Lewis et al. \cite{lewis}, Lin et al. \cite{llrjmb}, Zhang and Zhao \cite{zhangzhao}. More precisely, if $0<a_1<1<a_2,$ then Lewis et al. \cite{lewis} investigated the propagation modes between a resident and an invader. When $a_1>1, a_2>1,$ Zhang and Zhao \cite{zhangzhao} proved the existence and stability of bistable traveling wave solutions. Recently,
Lin and Li \cite{linlidc} and Lin et al. \cite{llrjmb} obtained the existence of traveling wave solutions describing the coinvasion of two competitors if $k_i$ is the Gaussian or admits compact support and $a_1,a_2\in [0,1)$. In this paper, we assume that $k_1(y),k_2(y)$ satisfy (k1)-(k3) and establish the existence/nonexistence of traveling wave solutions, which covers and completes the corresponding results in \cite{linlidc,llrjmb}.

Let
$
\rho (\xi)= p_n(x), \varrho (\xi)=q_n(x), \xi=x+cn
$
be a traveling wave solution of \eqref{5.6}, then
\begin{equation}\label{5.8}
\begin{cases}
\rho (\xi +c)=\int_{\mathbb{R}}
\frac{(1+d_1)\rho (\xi -y)}{1+d_1(\rho (\xi -y)+b_1\rho (\xi -y-c)+a_1\varrho (\xi-y))}k_1(y)dy,\\
 \varrho (\xi+c)=\int_{\mathbb{R}}
\frac{(1+d_2)\varrho (\xi-y)}{1+d_2(\varrho (\xi-y)+b_2\varrho (\xi-y-c)+a_2\rho (\xi -y))}k_2(y)dy.
\end{cases}
\end{equation}

To study \eqref{5.8}, for $\lambda >0, c>0,$ we also define
\[
\Theta_1(\lambda, c)=(1+d_1)\int_{\mathbb{R}}e^{\lambda y- \lambda c} k_1(y)dy, \Theta_2(\lambda, c)=(1+d_2)\int_{\mathbb{R}}e^{\lambda y- \lambda c} k_2(y)dy.
\]
\begin{lemma}\label{le6}
Let
\[
c^*= \max\left\{ \inf_{\lambda >0}\frac{\ln \left((1+d_1)\int_{\mathbb{R}}e^{\lambda y}k_1(y)dy\right)}{\lambda}, \inf_{\lambda >0}\frac{\ln \left((1+d_2)\int_{\mathbb{R}}e^{\lambda y}k_2(y)dy\right)}{\lambda}\right\}.
\]
Then $c^*>0$ holds and $
\Theta_i(\lambda,c)=1 $ has  two distinct positive roots
$\lambda_{i1}^c< \lambda_{i2}^c$ for any $c>c^*$ and each $i=1,2$.
Moreover, if $c\in (0, c^*),$ then $\Theta_1(\lambda,c)>1
$ for any $\lambda \ge 0$ or $\Theta_2(\lambda,c)> 1$ for any $\lambda \ge 0$. In
addition, for any given $c> c^*,$ there exists $\eta \in (1,2)$ such
that $\eta\lambda_{i1}^c< \lambda_{11}^c+\lambda_{21}^c$ and $\Theta_i(\lambda,c)<1 $ for all $\lambda \in
(\lambda_{i1}^c, \eta \lambda_{i1}^c],$ $ i=1,2.$
\end{lemma}
\begin{theorem}\label{th5}
For each $c>c^*,$ \eqref{5.8} has a positive solution $(\rho (\xi), \varrho (\xi))$ satisfying
\[
0< \rho (\xi) <1, 0< \varrho (\xi) <1, \xi\in\mathbb{R}.
\]
\end{theorem}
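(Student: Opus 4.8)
The plan is to apply Theorem~\ref{th1}, so the entire task reduces to constructing a pair of generalized upper and lower solutions for the system \eqref{5.8}. Since \eqref{5.6} is a higher-order competitive system, the coupling is cooperative-in-$\rho$ but competitive across species, which is precisely why the \emph{generalized} (rather than classical) upper and lower solutions of Definition~\ref{de2} are needed: the inequalities \eqref{7}--\eqref{8} must hold for \emph{all} admissible test functions $\psi_{ij}$ squeezed between the lower and upper profiles, so I need not track the sign of the monotonicity in each argument. First I would fix $c>c^*$ and invoke Lemma~\ref{le6} to obtain, for each $i=1,2$, the smaller positive root $\lambda_{i1}^c$ of $\Theta_i(\lambda,c)=1$ together with an exponent $\eta\in(1,2)$ satisfying $\eta\lambda_{i1}^c<\lambda_{11}^c+\lambda_{21}^c$ and $\Theta_i(\lambda,c)<1$ on $(\lambda_{i1}^c,\eta\lambda_{i1}^c]$.

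The natural ansatz, mirroring the scalar construction in Theorem~\ref{th4}, is to take upper solutions of the form
\[
\overline{\rho}(\xi)=\min\{e^{\lambda_{11}^c\xi},1\},\qquad \overline{\varrho}(\xi)=\min\{e^{\lambda_{21}^c\xi},1\},
\]
and lower solutions
\[
\underline{\rho}(\xi)=\max\{e^{\lambda_{11}^c\xi}-q_1 e^{\eta\lambda_{11}^c\xi},0\},\qquad \underline{\varrho}(\xi)=\max\{e^{\lambda_{21}^c\xi}-q_2 e^{\eta\lambda_{21}^c\xi},0\},
\]
for constants $q_1,q_2>1$ to be chosen large. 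For the upper-solution inequality \eqref{7}, I would bound each nonlinear term from above by dropping the competitive contributions in the denominator; e.g. for the first equation, whenever $0\le\psi_{ij}\le\overline{\rho},\overline{\varrho}$ the integrand is at most $(1+d_1)\psi(\xi-y)$, so on the region where $\overline{\rho}(\xi)=e^{\lambda_{11}^c\xi}<1$ the inequality follows from $\Theta_1(\lambda_{11}^c,c)=1$, and where $\overline{\rho}(\xi)=1$ it follows from the invariant-region bound $\le 1$. The lower-solution inequality \eqref{8} is the delicate part. On the region where $\underline{\rho}(\xi)>0$ I would estimate the integrand from below using the elementary bound $\frac{(1+d_1)v}{1+d_1 w}\ge (1+d_1)v(1-d_1 w)$ together with condition \eqref{5.7}; the linear term reproduces $\Theta_1(\lambda_{11}^c,c)=1$ while the quadratic correction term is dominated by the $-q_1 e^{\eta\lambda_{11}^c\xi}$ ingredient precisely because $\Theta_1(\eta\lambda_{11}^c,c)<1$ and $\eta\lambda_{11}^c<\lambda_{11}^c+\lambda_{21}^c$, the latter ensuring the cross-species interaction term $e^{(\lambda_{11}^c+\lambda_{21}^c)\xi}$ decays faster than $e^{\eta\lambda_{11}^c\xi}$.

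The main obstacle will be verifying \eqref{8}: because the denominator mixes $\rho(\xi-y)$, $\rho(\xi-y-c)$ (the higher-order term) and $\varrho(\xi-y)$, the quadratic remainder contains several distinct exponential rates, and I must confirm that every such term is of order at least $e^{\eta\lambda_{11}^c\xi}$ so that a single sufficiently large $q_1$ (and symmetrically $q_2$) absorbs them all uniformly in $\xi$. This is exactly where the inequality $\eta\lambda_{i1}^c<\lambda_{11}^c+\lambda_{21}^c$ from Lemma~\ref{le6} is used, and where the generalized nature of Definition~\ref{de2} is essential, since the worst case is attained by choosing $\psi_{ij}$ at the appropriate endpoint for each argument independently. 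Once both inequalities are checked for all admissible $\psi_{ij}$, Theorem~\ref{th1} yields a fixed point $(\rho,\varrho)\in\Gamma$; the strict bounds $0<\rho(\xi)<1$, $0<\varrho(\xi)<1$ then follow from the construction together with the strict positivity of the kernels and the fact that $\overline{\rho},\overline{\varrho}<1$ wherever the exponential caps are active, completing the proof.
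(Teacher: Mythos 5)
Your construction is essentially the paper's own proof: the same exponential upper solutions $\min\{e^{\lambda_{i1}^c\xi},1\}$ and lower solutions $\max\{e^{\lambda_{i1}^c\xi}-q\,e^{\eta\lambda_{i1}^c\xi},0\}$, the same reduction (via the monotonicity of $(1+d_1)u/(1+d_1(u+b_1v+a_1w))$ in each of its arguments) to two extreme inequalities per component, the same elementary bound $\frac{1}{1+a}\ge 1-a$ for the lower inequality, and the same use of $\Theta_i(\eta\lambda_{i1}^c,c)<1$, $\eta<2$, and $\eta\lambda_{i1}^c<\lambda_{11}^c+\lambda_{21}^c$ from Lemma \ref{le6} to absorb the quadratic remainders into a sufficiently large $q$, after which Theorem \ref{th1} gives the fixed point. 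One small conceptual wobble: your opening claim that with generalized upper and lower solutions you ``need not track the sign of the monotonicity in each argument'' is not right as a description of the verification --- Definition \ref{de2} demands the inequalities for \emph{all} intermediate $\psi_{ij}$, and the only way to check that is precisely to use the monotonicity in each argument to locate the worst-case endpoints, which you in fact do later.

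The one step that would fail as written is your justification of the strict bounds $0<\rho(\xi),\varrho(\xi)<1$: you appeal to ``strict positivity of the kernels,'' but (k1)--(k3) do not provide this; compactly supported kernels are admissible and are explicitly within the paper's scope. Moreover, Theorem \ref{th1} only yields $0\le\rho\le\overline{\rho}\le 1$, and $\overline{\rho}(\xi)=1$ for all $\xi\ge 0$, so the cap being ``inactive'' somewhere gives nothing there. The correct argument (the paper's, stated tersely) is a propagation argument using only (k1) and the lower solution: if $\rho(\xi_1)=0$ at some point, the wave equation forces $\rho$ to vanish on a set of the form $\xi_1-c-\mathrm{supp}(k_1)$ (up to null sets, then everywhere there by continuity), and iterating pushes zeros arbitrarily far to the left, contradicting $\rho\ge\underline{\rho}>0$ on a neighborhood of $-\infty$; a similar argument (or, when $a_1>0$, the strict inequality $\frac{(1+d_1)u}{1+d_1(u+a_1 w)}<1$ for $u\le 1$, $w>0$) rules out $\rho(\xi_1)=1$. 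With that replacement your proof goes through.
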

\begin{proof}
For $q>1,$ define continuous functions
\[
\overline{\rho }(\xi )=\min \{e^{\lambda _{11}^{c}\xi },1\},\overline{%
\varrho }(\xi )=\min \{e^{\lambda _{21}^{c}\xi },1\}
\]%
and
\[
\underline{\rho }(\xi )=\max \{e^{\lambda _{11}^{c}\xi }-qe^{\eta \lambda
_{11}^{c}\xi },0\},\underline{\varrho }(\xi )=\max \{e^{\lambda _{21}^{c}\xi
}-qe^{\eta \lambda _{21}^{c}\xi },0\}
\]%
with $\eta $ formulated by Lemma \ref{le6}. We now prove that $(\overline{\rho }(\xi ),\overline{\varrho }(\xi )),(%
\underline{\rho }(\xi ),\underline{\varrho }(\xi ))$ are a pair if
generalized upper and lower solution of \eqref{5.8} for any uniformly
continuous functions
\[
\rho _{1}(\xi ),\rho _{2}(\xi ),\varrho _{1}(\xi ),\varrho _{2}(\xi )
\]%
satisfying
\[
\underline{\rho }(\xi )\leq \rho _{1}(\xi ),\rho _{2}(\xi )\leq \overline{%
\rho }(\xi ),\underline{\varrho }(\xi )\leq \varrho _{1}(\xi ),\varrho
_{2}(\xi )\leq \overline{\varrho }(\xi ),\xi \in \mathbb{R}.
\]%
Namely, for $\overline{\rho }(\xi )$ and $\underline{\rho }(\xi ),$ we need
to verify that%
\[
\underline{\rho }(\xi +c)\leq \int_{\mathbb{R}}\frac{(1+d_{1})\rho _{1}(\xi
-y)}{1+d_{1}(\rho _{1}(\xi -y)+b_{1}\rho _{2}(\xi -y-c)+a_{1}\varrho
_{1}(\xi -y))}k_{1}(y)dy\leq \overline{\rho }(\xi +c)
\]%
for any $\xi \in \mathbb{R}.$ From the monotonicity of%
\[
\frac{(1+d_{1})u}{1+d_{1}(u+b_{1}v+a_{1}w)},u,v,w\geq 0,
\]%
it suffices to verify that%
\begin{equation}
\underline{\rho }(\xi +c)\leq \int_{\mathbb{R}}\frac{(1+d_{1})\underline{%
\rho }(\xi -y)}{1+d_{1}(\underline{\rho }(\xi -y)+b_{1}\overline{\rho }(\xi
-y-c)+a_{1}\overline{\varrho }(\xi -y))}k_{1}(y)dy,\xi \in \mathbb{R},
\label{sub}
\end{equation}%
and%
\begin{equation}
\int_{\mathbb{R}}\frac{(1+d_{1})\overline{\rho }(\xi -y)}{1+d_{1}\overline{%
\rho }(\xi -y)}k_{1}(y)dy\leq \overline{\rho }(\xi +c),\xi \in \mathbb{R}.
\label{sup}
\end{equation}

If $\overline{\rho }(\xi +c)=e^{\lambda _{11}^{c}(\xi +c)},$ then
\begin{eqnarray*}
\int_{\mathbb{R}}\frac{(1+d_{1})\overline{\rho }(\xi -y)}{1+d_{1}\overline{%
\rho }(\xi -y)}k_{1}(y)dy
&\leq &\int_{\mathbb{R}}(1+d_{1})\overline{\rho }(\xi -y)k_{1}(y)dy \\
&\leq &\int_{\mathbb{R}}(1+d_{1})e^{\lambda _{11}^{c}(\xi -y)}k_{1}(y)dy \\
&=&e^{\lambda _{11}^{c}(\xi +c)}=\overline{\rho }(\xi +c).
\end{eqnarray*}%
If $\overline{\rho }(\xi +c)=1,$ then
\[
\int_{\mathbb{R}}\frac{(1+d_{1})\overline{\rho }(\xi -y)}{1+d_{1}\overline{%
\rho }(\xi -y)}k_{1}(y)dy\leq \int_{\mathbb{R}}\frac{(1+d_{1})\overline{\rho
}(\xi -y)}{1+d_{1}\overline{\rho }(\xi -y)}k_{1}(y)dy\leq 1=\overline{\rho }%
(\xi +c).
\]
By what we have done, we have proved that \eqref{sup} holds.

If $\underline{\rho }(\xi +c)=0,$ then
\[
\frac{(1+d_{1})\underline{\rho }(\xi -y)}{1+d_{1}(\underline{\rho }(\xi
-y)+b_{1}\overline{\rho }(\xi -y-c)+a_{1}\overline{\varrho }(\xi -y))}\geq
0,y\in \mathbb{R}
\]%
and
\[
\int_{\mathbb{R}}\frac{(1+d_{1})\underline{\rho }(\xi -y)}{1+d_{1}(%
\underline{\rho }(\xi -y)+b_{1}\overline{\rho }(\xi -y-c)+a_{1}\overline{%
\varrho }(\xi -y))}k_{1}(y)dy\geq 0=\underline{\rho }(\xi +c)
\]%
is clear. Otherwise, from
\[
\frac{1}{1+a}\geq 1-a,a\geq 0,
\]%
we see that%
\begin{eqnarray*}
&&\int_{\mathbb{R}}\frac{(1+d_{1})\underline{\rho }(\xi -y)}{1+d_{1}(%
\underline{\rho }(\xi -y)+b_{1}\overline{\rho }(\xi -y-c)+a_{1}\overline{%
\varrho }(\xi -y))}k_{1}(y)dy \\
&\geq &\int_{\mathbb{R}}(1+d_{1})\underline{\rho }(\xi -y)[1-d_{1}(%
\underline{\rho }(\xi -y)+b_{1}\overline{\rho }(\xi -y-c)+a_{1}\overline{%
\varrho }(\xi -y))]k_{1}(y)dy \\
&=&\int_{\mathbb{R}}(1+d_{1})\underline{\rho }(\xi -y)k_{1}(y)dy \\
&&-d_{1}(1+d_{1})\int_{\mathbb{R}}\underline{\rho }(\xi -y)(\underline{\rho }%
(\xi -y)+b_{1}\overline{\rho }(\xi -y-c)+a_{1}\overline{\varrho }(\xi
-y))k_{1}(y)dy.
\end{eqnarray*}%
Because of $\underline{\rho }(s)\geq e^{\lambda _{11}^{c}s}-qe^{\eta \lambda
_{11}^{c}s},s\in \mathbb{R},$ we have%
\begin{eqnarray*}
\int_{\mathbb{R}}(1+d_{1})\underline{\rho }(\xi -y)k_{1}(y)dy
&\geq &\int_{\mathbb{R}}(1+d_{1})[e^{\lambda _{11}^{c}(\xi -y)}-qe^{\eta
\lambda _{11}^{c}(\xi -y)}]k_{1}(y)dy \\
&=&e^{\lambda _{11}^{c}(\xi +c)}-q\Theta _{1}(\eta \lambda
_{11}^{c},c)e^{\eta \lambda _{11}^{c}(\xi +c)}
\end{eqnarray*}%
by Lemma \ref{le6}. At the same time,%
\[
\overline{\rho }(s)\le \overline{\rho }(s+c),\underline{\rho }(s)\leq \overline{\rho }(s)\leq e^{\lambda _{11}^{c}s},%
\underline{\varrho }(s)\leq \overline{\varrho }(s)\leq e^{\lambda
_{21}^{c}s},s\in \mathbb{R}
\]%
such that%
\begin{eqnarray*}
&&-d_{1}(1+d_{1})\int_{\mathbb{R}}\underline{\rho }(\xi -y)(\underline{\rho }%
(\xi -y)+b_{1}\overline{\rho }(\xi -y-c)+a_{1}\overline{\varrho }(\xi
-y))k_{1}(y)dy \\
&\geq &-\int_{\mathbb{R}}d_{1}(1+d_{1})[(1+b_{1})\overline{\rho }^{2}(\xi
-y)+a_{1}\overline{\rho }(\xi -y)\overline{\varrho }(\xi -y)]k_{1}(y)dy \\
&\geq &-\int_{\mathbb{R}}d_{1}(1+d_{1})[(1+b_{1})e^{2\lambda _{11}^{c}(\xi
-y)}+a_{1}e^{(\lambda _{11}^{c}+\lambda _{21}^{c})(\xi -y)}]k_{1}(y)dy \\
&=&-d_{1}(1+b_{1})\Theta _{1}(2\lambda _{11}^{c},c)e^{2\lambda _{11}^{c}(\xi
+c)}-d_{1}a_{1}\Theta _{1}(\lambda _{11}^{c}+\lambda _{21}^{c},c)e^{(\lambda
_{11}^{c}+\lambda _{21}^{c})(\xi +c)}
\end{eqnarray*}%
by Lemma \ref{le6}. Then \eqref{sub} holds provided that
\[
q>\frac{d_{1}(1+b_{1})\Theta _{1}(2\lambda _{11}^{c},c)+d_{1}a_{1}\Theta
_{1}(\lambda _{11}^{c}+\lambda _{21}^{c},c)}{1-\Theta _{1}(\eta \lambda
_{11}^{c},c)}+1.
\]%

Similarly, if
\[
q>\frac{d_{2}(1+b_{2})\Theta _{2}(2\lambda _{21}^{c},c)+d_{2}a_{2}\Theta
_{2}(\lambda _{11}^{c}+\lambda _{21}^{c},c)}{1-\Theta _{2}(\eta \lambda
_{21}^{c},c)}+1,
\]%
then we obtain the inequalities satisfied by $\underline{\varrho }(\xi ),%
\overline{\varrho }(\xi )$ and a pair of generalized upper and lower
solutions of \eqref{5.8}.

By Theorem \ref{th1}, \eqref{5.8} has a solution $(\rho (\xi), \varrho (\xi))
$ satisfying
\[
0\le \rho (\xi) \le 1, 0\le \varrho (\xi) \le 1
\]
and
\[
\lim_{\xi\to -\infty}\rho (\xi)=\lim_{\xi\to -\infty}\varrho (\xi)=0,
\sup_{\xi\in\mathbb{R}}\rho (\xi) >0, \sup_{\xi\in\mathbb{R}}\varrho (\xi)
>0.
\]

If $\rho (\xi)=0$ for some $\xi\in \mathbb{R},$ then (k1) implies that $\rho
(\xi) \equiv 0,$ which is a contradiction. Similarly, we can obtain $0< \rho
(\xi) <1, 0< \varrho (\xi) <1$ and the proof is complete.
\end{proof}

In what follows, we consider the limit behavior of traveling wave solutions when
\begin{equation}\label{5.9}
a_1+b_1 <1, a_2+b_2 <1.
\end{equation}
\begin{lemma}\label{le5}
Assume that \eqref{5.7} and \eqref{5.9} hold and define
\begin{eqnarray*}
r_{1}(s) &=&sp^*,t_{1}(s)=sp^*+(1+\epsilon )\left( 1-s\right) ,\, \\
\,r_{2}(s) &=&sq^*,t_{2}(s)=sq^*+(1+\epsilon )\left( 1-s\right)
\end{eqnarray*}%
with $\epsilon >0$ such that
\[
(b_{1}+a_{1})(1+\epsilon )<1, (b_{2}+a_{2})(1+\epsilon )<1.
\]
Then $[r_1(s), t_1(s)]\times [r_2(s), t_2(s)]$ is a contracting rectangle of
\begin{equation}\label{5.90}
\begin{cases}
p_{n+1}=
\frac{(1+d_1)p_n}{1+d_1(p_n+b_1p_{n-1}+a_1q_n)}, \\
q_{n+1}=
\frac{(1+d_2)q_n}{1+d_2(q_n+b_2q_{n-1}+a_2p_n)}.
\end{cases}
\end{equation}
\end{lemma}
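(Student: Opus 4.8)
The plan is to verify the four defining conditions (C1)--(C4) of Definition \ref{squ} for the pair of curves $\mathbf{R}(s)=(r_1(s),r_2(s))$ and $\mathbf{T}(s)=(t_1(s),t_2(s))$, taking $\mathbf{E}=(p^*,q^*)$ and choosing the bound $\mathbf{M}$ of the invariant region large enough that $\mathbf{T}(0)\le \mathbf{M}$ (any $M_i\ge 1+\epsilon$ works, since $[0,M_i]$ is invariant for the $i$-th map whenever $M_i\ge 1$). Conditions (C1)--(C3) are immediate: each $r_i,t_i$ is affine in $s$, hence continuous; $r_i$ has positive slope $p^*$ (resp.\ $q^*$) while $t_i$ has slope $p^*-(1+\epsilon)<0$ (resp.\ $q^*-(1+\epsilon)<0$) because $p^*,q^*\in(0,1)$, giving the required strict monotonicity; and evaluating at the endpoints gives $\mathbf{R}(0)=\mathbf{0}$, $\mathbf{R}(1)=\mathbf{T}(1)=(p^*,q^*)=\mathbf{E}$, and $\mathbf{T}(0)=(1+\epsilon,1+\epsilon)$.

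The substance of the lemma is condition (C4). Writing
\[
P_1[u_1^1,u_2^1,u_1^2,u_2^2]=\frac{(1+d_1)u_2^1}{1+d_1(u_2^1+b_1u_1^1+a_1u_2^2)},
\]
I would first record the monotonicity of $P_1$ on the box $[r_1(s),t_1(s)]^2\times[r_2(s),t_2(s)]^2$: it is increasing in the current own density $u_2^1$ and decreasing in the delayed own density $u_1^1$ and in the competitor density $u_2^2$ (and independent of $u_1^2$). Hence over the box the infimum of $P_1$ is attained at $(u_2^1,u_1^1,u_2^2)=(r_1(s),t_1(s),t_2(s))$ and the supremum at $(t_1(s),r_1(s),r_2(s))$, so it suffices to check
\[
r_1(s)<\frac{(1+d_1)r_1(s)}{1+d_1(r_1(s)+b_1t_1(s)+a_1t_2(s))},\qquad
\frac{(1+d_1)t_1(s)}{1+d_1(t_1(s)+b_1r_1(s)+a_1r_2(s))}<t_1(s)
\]
for $s\in(0,1)$, together with the symmetric pair for $P_2$.

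After dividing each inequality by the positive factor $r_1(s)$, respectively $t_1(s)$, and clearing the denominator, the two inequalities reduce to
\[
r_1(s)+b_1t_1(s)+a_1t_2(s)<1\qquad\text{and}\qquad t_1(s)+b_1r_1(s)+a_1r_2(s)>1.
\]
Here the key simplification is the steady-state identity $p^*(1+b_1)+a_1q^*=1$: substituting the explicit affine forms of $r_i,t_i$ collapses the $s$-linear part to exactly $s$, leaving $r_1(s)+b_1t_1(s)+a_1t_2(s)=s+(b_1+a_1)(1+\epsilon)(1-s)$ and $t_1(s)+b_1r_1(s)+a_1r_2(s)=s+(1+\epsilon)(1-s)$. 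Since $1-s>0$ on $(0,1)$, the first inequality is equivalent to $(b_1+a_1)(1+\epsilon)<1$ and the second to $1+\epsilon>1$; the former is the hypothesis imposed on $\epsilon$ and the latter holds automatically. The analogous computation for $P_2$ uses $q^*(1+b_2)+a_2p^*=1$ together with $(b_2+a_2)(1+\epsilon)<1$. The main obstacle is essentially bookkeeping: correctly identifying the monotone directions of $P_i$ so that (C4) reduces to the two corner inequalities, after which the steady-state relations make the algebra collapse; the choice of $\mathbf{M}$ in (C3) and the constraint on $\epsilon$ are precisely what keep everything consistent.
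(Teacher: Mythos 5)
Your proposal is correct and follows essentially the same route as the paper: monotonicity of each $P_i$ reduces (C4) to the two corner inequalities (the paper's \eqref{5.10}--\eqref{5.11}), and the steady-state identities $p^{*}(1+b_{1})+a_{1}q^{*}=1$, $q^{*}(1+b_{2})+a_{2}p^{*}=1$ collapse the algebra to $s+(b_{i}+a_{i})(1+\epsilon)(1-s)<1$ and $s+(1+\epsilon)(1-s)>1$, exactly as in the paper. Your explicit verification of (C1)--(C3), including the choice of $\mathbf{M}$ and the observation $p^{*},q^{*}<1+\epsilon$, is a minor (and welcome) addition to steps the paper dismisses as clear.
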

\begin{proof}
The continuity and monotonicity in these four functions are clear and we just verify (C4) of Definition \ref{squ}. Since
\[
\frac{(1+d_1)p_n}{1+d_1(p_n+b_1p_{n-1}+a_1q_n)}
\]
is increasing in $p_n\ge 0$ and decreasing in $p_{n-1}\ge 0, q_n\ge 0,$ then for the first equation of \eqref{5.90}, it suffices to prove that
\begin{equation}\label{5.10}
r_1(s)<
\frac{(1+d_1)r_1(s)}{1+d_1(r_1(s)+b_1t_1(s)+a_1 t_2(s))}
\end{equation}
and
\begin{equation}\label{5.11}
t_1(s)>
\frac{(1+d_1)t_1(s)}{1+d_1(t_1(s)+b_1r_1(s)+a_1 r_2(s))}
\end{equation}
for $s\in (0,1).$ In fact, $(b_{1}+a_{1})(1+\epsilon )<1$ and $s\in (0,1)$ imply that
\begin{eqnarray*}
&&\frac{1+d_{1}}{1+d_{1}(r_{1}(s)+b_{1}t_{1}(s)+a_{1}t_{2}(s))} \\
&=&\frac{1+d_{1}}{1+d_{1}(sp^*+b_{1}(sp^*+(1+\epsilon )\left( 1-s\right)
)+a_{1}(sq^*+(1+\epsilon )\left( 1-s\right) )} \\
&=&\frac{1+d_{1}}{1+d_{1}(s+(b_{1}+a_{1})(1+\epsilon )(1-s))} \\
&>&\frac{1+d_{1}}{1+d_{1}(s+(1-s))} \\
&=&1
\end{eqnarray*}
and \eqref{5.10} has been proved. Moreover, \eqref{5.11} is true since
\begin{eqnarray*}
\frac{1+d_{1}}{1+d_{1}(t_{1}(s)+b_{1}r_{1}(s)+a_{1}r_{2}(s))}
&=&\frac{1+d_{1}}{1+d_{1}(sp^{\ast }+(1+\epsilon )\left( 1-s\right)
+b_{1}sp^{\ast }+a_{1}sq^{\ast })} \\
&=&\frac{1+d_{1}}{1+d_{1}(s+(1+\epsilon )\left( 1-s\right) )} \\
&<&\frac{1+d_{1}}{1+d_{1}(s+1-s)} \\
&=&1.
\end{eqnarray*}

In a similar way, we have
\begin{eqnarray*}
r_{2}(s) &<&\frac{(1+d_{2})r_{2}(s)}{%
1+d_{2}(r_{2}(s)+b_{2}t_{2}(s)+a_{2}t_{1}(s))}, \\
t_{2}(s) &>&\frac{(1+d_{2})t_{2}(s)}{%
1+d_{2}(t_{2}(s)+b_{2}r_{2}(s)+a_{2}r_{1}(s))}
\end{eqnarray*}%
for $s\in (0,1).$ The proof is complete.
\end{proof}

We now give our main conclusion on the asymptotic behavior of traveling wave solutions of \eqref{5.6}.
\begin{theorem}\label{th6}
Assume that $(\rho (\xi), \varrho (\xi))$ is given by Theorem \ref{th5}. If \eqref{5.9} holds, then
\[
\lim_{\xi\to + \infty}(\rho (\xi), \varrho (\xi))=(p^*,q^*).
\]
\end{theorem}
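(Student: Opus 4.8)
The plan is to deduce the theorem from the abstract convergence result Theorem \ref{th3}, fed with the contracting rectangle $[r_1(s),t_1(s)]\times[r_2(s),t_2(s)]$ supplied by Lemma \ref{le5}. For that rectangle one has $r_i(1)=t_i(1)$ equal to the $i$th coordinate of $(p^*,q^*)$, so $\mathbf{E}=(p^*,q^*)$ and the conclusion of Theorem \ref{th3} is exactly $\lim_{\xi\to+\infty}(\rho,\varrho)=(p^*,q^*)$. Everything therefore reduces to producing a single $s_1\in(0,1)$ with
\[
\mathbf{R}(s_1)\le \liminf_{\xi\to+\infty}(\rho(\xi),\varrho(\xi))\le\limsup_{\xi\to+\infty}(\rho(\xi),\varrho(\xi))\le\mathbf{T}(s_1).
\]
The upper estimate is essentially free: Theorem \ref{th5} already gives $0<\rho(\xi)<1$ and $0<\varrho(\xi)<1$, hence $\limsup_{\xi\to+\infty}\rho\le 1$ and $\limsup_{\xi\to+\infty}\varrho\le 1$, while $t_i(s_1)\to 1+\epsilon>1$ as $s_1\to 0^+$; so for all sufficiently small $s_1$ both upper inequalities hold.

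The substantive step is to show the two limits inferior are strictly positive, which is where the competitive (non order-preserving) nature of \eqref{5.6} must be circumvented. I would argue as in the proof of Theorem \ref{th4}: writing $p_n(x)=\rho(x+cn)$, $q_n(x)=\varrho(x+cn)$, these solve \eqref{5.6}, and since $p_{n-1}(x)<1$ and $q_n(x)<1$ the first component obeys the scalar inequality
\[
p_{n+1}(x)\ge\int_{\mathbb{R}}g_1(p_n(y))k_1(x-y)\,dy,\qquad g_1(u)=\frac{(1+d_1)u}{1+d_1(u+b_1+a_1)}.
\]
The map $g_1$ is nondecreasing, $g_1(0)=0$, has the positive fixed point $1-a_1-b_1$, and $g_1'(0)=\frac{1+d_1}{1+d_1(a_1+b_1)}>1$ precisely because $a_1+b_1<1$ in \eqref{5.9}; one checks that it satisfies (b1)--(b5) with $\overline v=1$. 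Hence the comparison and spreading parts of Lemma \ref{le3} apply: $p_n(x)\ge v_n(x)$, where $v_n$ solves the scalar monotone equation with kernel $k_1$, birth function $g_1$ and $v_0=\rho$, and $\liminf_{n\to+\infty}\inf_{|x|<c_0 n}v_n\ge 1-a_1-b_1$ for a fixed small $c_0>0$. Using the phase-shift invariance of the profile together with the covering $\bigcup_{n,\,|x|<2c}\{x+cn\}\supset(0,\infty)$ exactly as in Theorem \ref{th4}, this yields $\liminf_{\xi\to+\infty}\rho(\xi)\ge 1-a_1-b_1>0$. The symmetric argument with $g_2(u)=\frac{(1+d_2)u}{1+d_2(u+b_2+a_2)}$, whose hypothesis $g_2'(0)>1$ is guaranteed by $a_2+b_2<1$, gives $\liminf_{\xi\to+\infty}\varrho(\xi)\ge 1-a_2-b_2>0$.

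With both limits inferior positive I would finally fix $s_1\in(0,1)$ small enough that simultaneously $s_1p^*\le\liminf_{\xi\to+\infty}\rho$, $s_1q^*\le\liminf_{\xi\to+\infty}\varrho$ and $t_1(s_1),t_2(s_1)>1$; such $s_1$ exists since all four controlling quantities are positive. Then the displayed sandwich holds, Lemma \ref{le5} provides the contracting rectangle, and Theorem \ref{th3} delivers $\lim_{\xi\to+\infty}(\rho(\xi),\varrho(\xi))=(p^*,q^*)$. I expect the main obstacle to be exactly the positivity of the two limits inferior: it cannot be read off the system directly because the dynamics is not monotone, and it forces the reduction to the scalar minorants $g_1,g_2$, in which hypothesis \eqref{5.9} enters essentially (it is precisely what keeps each linearisation $g_i'(0)$ above $1$, hence the minorant's spreading speed positive). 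A secondary technical point, handled routinely once the minorants are in place, is the transfer from the moving-frame spreading estimate of Lemma \ref{le3} to a fixed spatial window.
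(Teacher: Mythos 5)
Your proposal is correct and follows essentially the same route as the paper: bound the wave components above by $1$, minorize each equation by the monotone scalar recursion with birth function $g_i(u)=\frac{(1+d_i)u}{1+d_i(u+b_i+a_i)}$, invoke Lemma \ref{le3} (comparison plus spreading) and the invariance of the wave profile to get $\liminf_{\xi\to+\infty}\rho\ge 1-a_1-b_1$ and $\liminf_{\xi\to+\infty}\varrho\ge 1-a_2-b_2$, and then feed the resulting sandwich at some $s_1\in(0,1)$ into Theorem \ref{th3} via the contracting rectangle of Lemma \ref{le5}. Your explicit verification of (b1)--(b5) for $g_i$ and of the upper inequalities $\limsup\le 1<t_i(s_1)$ only makes explicit what the paper leaves implicit.
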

\begin{proof}
It is clear that $p_{n}(x)=\rho (\xi )$ satisfies
\[
\begin{cases}
p_{n+1}(x)\ge \int_{\mathbb{R}}\frac{(1+d_{1})p_{n}(x-y)}{%
1+d_{1}(p_{n}(x-y)+b_{1}+a_{1})}k_{1}(y)dy,x\in \mathbb{R},n=0,1,2,\cdots
,\\
p_{0}(x)=\rho (x)>0,x\in \mathbb{R}.
\end{cases}
\]%
Consider the initial value problem%
\[
\begin{cases}
w_{n+1}(x)=\int_{\mathbb{R}}\frac{(1+d_{1})w_{n}(x-y)}{%
1+d_{1}(w_{n}(x-y)+b_{1}+a_{1})}k_{1}(y)dy,x\in \mathbb{R},n=0,1,2,\cdots
,\\
w_{0}(x)=\rho (x)>0,x\in \mathbb{R}.
\end{cases}
\]%
Because $\frac{(1+d_{1})w}{1+d_{1}(w+b_{1}+a_{1})}$ is monotone increasing
in $w\geq 0,$ then Lemma \ref{le3} implies that
\[
\liminf_{n\rightarrow +\infty }\inf_{|x|<2c}w_{n}(x)\geq 1-a_{1}-b_{1},
\]
and
\[
w_n(x)\le p_n(x), n\in\mathbb{N}, x\in \mathbb{R},
\]
which further leads to
\[
\liminf_{\xi \rightarrow +\infty }\rho (\xi )= \liminf_{n\rightarrow +\infty }\inf_{|x|<2c}p_{n}(x)\geq \liminf_{n\rightarrow +\infty }\inf_{|x|<2c}w_{n}(x)\geq 1-a_{1}-b_{1}>0
\]%
by the invariant form of traveling wave solutions.

In a similar way, we have
\[
\liminf_{\xi \rightarrow +\infty }\varrho (\xi )\geq 1-a_{2}-b_{2}.
\]

By what we have done, there exists $s_0 \in (0,1)$ such that
\[
r_1(s_0)< \liminf_{\xi\to + \infty} \rho (\xi)\le \limsup_{\xi\to + \infty} \rho
(\xi) < t_1(s_0)
\]
and
\[
r_2(s_0)< \liminf_{\xi\to + \infty} \varrho (\xi)\le \limsup_{\xi\to + \infty}
\varrho (\xi) < t_2(s_0).
\]

Using Theorem \ref{th3} and Lemma \ref{le5}, we complete the proof.
\end{proof}

In fact, any bounded positive traveling wave solutions of \eqref{5.8} admit the following nice property.
\begin{lemma}\label{le8.0}
Assume that $(\rho(\xi), \varrho(\xi))$ is a bounded positive solution  of \eqref{5.8}. Then
\[
0< \rho (\xi), \varrho (\xi) \le  1, \xi\in\mathbb{R}.
\]
\end{lemma}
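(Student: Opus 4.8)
The plan is to establish only the upper bound $\rho(\xi)\le 1$ and $\varrho(\xi)\le 1$ by a supremum argument that exploits the nonnegativity of all parameters; the strict positivity in the conclusion is part of the hypothesis (or recoverable from (k1) exactly as at the end of the proof of Theorem \ref{th5}). First I would set $\overline{\rho}=\sup_{\xi\in\mathbb{R}}\rho(\xi)$ and $\overline{\varrho}=\sup_{\xi\in\mathbb{R}}\varrho(\xi)$, both finite since $(\rho,\varrho)$ is assumed bounded. Because $b_1,a_1\ge 0$ and $\rho,\varrho\ge 0$, discarding the nonnegative terms $b_1\rho(\xi-y-c)$ and $a_1\varrho(\xi-y)$ from the denominator of the first equation of \eqref{5.8} only enlarges the integrand, so
\[
\rho(\xi+c)\le \int_{\mathbb{R}} \frac{(1+d_1)\rho(\xi-y)}{1+d_1\rho(\xi-y)}\, k_1(y)\,dy,\qquad \xi\in\mathbb{R}.
\]

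Next I would invoke the monotonicity of the scalar map $g(u)=\frac{(1+d_1)u}{1+d_1 u}$, which is increasing on $[0,\infty)$ since $g'(u)=\frac{1+d_1}{(1+d_1 u)^2}>0$. Hence $g(\rho(\xi-y))\le g(\overline{\rho})$ for every $y$, and using $\int_{\mathbb{R}}k_1(y)\,dy=1$ from (k3) this gives the pointwise bound
\[
\rho(\xi+c)\le g(\overline{\rho})=\frac{(1+d_1)\overline{\rho}}{1+d_1\overline{\rho}},\qquad \xi\in\mathbb{R}.
\]
Taking the supremum over $\xi$ on the left yields $\overline{\rho}\le \frac{(1+d_1)\overline{\rho}}{1+d_1\overline{\rho}}$. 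Multiplying through by $(1+d_1\overline{\rho})/\overline{\rho}>0$ (this is where $\overline{\rho}>0$ from positivity and $d_1>0$ from \eqref{5.7} are used) reduces to $1+d_1\overline{\rho}\le 1+d_1$, i.e. $\overline{\rho}\le 1$. The identical computation applied to the second equation, with $a_2,b_2\ge 0$ and $d_2>0$, gives $\overline{\varrho}\le 1$.

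For the strict positivity, if $\rho(\xi_0+c)=0$ for some $\xi_0$, then the nonnegative integrand in the first equation of \eqref{5.8} must vanish $k_1$-almost everywhere, whence $\rho(\xi_0-y)=0$ on the support of $k_1$; as in Theorem \ref{th5}, (k1) then propagates this to $\rho\equiv 0$, contradicting positivity, and similarly for $\varrho$. Combining the two bounds gives $0<\rho(\xi),\varrho(\xi)\le 1$ for all $\xi\in\mathbb{R}$. The main obstacle, or rather the point requiring care, is the passage to the supremum: since the inequality $\rho(\xi+c)\le g(\overline{\rho})$ holds pointwise and $g$ is monotone, I never need the supremum to be attained, so the argument closes cleanly without compactness. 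The only genuinely load-bearing algebraic step is the implication $\overline{\rho}\le g(\overline{\rho})\Rightarrow \overline{\rho}\le 1$, which is elementary but is exactly the place where the standing assumption $d_1,d_2>0$ cannot be dispensed with.
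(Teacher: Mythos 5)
Your proposal is correct and follows essentially the same route as the paper: both drop the nonnegative terms $b_1\rho(\cdot-c)+a_1\varrho(\cdot)$ from the denominator, use the monotonicity of $u\mapsto\frac{(1+d_1)u}{1+d_1u}$ together with $\int_{\mathbb{R}}k_1(y)\,dy=1$ to obtain $\sup_{\xi}\rho(\xi)\le\frac{(1+d_1)\sup_{\xi}\rho(\xi)}{1+d_1\sup_{\xi}\rho(\xi)}$, and conclude $\sup_{\xi}\rho(\xi)\le 1$ by the same elementary algebra (the paper's appeal to dominated convergence is inessential and your pointwise bound sidesteps it). The strict positivity step via (k1) is also the paper's argument, stated at the same level of detail.
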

\begin{proof}
Since $k_1, k_2$ are Lebesgue integrable, and
\[
\frac{(1+d_{1})u_1}{1+d_{1}(u_1+b_{1}v_1+a_{1}w_1)}=0,\frac{(1+d_{2})u_2}{%
1+d_{2}(u_2+b_{2}v_2+a_{2}w_2)}=0
\]
for $u_i,v_i,w_i\in \lbrack 0,\infty ), i=1,2,$ if and only if $u_1=u_2=0,$ then $\rho (\xi_1)=0$ for some $\xi_1$ implies that $\rho (\xi)\equiv 0$ for all $ \xi\in\mathbb{R}.$ Thus, $\rho (\xi) >0, \xi\in\mathbb{R}.$ By the dominated convergence theorem and the monotonicity,
\[
\sup_{\xi \in \mathbb{R}}\rho (\xi )\leq \sup_{\xi \in \mathbb{R}}\frac{%
(1+d_{1})\rho (\xi )}{1+d_{1}(\rho (\xi )+b_{1}\rho (\xi -c)+a_{1}\varrho
(\xi ))}\leq \frac{(1+d_{1})\sup_{\xi \in \mathbb{R}}\rho (\xi )}{%
1+d_{1}\sup_{\xi \in \mathbb{R}}\rho (\xi )},
\]
which implies $\sup_{\xi \in \mathbb{R}}\rho (\xi )\in (0,1].$

Similarly, we can obtain the boundedness of $\varrho (\xi).$ The proof is complete.
\end{proof}

Combining Theorem \ref{th6} with Lemma \ref{le8.0}, we further have the following conclusion.
\begin{corollary}
Assume that $(\rho (\xi),\varrho (\xi) )$ is a bounded positive solution of \eqref{5.8}. If \eqref{5.9} holds, then
$
\lim_{\xi\to + \infty}(\rho (\xi), \varrho (\xi))=(p^*,q^*).
$
\end{corollary}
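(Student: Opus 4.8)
The plan is to show that the only features of the solution used in the proof of Theorem \ref{th6} are positivity together with the uniform bound by $1$, and then to quote Lemma \ref{le8.0} to supply these for an arbitrary bounded positive solution of \eqref{5.8}. First I would invoke Lemma \ref{le8.0} to obtain $0<\rho(\xi),\varrho(\xi)\le 1$ for all $\xi\in\mathbb{R}$. Since the solution is bounded it is also uniformly continuous, so $\rho$ and $\varrho$ qualify as admissible initial data for the scalar problems appearing in Lemma \ref{le3}.

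With this bound in hand I would re-run the comparison step of Theorem \ref{th6}. Writing $p_n(x)=\rho(x+cn)$ and using $p_{n-1}(x-y)\le 1$ and $q_n(x-y)\le 1$, the monotonicity of $\frac{(1+d_1)w}{1+d_1(w+b_1+a_1)}$ in $w\ge 0$ shows that $\rho$ is a supersolution of the scalar monotone integro-difference equation carrying this birth function. Lemma \ref{le3} then yields $\liminf_{\xi\to+\infty}\rho(\xi)\ge 1-a_1-b_1$, which is strictly positive by \eqref{5.9}; the symmetric argument for $\varrho$ gives $\liminf_{\xi\to+\infty}\varrho(\xi)\ge 1-a_2-b_2>0$. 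Combined with $\limsup_{\xi\to+\infty}\rho(\xi)\le 1$ and $\limsup_{\xi\to+\infty}\varrho(\xi)\le 1$, this locates both limit sets strictly inside a rectangle with positive lower corner.

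Finally, because the liminfs are positive and the limsups are bounded by $1$, I would choose $s_0\in(0,1)$ so that $r_i(s_0)<\liminf_{\xi\to+\infty}$ and $\limsup_{\xi\to+\infty}<t_i(s_0)$ hold for $i=1,2$ with the functions $r_i,t_i$ of Lemma \ref{le5}, which furnish a contracting rectangle of \eqref{5.90}. Theorem \ref{th3} then forces $\lim_{\xi\to+\infty}(\rho(\xi),\varrho(\xi))=(p^*,q^*)$. The one point requiring care, and the main obstacle, is to verify that no step of Theorem \ref{th6} quietly used the decay $\lim_{\xi\to-\infty}\rho=\lim_{\xi\to-\infty}\varrho=0$ or the explicit upper and lower solutions of Theorem \ref{th5}: the initial-value comparison through Lemma \ref{le3} needs only a positive, uniformly continuous datum bounded by $1$, so the whole argument depends on the solution solely through the conclusion of Lemma \ref{le8.0}.
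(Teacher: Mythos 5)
Your proposal is correct and follows exactly the paper's route: the paper obtains this corollary precisely by noting that the proof of Theorem \ref{th6} uses the solution only through positivity and the bound $0<\rho,\varrho\le 1$ (via the comparison with Lemma \ref{le3} and the contracting rectangle of Lemma \ref{le5}), and Lemma \ref{le8.0} supplies these properties for any bounded positive solution of \eqref{5.8}. Your careful check that no step of Theorem \ref{th6} secretly uses the decay at $-\infty$ or the constructed upper and lower solutions is exactly the content the paper compresses into the phrase ``Combining Theorem \ref{th6} with Lemma \ref{le8.0}.''
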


\begin{theorem}\label{th5.}
If $c<c^*,$ then \eqref{5.8} does not have positive solutions satisfying
\begin{equation}\label{5.a}
\lim_{\xi\to -\infty}\rho (\xi)=\lim_{\xi\to -\infty}\varrho (\xi)=0, \liminf_{\xi\to + \infty}\rho (\xi) >0, \liminf_{\xi\to + \infty}\varrho (\xi) >0.
\end{equation}
\end{theorem}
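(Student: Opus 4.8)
The plan is to argue by contradiction and reduce nonexistence to a spectral obstruction of the linearization at the origin, encoded by $\Theta_1$. Since $c^*$ is the maximum of the two linear speeds, after interchanging the roles of the two species I may assume $c^*$ is attained by the first equation; then $c<c^*$ gives $c<\inf_{\lambda>0}\lambda^{-1}\ln\bigl((1+d_1)\int_{\mathbb{R}}e^{\lambda y}k_1(y)dy\bigr)$, which is exactly the statement that $\Theta_1(\lambda,c)=(1+d_1)e^{-\lambda c}\int_{\mathbb{R}}e^{\lambda y}k_1(y)dy>1$ for every $\lambda\ge0$ (the case $\lambda=0$ being $\Theta_1(0,c)=1+d_1>1$); this is the half of Lemma \ref{le6} I will use. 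Assuming for contradiction that \eqref{5.8} has a positive solution $(\rho,\varrho)$ obeying \eqref{5.a}, Lemma \ref{le8.0} yields $0<\rho,\varrho\le1$, so both components are bounded and, by \eqref{5.a}, tend to $0$ as $\xi\to-\infty$.

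The heart of the argument is to show that $c<c^*$ is incompatible with the decay $\rho(-\infty)=0$. Because the competition and delay contributions vanish to higher order at the origin, I first linearize the first equation of \eqref{5.8} near $-\infty$, using the elementary bound $\frac{1}{1+a}\ge1-a$ as in the proof of Theorem \ref{th5}:
\[
\frac{(1+d_1)\rho(\xi-y)}{1+d_1(\rho(\xi-y)+b_1\rho(\xi-y-c)+a_1\varrho(\xi-y))}\ge(1+d_1)\rho(\xi-y)\bigl(1-d_1(\rho(\xi-y)+b_1\rho(\xi-y-c)+a_1\varrho(\xi-y))\bigr).
\]
Given $\epsilon>0$, choose $X$ with $\rho,\varrho<\epsilon$ on $(-\infty,X]$; splitting the $y$-integral into the region $\xi-y\le X$, where the correction is $O(\epsilon)$, and the complementary region, whose $k_1$-mass is a tail I control with the finite exponential moments (k3), I obtain an essentially linear lower bound $\rho(\xi+c)\ge(1-o(1))(1+d_1)\int_{\mathbb{R}}\rho(\xi-y)k_1(y)dy$ as $\xi\to-\infty$. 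The decisive feature is the factor $1+d_1>1$ multiplying a probability-kernel average: no exponential profile $e^{\lambda\xi}$ is consistent with it, precisely because its characteristic value is $\Theta_1(\lambda,c)$, which exceeds $1$ for all $\lambda\ge0$.

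To turn this into a contradiction I will apply a one-sided Laplace transform / generating-function argument in the spirit of the nonexistence proofs for \eqref{2} in Hsu and Zhao \cite{hsuzhao}. Let $\lambda^*=\sup\{\lambda>0:\int_{-\infty}^0\rho(\xi)e^{-\lambda\xi}d\xi<\infty\}$ be the abscissa of convergence of the transform of the left tail. Feeding the linear lower bound into the transform multiplies $\rho$ by the symbol $\Theta_1(\lambda,c)$; since $\Theta_1(\lambda,c)\ge 1+\delta_0$ for all $\lambda\ge0$ with $\delta_0=\min_{\lambda\ge0}\Theta_1(\lambda,c)-1>0$, the transform cannot remain finite as $\lambda\uparrow\lambda^*$ unless $\lambda^*$ is a root of $\Theta_1(\cdot,c)=1$, which does not exist; this rules out $0<\lambda^*<\infty$ and, letting $\lambda\to\infty$ where $\Theta_1\to\infty$, also $\lambda^*=+\infty$. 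The remaining possibility $\lambda^*=0$ means $\rho$ decays slower than every exponential; integrating the linear lower bound once gives, for $U(a)=\int_{-\infty}^a\rho$, the renewal inequality $U(a+c)\ge(1-o(1))(1+d_1)(k_1*U)(a)$, and the monotonicity of $U$ together with $1+d_1>1$ forces $\rho$ to stay bounded away from $0$ along some sequence $\xi\to-\infty$, contradicting $\rho(-\infty)=0$.

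The main obstacle is twofold. First, because $\liminf_{\xi\to+\infty}\rho>0$ the two-sided transform diverges at $+\infty$, so I must work with the one-sided transform of the left tail and justify, via a Tauberian/Ikehara-type argument, that the only admissible behaviour at $-\infty$ is exponential with a rate solving $\Theta_1(\lambda,c)=1$. Second, the nonlocality means the linearization error cannot simply be dropped on a half-line; the delicate point is the uniform control of the kernel-tail contribution from the region where $\rho,\varrho$ are not yet small, which is exactly where (k2) and (k3) enter. Once these two points are handled, the exclusion of every decay rate contradicts $\rho\not\equiv0$ together with $\rho(-\infty)=0$, completing the proof; the argument is identical for $\varrho$ when $c^*$ is attained by the second component.
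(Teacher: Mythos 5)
Your spectral starting point (after relabeling, $\Theta_1(\lambda,c)>1$ for all $\lambda\ge 0$ when $c<c^*$, by Lemma \ref{le6}) and your linearized lower bound near $-\infty$ are sound and parallel the paper's claims (D1)--(D2), but the Laplace-transform core of your argument has a genuine gap. From a one-sided \emph{lower} bound $\rho(\xi+c)\ge(1-\epsilon)(1+d_1)\int\rho(\xi-y)k_1(y)\,dy$ (valid on a left half-line), multiplying by $e^{-\lambda\xi}$ and integrating gives, via Tonelli, only an inequality of the form $V(\lambda)\bigl[(1-\epsilon)\Theta_1(\lambda,c)-1\bigr]\le C(\lambda)$, where $V(\lambda)=\int_{-\infty}^{X}\rho(\xi)e^{-\lambda\xi}d\xi$ and $C(\lambda)$ is a finite boundary term. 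Since $\inf_{\lambda\ge 0}\Theta_1(\lambda,c)>1$, monotone convergence then yields $V(\lambda^*)<\infty$ --- which is \emph{not} a contradiction (a Laplace transform may be finite at its abscissa of convergence), and for $\lambda>\lambda^*$ both sides of the Tonelli relation are $+\infty$, so nothing forces $\Theta_1(\lambda^*,c)=1$. The assertion ``the abscissa of convergence must be a root of the characteristic equation'' is a Carr--Chmaj/Widder-type statement that requires the \emph{equation} together with two-sided control of the nonlinear remainder (hence finiteness of the transform of terms like $\rho^2$ and $\rho\varrho$, i.e.\ prior exponential decay of both components), none of which you establish; moreover it presupposes $\lambda^*>0$. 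Your treatment of the case $\lambda^*=0$ is also flawed: if $\rho$ decays subexponentially, $U(a)=\int_{-\infty}^a\rho$ need not even be finite, and when it is, symmetry of $k_1$ plus monotonicity of $U$ does not give $(k_1*U)(a)\ge U(a)$ (that would need convexity of $U$), so your renewal inequality does not force $\rho$ to stay bounded away from $0$ along a sequence $\xi\to-\infty$.

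The paper avoids all of this by never taking transforms. Its claim (D2) --- $b_1\rho(\xi-y-c)+a_1\varrho(\xi-y)\le\max\{\varepsilon,(M-1)\rho(\xi-y)\}$, which is exactly where $\liminf_{\xi\to+\infty}\rho>0$ and Lemma \ref{le8.0} enter --- turns the first equation of \eqref{5.8} into a \emph{global} comparison with the monotone scalar recursion $p_{n+1}(x)\ge\int\frac{(1+d_1)p_n(x-y)}{1+d_1(Mp_n(x-y)+(a_1+b_1)\epsilon_1)}k_1(y)dy$, whose linear speed still exceeds $\tfrac{c_2+c^*}{2}$ by (D1). The Hsu--Zhao spreading result (Lemma \ref{le3}) then gives $\liminf_{n\to+\infty}\inf_{|x|=c_3n}p_n(x)>0$ with $c_3=\tfrac{2c_2+c^*}{3}>c_2$, while along $-x=c_3n$ the wave coordinate $\xi=(c_2-c_3)n\to-\infty$ forces $p_n(x)\to 0$ by \eqref{5.a}: contradiction. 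In other words, the ``growth beats transport'' mechanism you are trying to extract from the transform is imported wholesale from Lemma \ref{le3}. To salvage your route you would have to either carry out the full Carr--Chmaj continuation argument (after separately proving exponential decay of $\rho$ and $\varrho$ at $-\infty$) or run an iteration/large-deviation argument on the linearized inequality --- either of which essentially amounts to re-proving the spreading property that the paper simply cites.
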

\begin{proof}
Without loss of generality, we assume that
\[
c^*=  \inf_{\lambda >0}\frac{\ln \left((1+d_1)\int_{\mathbb{R}}e^{\lambda y}k_1(y)dy\right)}{\lambda}.
\]

We first give two claims as follows:
\begin{description}
\item[(D1)]  for any given $c_1<c^*,$ there exists $\epsilon_0 >0$ such that
\[
\Theta_1^{\epsilon}(\lambda, c) >1\text{ for any } \epsilon \in (0, \epsilon_0), c<c_1, \lambda >0,
\]
where
\[
\Theta_1^{\epsilon}(\lambda, c)=\frac{1+d_1}{1+d_1\epsilon(a_1+b_1)}\int_{\mathbb{R}}e^{\lambda y- \lambda c} k_1(y)dy;
\]
\item[(D2)] assume that $\rho(\xi)>0,\varrho(\xi)>0,\xi \in\mathbb{R}$ satisfy \eqref{5.a}, then  for any given $\varepsilon >0,$ there exists $M=M(\varepsilon, \rho, \varrho)>1$ such that
\[
b_1\rho (\xi -y-c)+a_1\varrho (\xi-y)\le \max\{ \varepsilon, (M-1)\rho (\xi -y)\}, \xi-y\in \mathbb{R}.
\]
\end{description}
In fact, (D1) is evident by the definition of $c^*.$ For any $\varepsilon >0,$
the limit behavior \eqref{5.a} implies that there exists $T<0$ such that
\[
b_1\rho (\xi -y-c)+a_1\varrho (\xi-y)\le \varepsilon, \xi -y \le  T + 2c.
\]
When $\xi -y \ge T,$ then
the properties of continuous functions and $\liminf_{\xi\to + \infty}\rho (\xi) >0$ indicate that there exists $\delta >0$ with
$\inf_{\xi -y \ge T}\rho (\xi-y) > \delta.$
By Lemma \ref{le8.0}, we can take $M$ satisfying
$(M-1)\delta > b_1+ a_1$
and
\[
b_1\rho (\xi -y-c)+a_1\varrho (\xi-y)\le (M-1)\rho (\xi-y), \xi -y \ge T +c,
\]
and we obtain (D2).

Were the statement false, then there exists $c_2< c^*$ such that  \eqref{5.8} with $c=c_2$ has a (fixed) positive solution $(\rho, \varrho)$ satisfying \eqref{5.a}.  Let $\epsilon_1>0$ be small such that
\[
\Theta_1^{2\epsilon}(\lambda, c) >1\text{ for any } \epsilon \in (0, \epsilon_1), c<\frac{c_2+c^*}{2}, \lambda >0.
\]
At the same time, (D2) indicates that there exists $M>1$ such that
\[
\rho (\xi +c_2)\ge \int_{\mathbb{R}}
\frac{(1+d_1)\rho (\xi -y)}{1+d_1((a_1+b_1)\epsilon_1+M\rho (\xi -y))}k_1(y)dy,
\]
which leads to
\[
p_{n+1}(x)\ge  \int_{\mathbb{R}}
\frac{(1+d_1)p_n(x-y)}{1+d_1(M p_n(x-y)+(b_1+a_1)\epsilon_1 )}k_1(y)dy,
\]
and $p_0(x)>0,x\in \mathbb{R}.$

Let $c_3=\frac{2c_2+c^*}{3} < \frac{c_2+c^*}{2},$ then $c_3>c_2.$ Using Lemma \ref{le3}, we obtain
\[
\liminf_{n\to + \infty}\inf_{|x|=c_3 n} p_n(x) \ge \frac{1-(a_1+b_1)\epsilon_1}{M}>0.
\]
However, if $-x=c_3 n$, then $\xi=(c_2-c_3)n\to -\infty$ as $ n\to + \infty ,$ and \eqref{5.a} implies that
\[
\limsup_{n\to + \infty}\sup_{-x=c_3 n} p_n(x) =0,
\]
which implies a contradiction. The proof is complete.
\end{proof}

\subsection{$m$ Species Competition System}
\noindent

We now consider the traveling wave solutions of \eqref{1} if
\begin{equation}
P_{i}=\frac{\left( 1+d_{i}\right) u_{n}^{i}}{1+d_{i}\left(
u_{n}^{i}+\sum_{j\in J^{\prime }}e_{j}^{i}u_{n-j}^{i}+\sum_{j\in J,l\in
I,l\neq i}f_{lj}^{i}u_{n-j+1}^{l}\right) },  \label{n1}
\end{equation}
in which all the parameters are nonnegative and $d_{i}>0,i\in I,J^{\prime
}=\{1,2,\cdots ,\tau-1\}.$ If
\begin{equation}
\sum_{j\in J^{\prime }}e_{j}^{i}+\sum_{j\in J,l\in I,l\neq i}f_{lj}^{i}<1,
\label{n2}
\end{equation}%
then \eqref{1} with \eqref{n1} has a spatially homogeneous steady state
\[
\mathbf{E}=(E_{1},\cdots ,E_{m})\gg \mathbf{0}
\]%
with%
\[
E_{i}+\sum_{j\in J^{\prime }}e_{j}^{i}E_{i}+\sum_{j\in J,l\in I,l\neq
i}f_{lj}^{i}E_{l}=1,i\in I.
\]%
Let $\Phi =(\phi _{1},\phi _{2},\cdots ,\phi _{m})$ with
$
u_{n}^{i}(x)=\phi _{i}(\xi ),\xi =x+cn,i\in I,
$
be a traveling wave solution of \eqref{1} with \eqref{n1}, then $\Phi (\xi )$
and $c$ satisfy
\begin{equation}
\phi _{i}(\xi )=\int_{\mathbb{R}}\frac{\left( 1+d_{i}\right) \phi
_{i}(y)k_{i}(\xi -c-y)}{1+d_{i}\left( \phi _{i}(y)+\sum_{j\in J^{\prime
}}e_{j}^{i}\phi _{i}(y-cj)+\sum_{j\in J,l\in I,l\neq i}f_{lj}^{i}\phi
_{l}(y-cj+c)\right) }dy  \label{tt}
\end{equation}%
for all $i\in I,\xi \in \mathbb{R}.$

When $\lambda \ge 0, c\ge 0, $ define
\[
\Lambda_i(\lambda, c)=(1+d_{i})\int_{\mathbb{R}}e^{\lambda y- \lambda c} k_i(y)dy, i\in I.
\]
\begin{lemma}
Define
\[
C= \max_{i\in I}\left\{\inf_{\lambda >0}\frac{\ln \left((1+d_{i})\int_{\mathbb{R}}e^{\lambda y}k_i(y)dy\right)}{\lambda}\right\}.
\]
\begin{description}
\item[(R1)] If $c>C,$ then for each $i\in I,$ there exists $\lambda_i^c>0$ satisfying
$
\Lambda_i(\lambda_i^c, c)=1;
$
\item[(R2)] for each fixed $c>C,$ there exists $\eta \in (1,2)$ such that
\[
\Lambda_i(\lambda, c)>1, \lambda \in (0, \lambda_i^c) \text{ and } \Lambda_i(\lambda, c)<1, \lambda \in (\lambda_i^c, \eta\lambda_i^c), i\in I;
\]
\item[(R3)] when $c\in (0,C),$ there exists $i_0\in I$ such that $\Lambda_{i_0}(\lambda, c)>1, \lambda >0.$
\end{description}
\end{lemma}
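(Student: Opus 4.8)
The plan is to reduce the whole statement to the convexity of the logarithmic moment generating function of each kernel. For each $i\in I$ set
\[
g_i(\lambda)=\int_{\mathbb{R}}e^{\lambda y}k_i(y)\,dy,\qquad G_i(\lambda)=\ln\bigl((1+d_i)g_i(\lambda)\bigr),\qquad \lambda\ge 0,
\]
so that $\Lambda_i(\lambda,c)=e^{G_i(\lambda)-\lambda c}$; consequently $\Lambda_i(\lambda,c)=1$, $>1$, $<1$ are equivalent to $G_i(\lambda)=\lambda c$, $G_i(\lambda)>\lambda c$, $G_i(\lambda)<\lambda c$, respectively. First I would record the elementary properties of $G_i$. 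By (k3) the function $g_i$ is finite and smooth on $[0,\infty)$ with $g_i(0)=1$, and $\ln g_i$ is the cumulant generating function, hence strictly convex since $k_i$ is a non-degenerate probability density; thus $G_i$ is strictly convex. By the symmetry (k2), $g_i(\lambda)=\int_{\mathbb{R}}\cosh(\lambda y)k_i(y)\,dy\ge 1$ and $g_i'(\lambda)=2\int_0^{\infty}y\sinh(\lambda y)k_i(y)\,dy\ge 0$, so $G_i$ is nondecreasing with $G_i(0)=\ln(1+d_i)>0$ and $G_i'(0)=0$, and $G_i(\lambda)\to+\infty$ as $\lambda\to+\infty$ because $k_i$ carries mass away from the origin.

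Next I would study $\psi_i(\lambda)=G_i(\lambda)/\lambda$ on $(0,\infty)$, whose infimum is the $i$-th term in the definition of $C$. Since $G_i(0)>0$ we have $\psi_i(\lambda)\to+\infty$ as $\lambda\to 0^+$, while $\psi_i(\lambda)\to L_i:=\lim_{\lambda\to\infty}G_i'(\lambda)>0$ as $\lambda\to+\infty$ (allowing $L_i=+\infty$). As $\psi_i$ is continuous and strictly positive on $(0,\infty)$, any minimizing sequence must stay in a compact subinterval, so the infimum $c_i^*=\inf_{\lambda>0}\psi_i(\lambda)$ is strictly positive; hence $C=\max_{i\in I}c_i^*>0$, which is what makes the interval in (R3) meaningful. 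For the shape of $\psi_i$ one may note that $\operatorname{sign}\psi_i'=\operatorname{sign}N_i$ with $N_i(\lambda)=\lambda G_i'(\lambda)-G_i(\lambda)$, where $N_i(0)=-G_i(0)<0$ and $N_i'(\lambda)=\lambda G_i''(\lambda)>0$, so $N_i$ is strictly increasing and $\psi_i$ decreases near the origin; this is precisely the convexity mechanism behind the cited Lemma~\ref{le6} and \cite[Lemma 3.8]{liang}.

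With these preliminaries the three assertions follow by comparing the line $\lambda\mapsto\lambda c$ with the strictly convex curve $G_i$. If $c<c_i^*$ then $\psi_i(\lambda)>c$, i.e. $G_i(\lambda)>\lambda c$, for every $\lambda>0$, so $\Lambda_i(\lambda,c)>1$ for all $\lambda>0$; applying this to an index $i_0$ attaining the finite maximum $C=c_{i_0}^*$ yields (R3) for any $c\in(0,C)$. If instead $c>c_i^*$, then by definition of the infimum there is $\lambda_0>0$ with $G_i(\lambda_0)<\lambda_0 c$, so the convex function $\Phi_i(\lambda):=G_i(\lambda)-\lambda c$, which satisfies $\Phi_i(0)=\ln(1+d_i)>0$, is negative somewhere; its negativity set is a nonempty open interval $(\lambda_{i1}^c,\lambda_{i2}^c)$ with $0<\lambda_{i1}^c<\lambda_{i2}^c\le+\infty$. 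Setting $\lambda_i^c:=\lambda_{i1}^c$ gives a positive root of $\Lambda_i(\cdot,c)=1$, proving (R1); moreover $\Phi_i>0$ on $(0,\lambda_i^c)$ and $\Phi_i<0$ on $(\lambda_i^c,\lambda_{i2}^c)$, the crossing at $\lambda_i^c$ being transversal since a tangency would force $\Phi_i\ge 0$ nearby and contradict $\Phi_i<0$ just to the right. This translates into $\Lambda_i>1$ on $(0,\lambda_i^c)$ and $\Lambda_i<1$ on $(\lambda_i^c,\lambda_{i2}^c)$.

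Finally, since $c>C$ forces $c>c_i^*$ for every $i\in I$, each smaller root $\lambda_i^c$ exists and the local description holds for all $i$ at once. To extract the single $\eta\in(1,2)$ required in (R2), I would use that each ratio $\lambda_{i2}^c/\lambda_i^c>1$ (read as $+\infty$ when $\lambda_{i2}^c=+\infty$) and that $I$ is finite, so one may pick any
\[
\eta\in\Bigl(1,\ \min\bigl\{2,\ \min_{i\in I}\lambda_{i2}^c/\lambda_i^c\bigr\}\Bigr),
\]
an interval that is nonempty; for this $\eta$ one has $\eta\lambda_i^c\le\lambda_{i2}^c$, whence $\Lambda_i<1$ throughout $(\lambda_i^c,\eta\lambda_i^c)$ for every $i$, giving (R2). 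The main obstacle is the second step, namely certifying $c_i^*>0$ (hence $C>0$) and controlling the global shape of $\psi_i$; everything after that is just a transversal sign change of a strictly convex function. That obstacle is dispatched by the boundary behaviour $\psi_i(0^+)=+\infty$, $\psi_i(+\infty)=L_i>0$ together with the continuity and strict positivity of $G_i$, exactly the convexity argument used without proof for Lemma~\ref{le6}.
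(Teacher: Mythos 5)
Your proof is correct: reducing (R1)--(R3) to the sign of the strictly convex function $G_i(\lambda)-\lambda c$, using the boundary behaviour $G_i(\lambda)/\lambda\to+\infty$ as $\lambda\to 0^+$ and $G_i(\lambda)/\lambda\to L_i>0$ as $\lambda\to+\infty$ to get $c_i^*>0$, and then choosing $\eta$ below the finite minimum of the ratios $\lambda_{i2}^c/\lambda_{i}^c$ over the finite index set $I$ establishes all three assertions. This is precisely the convexity-of-the-moment-generating-function argument the paper itself relies on but never writes out (the lemma is stated without proof, with the analogous scalar lemma deferred to ``properties of convex functions'' and Liang and Zhao, Lemma 3.8), so your writeup simply supplies the omitted details rather than taking a different route.
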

\begin{theorem}\label{thm1}
If $c>C,$ then \eqref{tt}  has a bounded positive solution satisfying
\[
0< \phi_i(\xi) <1, \xi\in \mathbb{R}, \lim_{\xi\to - \infty}\phi_i(\xi)e^{-\lambda_i^c \xi}=1, i\in I.
\]
\end{theorem}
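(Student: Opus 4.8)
The plan is to construct a pair of generalized upper and lower solutions for the system \eqref{tt} and then invoke Theorem \ref{th1} to produce a fixed point, exactly mirroring the strategy already carried out in Theorem \ref{th5} for the two-species case. For each $i\in I$, I would define
\[
\overline{\phi}_i(\xi)=\min\{e^{\lambda_i^c \xi}, 1\}, \qquad
\underline{\phi}_i(\xi)=\max\{e^{\lambda_i^c \xi}-q\,e^{\eta\lambda_i^c \xi}, 0\},
\]
where $\lambda_i^c$ is the root from (R1), $\eta\in(1,2)$ is the common exponent from (R2), and $q>1$ is a large constant to be fixed. The asymptotic $\lim_{\xi\to-\infty}\phi_i(\xi)e^{-\lambda_i^c\xi}=1$ demanded by the theorem is built directly into these profiles, since $\underline{\phi}_i(\xi)e^{-\lambda_i^c\xi}=1-q\,e^{(\eta-1)\lambda_i^c\xi}\to 1$ and $\overline{\phi}_i(\xi)e^{-\lambda_i^c\xi}=1$ as $\xi\to-\infty$.

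The key verification is Definition \ref{de2}, which I would reduce using monotonicity of the nonlinearity \eqref{n1}: the term $P_i$ is increasing in $\phi_i(y)$ and decreasing in every competing/delayed argument, so the upper-solution inequality \eqref{7} is tested against $\overline{\phi}_i$ in the ``good'' slot with all other arguments set to $0$, and the lower-solution inequality \eqref{8} is tested against $\underline{\phi}_i$ in the good slot with all other arguments set to their upper envelopes $\overline{\phi}_l$. For the upper bound, when $\overline{\phi}_i(\xi)=e^{\lambda_i^c\xi}$ the estimate follows from $\Lambda_i(\lambda_i^c,c)=1$ after dropping the denominator; when $\overline{\phi}_i(\xi)=1$ the integrand is bounded by the kernel mass. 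For the lower bound, I would use $\tfrac{1}{1+a}\ge 1-a$ to linearize the denominator, then bound the quadratic correction terms $\overline{\phi}_i^2$ and the cross terms $\overline{\phi}_i\overline{\phi}_l$ by pure exponentials $e^{2\lambda_i^c\xi}$ and $e^{(\lambda_i^c+\lambda_l^c)\xi}$; the leading term reproduces $e^{\lambda_i^c(\xi+c)}$ via $\Lambda_i(\lambda_i^c,c)=1$, and the $q$-term is controlled by $\Theta$-type integrals evaluated at the subdominant exponents. The choice of $q$ is then forced by requiring
\[
q\bigl(1-\Lambda_i(\eta\lambda_i^c,c)\bigr)>\text{(sum of quadratic-coefficient integrals)},
\]
which is solvable because $\Lambda_i(\eta\lambda_i^c,c)<1$ by (R2), so the denominator is positive.

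The main obstacle is the exponent bookkeeping: one must guarantee that all correction exponents genuinely dominate the leading exponent $\lambda_i^c$ near $-\infty$, i.e. that $\eta\lambda_i^c$, $2\lambda_i^c$, and $\lambda_i^c+\lambda_l^c$ all exceed $\lambda_i^c$ (true since $\eta>1$ and all roots are positive) \emph{and} that these exponents stay within the range where the relevant $\Lambda$-integrals converge and are subcritical. Here (R2) supplies $\Lambda_i(\eta\lambda_i^c,c)<1$ for the $q$-term, but the cross terms carry mixed exponents $\lambda_i^c+\lambda_l^c$, and one needs $\eta$ chosen small enough (as in Lemma \ref{le6}) that $\eta\lambda_i^c\le \lambda_i^c+\lambda_l^c$ for the relevant comparisons, so that the $q$-subtraction dominates every correction uniformly. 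Once the inequalities hold, Theorem \ref{th1} yields a fixed point $\Phi$ with $\underline{\Phi}\le\Phi\le\overline{\Phi}$; strict positivity $0<\phi_i(\xi)$ follows exactly as in the proof of Theorem \ref{th5}, since $P_i$ vanishes only when its good argument vanishes and (k1) then forces $\phi_i\equiv 0$, a contradiction, while $\phi_i(\xi)<1$ follows from $\overline{\phi}_i\le 1$ together with the same sup-estimate used in Lemma \ref{le8.0}.
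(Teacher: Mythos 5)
Your proposal is correct and follows essentially the same route as the paper: the same profiles $\min\{e^{\lambda_i^c\xi},1\}$ and $\max\{e^{\lambda_i^c\xi}-Ne^{\eta\lambda_i^c\xi},0\}$, the same monotonicity-based reduction to the two inequalities \eqref{sub1}--\eqref{sup1}, the same linearization $\frac{1}{1+a}\ge 1-a$ with the constant forced by $1-\Lambda_i(\eta\lambda_i^c,c)>0$, and the same appeal to Theorem \ref{th1} plus the positivity argument of Theorem \ref{th5}. Your explicit insistence that $\eta$ satisfy $\eta\lambda_i^c\le\lambda_i^c+\lambda_l^c$ for the mixed cross terms is a point the paper's $m$-species lemma leaves implicit (it is stated only in the two-species Lemma \ref{le6}), so your bookkeeping is, if anything, slightly more careful.
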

\begin{proof}
Define continuous functions as follows
\[
\overline{\phi }_{i}(\xi )=\min \{e^{\lambda _{i}^c\xi },1\},\underline{%
\phi }_{i}(\xi )=\max \{e^{\lambda _{i}^c\xi }-Ne^{\eta \lambda _{i}^c\xi
},0\},i\in I,\xi\in\mathbb{R}.
\]%
Then $(\overline{\phi }_{1}(\xi
),\cdots ,\overline{\phi }_{m}(\xi )),(\underline{\phi }_{1}(\xi ),\cdots ,%
\underline{\phi }_{1}(\xi ))$ are a pair of generalized upper and lower solutions of %
\eqref{tt} if $N>1$ is large enough.

We now verified the definition of generalized upper and lower solutions for
any uniformly continuous functions $\psi _{ij}(\xi )$ satisfying
\[
\underline{\phi }_{i}(\xi )\leq \psi _{ij}(\xi )\leq \overline{\phi }%
_{i}(\xi ),i\in I,j\in J,\xi \in \mathbb{R}.
\]%
In fact, from the monotonicity of $P_{i}$ in different variables and the
monotonicity of $\overline{\phi }_{i}(\xi )$ in $\xi\in\mathbb{R}$, it suffices to verify that%
\begin{equation}
\underline{\phi }_{i}(\xi )\leq \int_{\mathbb{R}}\frac{\left( 1+d_{i}\right)
\underline{\phi }_{i}(y)k_{i}(\xi -y-c)}{1+d_{i}\left( \underline{\phi }%
_{i}(y)+\sum_{j\in J^{\prime }}e_{j}^{i}\overline{\phi }_{i}(y)+\sum_{j\in
J,l\in I,l\neq i}f_{lj}^{i}\overline{\phi }_{l}(y)\right) }dy,\xi \in
\mathbb{R},i\in I,  \label{sub1}
\end{equation}%
and%
\begin{equation}
\int_{\mathbb{R}}\frac{\left( 1+d_{i}\right) \overline{\phi }_{i}(y)}{1+d_{i}%
\overline{\phi }_{i}(y)}k_{i}(\xi -y-c)dy\leq \overline{\phi }_{i}(\xi ),\xi
\in \mathbb{R},i\in I  \label{sup1}
\end{equation}%
because of%
\begin{eqnarray*}
&&\frac{\left( 1+d_{i}\right) \underline{\phi }_{i}(y)}{1+d_{i}\left(
\underline{\phi }_{i}(y)+\sum_{j\in J^{\prime }}e_{j}^{i}\overline{\phi }%
_{i}(y)+\sum_{j\in J,l\in I,l\neq i}f_{lj}^{i}\overline{\phi }_{l}(y)\right)
} \\
&\leq &\frac{\left( 1+d_{i}\right) \underline{\phi }_{i}(y)}{1+d_{i}\left(
\underline{\phi }_{i}(y)+\sum_{j\in J^{\prime }}e_{j}^{i}\overline{\phi }%
_{i}(y-cj)+\sum_{j\in J,l\in I,l\neq i}f_{lj}^{i}\overline{\phi }%
_{l}(y-cj+c)\right) } \\
&\leq &\frac{\left( 1+d_{i}\right) \psi _{i1}(y)}{1+d_{i}\left( \psi
_{i1}(y)+\sum_{j-1\in J^{\prime }}e_{j-1}^{i}\psi _{ij}(y-cj+c)+\sum_{j\in
J,l\in I,l\neq i}f_{lj}^{i}\psi _{lj}(y-cj+c)\right) } \\
&\leq &\frac{\left( 1+d_{i}\right) \overline{\phi }_{i}(y)}{1+d_{i}\left(
\overline{\phi }_{i}(y)+\sum_{j-1\in J^{\prime }}e_{j-1}^{i}\psi
_{ij}(y-cj+c)+\sum_{j\in J,l\in I,l\neq i}f_{lj}^{i}\psi
_{lj}(y-cj+c)\right) } \\
&\leq &\frac{\left( 1+d_{i}\right) \overline{\phi }_{i}(y)}{1+d_{i}\overline{%
\phi }_{i}(y)},y\in \mathbb{R}.
\end{eqnarray*}

If $\overline{\phi }_{i}(\xi )=1,$ then%
\[
\int_{\mathbb{R}}\frac{\left( 1+d_{i}\right) \overline{\phi }_{i}(y)}{1+d_{i}%
\overline{\phi }_{i}(y)}k_{i}(\xi -y-c)dy\leq 1=\overline{\phi }_{i}(\xi )
\]%
is evident for each $i\in I.$

If $\overline{\phi }_{i}(\xi )=e^{\lambda _{i}^{c}\xi },$ then%
\begin{eqnarray*}
\int_{\mathbb{R}}\frac{\left( 1+d_{i}\right) \overline{\phi }_{i}(y)}{%
1+d_{i}\overline{\phi }_{i}(y)}k_{i}(\xi -y-c)dy &\leq & \int_{\mathbb{R}}\left(
1+d_{i}\right) \overline{\phi }_{i}(y)k_{i}(\xi -y-c)dy \\
&\leq &\int_{\mathbb{R}}\left( 1+d_{i}\right) e^{\lambda _{i}^{c}y}k_{i}(\xi
-c-y)dy \\
&=&e^{\lambda _{i}^{c}\xi }=\overline{\phi }_{i}(\xi )
\end{eqnarray*}%
for each $i\in I.$ Therefore, \eqref{sup1} holds.

If $\underline{\phi }_{i}(\xi )=0,$ then
\[
\frac{\left( 1+d_{i}\right) \underline{\phi }_{i}(y)}{1+d_{i}\left(
\underline{\phi }_{i}(y)+\sum_{j\in J^{\prime }}e_{j}^{i}\overline{\phi }%
_{i}(y)+\sum_{j\in J,l\in I,l\neq i}f_{lj}^{i}\overline{\phi }_{l}(y)\right)
}\geq 0,y\in \mathbb{R}
\]%
and%
\[
\int_{\mathbb{R}}\frac{\left( 1+d_{i}\right) \underline{\phi }%
_{i}(y)k_{i}(\xi -y-c)}{1+d_{i}\left( \underline{\phi }_{i}(y)+\sum_{j\in
J^{\prime }}e_{j}^{i}\overline{\phi }_{i}(y)+\sum_{j\in J,l\in I,l\neq
i}f_{lj}^{i}\overline{\phi }_{l}(y)\right) }dy\geq 0=\underline{\phi }%
_{i}(\xi )
\]%
for each $i\in I.$

If $\underline{\phi }_{i}(\xi )=e^{\lambda _{i}^{c}\xi }-Ne^{\eta \lambda
_{i}^{c}\xi },$ then
$
\frac{1}{1+a}\geq 1-a,a\geq 0
$
indicates that
\begin{eqnarray*}
&&\frac{\left( 1+d_{i}\right) \underline{\phi }_{i}(y)}{1+d_{i}\left(
\underline{\phi }_{i}(y)+\sum_{j\in J^{\prime }}e_{j}^{i}\overline{\phi }%
_{i}(y)+\sum_{j\in J,l\in I,l\neq i}f_{lj}^{i}\overline{\phi }_{l}(y)\right)
} \\
&\geq &\left( 1+d_{i}\right) \underline{\phi }_{i}(y) \\
&&-d_{i}\left( 1+d_{i}\right) \underline{\phi }_{i}(y)\left( \underline{\phi
}_{i}(y)+\sum_{j\in J^{\prime }}e_{j}^{i}\overline{\phi }_{i}(y)+\sum_{j\in
J,l\in I,l\neq i}f_{lj}^{i}\overline{\phi }_{l}(y)\right)  \\
&\geq &\left( 1+d_{i}\right) \underline{\phi }_{i}(y) \\
&&-d_{i}\left( 1+d_{i}\right) e^{\lambda _{i}^{c}y}\left[ \left(
1+\sum_{j\in J^{\prime }}e_{j}^{i}\right) e^{\lambda _{i}^{c}y}+\sum_{j\in
J,l\in I,l\neq i}f_{lj}^{i}e^{\lambda _{l}^{c}y}\right]  \\
&\geq &\left( 1+d_{i}\right) \left( e^{\lambda _{i}^{c}y}-Ne^{\eta \lambda
_{i}^{c}y}\right)  \\
&&-d_{i}\left( 1+d_{i}\right) \left[ \left( 1+\sum_{j\in J^{\prime
}}e_{j}^{i}\right) e^{2\lambda _{i}^{c}y}+\sum_{j\in J,l\in I,l\neq
i}f_{lj}^{i}e^{(\lambda _{l}^{c}+\lambda _{i}^{c})y}\right]
\end{eqnarray*}%
and%
\begin{eqnarray*}
&&\int_{\mathbb{R}}\frac{\left( 1+d_{i}\right) \underline{\phi }%
_{i}(y)k_{i}(\xi -y-c)}{1+d_{i}\left( \underline{\phi }_{i}(y)+\sum_{j\in
J^{\prime }}e_{j}^{i}\overline{\phi }_{i}(y)+\sum_{j\in J,l\in I,l\neq
i}f_{lj}^{i}\overline{\phi }_{l}(y)\right) }dy \\
&\geq &\int_{\mathbb{R}}\left( 1+d_{i}\right) \left( e^{\lambda
_{i}^{c}y}-Ne^{\eta \lambda _{i}^{c}y}\right) k_{i}(\xi -c-y)dy \\
&&-d_{i}\int_{\mathbb{R}}\left( 1+d_{i}\right) \left[ \left( 1+\sum_{j\in
J^{\prime }}e_{j}^{i}\right) e^{2\lambda _{i}^{c}y}+\sum_{j\in J,l\in
I,l\neq i}f_{lj}^{i}e^{(\lambda _{l}^{c}+\lambda _{i}^{c})y}\right]
k_{i}(\xi -c-y)dy \\
&=&e^{\lambda _{i}^{c}\xi }-N\Lambda _{i}(\eta \lambda _{i}^{c},c)e^{\eta
\lambda _{i}^{c}\xi } \\
&&-d_{i}\left( 1+\sum_{j\in J^{\prime }}e_{j}^{i}\right) \Lambda
_{i}(2\lambda _{i}^{c},c)e^{2\lambda _{i}^{c}\xi }-d_{i}\sum_{j\in J,l\in
I,l\neq i}f_{lj}^{i}\Lambda _{i}(\lambda _{l}^{c}+\lambda
_{i}^{c},c)e^{(\lambda _{l}^{c}+\lambda _{i}^{c})y} \\
&\geq &\underline{\phi }_{i}(\xi )=e^{\lambda _{i}^{c}\xi }-Ne^{\eta \lambda
_{i}^{c}\xi }
\end{eqnarray*}%
for each $i\in I$ provided that
\[
N=\max_{i\in I}\left\{ \frac{d_{i}\left( 1+\sum_{j\in J^{\prime
}}e_{j}^{i}\right) \Lambda _{i}(2\lambda _{i}^{c},c)+d_{i}\sum_{j\in J,l\in
I,l\neq i}f_{lj}^{i}\Lambda _{i}(\lambda _{l}^{c}+\lambda _{i}^{c},c)}{%
1-\Lambda _{i}(\eta \lambda _{i}^{c},c)}\right\} +1,
\]
which implies \eqref{sub1} for each $i\in I$.

Therefore, we have obtained a pair of generalized upper and lower solutions.
By Theorem \ref{th1}, \eqref{tt} has a bounded positive solution satisfying
\[
0\leq \phi _{i}(\xi )\leq 1,\xi \in \mathbb{R},\lim_{\xi \rightarrow -\infty
}\phi _{i}(\xi )e^{-\lambda _{i}^{c}\xi }=1,i\in I.
\]%
Due to (k1) and similar to Theorem \ref{th5}, we further have
\[
0<\phi _{i}(\xi )<1,\xi \in \mathbb{R},i\in I.
\]%
The proof is complete.
\end{proof}
\begin{theorem}
Assume that \eqref{n2} and Theorem \ref{thm1} hold. Then
\[
\lim_{\xi\to-\infty}\phi_i (\xi)=0, \lim_{\xi\to + \infty}\phi_i (\xi)=E_i, i\in I.
\]
\end{theorem}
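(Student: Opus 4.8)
The plan is to treat the two limits separately. The limit at $-\infty$ is immediate from the hypotheses: Theorem \ref{thm1} gives $\lim_{\xi\to-\infty}\phi_i(\xi)e^{-\lambda_i^c\xi}=1$, and since $\lambda_i^c>0$ we have $\phi_i(\xi)=e^{\lambda_i^c\xi}\,(1+o(1))\to 0$ as $\xi\to-\infty$ for every $i\in I$. The substance of the theorem is the limit at $+\infty$, which I would obtain by exhibiting a contracting rectangle for the difference system \eqref{diff} with $P_i$ given by \eqref{n1} and then invoking Theorem \ref{th3}, in exact parallel with the two-species treatment in Lemma \ref{le5} and Theorem \ref{th6}.

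First I would pin down a positive lower bound for $\liminf_{\xi\to+\infty}\phi_i(\xi)$. Writing $\sigma_i=\sum_{j\in J'}e_j^i+\sum_{j\in J,l\neq i}f_{lj}^i$, assumption \eqref{n2} gives $\sigma_i<1$. Since $0<\phi_l(\xi)<1$ for all $l,\xi$ by Theorem \ref{thm1}, bounding every delayed and competing term in the denominator of \eqref{tt} by $1$ and passing to the moving coordinate $p_n(x)=\phi_i(x+cn)$ yields the supersolution inequality $p_{n+1}(x)\ge\int_{\mathbb{R}}b_i(p_n(y))k_i(x-y)dy$ with $b_i(w)=\frac{(1+d_i)w}{1+d_i(w+\sigma_i)}$. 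This scalar map is monotone increasing and bounded, satisfies $b_i(0)=0$ and $b_i'(0)=\frac{1+d_i}{1+d_i\sigma_i}>1$, and has the unique positive fixed point $w_i^\ast=1-\sigma_i$; one checks it verifies (b1)--(b5). Applying the spreading and comparison parts of Lemma \ref{le3} to $b_i$ (which is its own $\overline{b}_i=\underline{b}_i$, so $v_1=v_2=1-\sigma_i$) and then using the invariant form of the wave together with $\bigcup_{n,\,|x|<2c}\{x+cn\}\supset(0,\infty)$, exactly as in Theorem \ref{th6}, gives $\liminf_{\xi\to+\infty}\phi_i(\xi)\ge 1-\sigma_i>0$ for every $i\in I$.

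Next I would build the contracting rectangle, mimicking Lemma \ref{le5}: set $r_i(s)=sE_i$ and $t_i(s)=sE_i+(1+\epsilon)(1-s)$, with $\epsilon>0$ chosen so small that $(1+\epsilon)\sigma_i<1$ for all $i$, and take the invariant bound $M_i\ge 1+\epsilon$. Conditions (C1)--(C3) are immediate, since $E_i\le 1<1+\epsilon$ gives the required monotonicity and $\mathbf{R}(1)=\mathbf{E}=\mathbf{T}(1)$. For (C4), because $\frac{(1+d_i)u}{1+d_i(u+A)}$ is increasing in the self term $u$ and decreasing in every delayed and competing contribution $A$, the worst-case substitutions reduce the two required inequalities to $r_i(s)+\sum_{j\in J'}e_j^i t_i(s)+\sum_{j,l\neq i}f_{lj}^i t_l(s)<1$ and $t_i(s)+\sum_{j\in J'}e_j^i r_i(s)+\sum_{j,l\neq i}f_{lj}^i r_l(s)>1$. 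Using the steady-state identity $E_i+\sum_{j\in J'}e_j^i E_i+\sum_{j,l\neq i}f_{lj}^i E_l=1$, the $s$-linear parts collapse to $s$, so these become $s+(1+\epsilon)(1-s)\sigma_i<1$ and $s+(1+\epsilon)(1-s)>1$, both valid on $(0,1)$ by the choice of $\epsilon$. Hence $[r_i(s),t_i(s)]$ is a contracting rectangle of \eqref{5.90}'s $m$-species analog.

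Finally, since $1-\sigma_i\le\liminf_{\xi\to+\infty}\phi_i(\xi)\le\limsup_{\xi\to+\infty}\phi_i(\xi)\le 1$, choosing a single $s_1\in(0,1)$ small enough that $s_1E_i\le 1-\sigma_i$ and $t_i(s_1)\ge 1$ for all $i$ yields $\mathbf{R}(s_1)\le\liminf_{\xi\to+\infty}\Phi(\xi)\le\limsup_{\xi\to+\infty}\Phi(\xi)\le\mathbf{T}(s_1)$, whence Theorem \ref{th3} gives $\lim_{\xi\to+\infty}\Phi(\xi)=\mathbf{E}$, i.e. $\lim_{\xi\to+\infty}\phi_i(\xi)=E_i$. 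The main obstacle is essentially the bookkeeping in (C4): getting the monotone worst-case substitutions right and confirming that the steady-state relation together with \eqref{n2} makes both scalar inequalities strict; once that is in place, the rest transcribes the two-species argument verbatim.
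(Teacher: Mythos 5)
Your proposal is correct and follows essentially the same route as the paper: comparison with the scalar monotone recursion $b_i(w)=\frac{(1+d_i)w}{1+d_i(w+\sigma_i)}$ via Lemma \ref{le3} and the invariant form of the wave to get $\liminf_{\xi\to+\infty}\phi_i(\xi)\ge 1-\sigma_i>0$, followed by the very same contracting rectangle $r_i(s)=sE_i$, $t_i(s)=sE_i+(1+\epsilon)(1-s)$ and an appeal to Theorem \ref{th3}. You have in fact supplied the worst-case substitution and steady-state computations for (C4) that the paper compresses into ``similar to the proof of Lemma \ref{le5}'', and those computations check out.
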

\begin{proof}
Note that
\[
u_n^i(x)=\phi_i(x+cn), x\in\mathbb{R}, n=0,1,2,\cdots, i\in I
\]
satisfies
\[
u_{n+1}^i(x)\ge \int_{\mathbb{R}}\frac{(1+d_i)u_n^i(y)}{1+d_i(u_n^i(y)+ \sum_{j\in J'}e_{j}^{i}+\sum_{j\in J,l\in I,l\neq i}f_{lj}^{i})}k_i(y)dy.
\]
Then, similar to the proof of Theorem \ref{th6}, we can prove that
\[
\liminf_{\xi\to + \infty}\phi_i (\xi) \ge 1- \left( \sum_{j\in J'}e_{j}^{i}+\sum_{j\in J,l\in I,l\neq i}f_{lj}^{i} \right) >0, i\in I.
\]
Define
\[
r_i(s)=sE_i, t_i(s)=sE_i +(1-s)(1+\epsilon),i\in I
\]
with $\epsilon >0$ small enough. Similar to the proof of Lemma \ref{le5}, we can verify
that
\[
\begin{cases}
r_{i}(s)<\frac{\left( 1+d_{i}\right) r_{i}(s)}{1+d_{i}\left(
r_{i}(s)+t_{i}(s)\sum_{j\in J^{\prime }}e_{j}^{i}+\sum_{j\in J,l\in I,l\neq
i}f_{lj}^{i}t_{l}(s)\right) },s\in (0,1),\\
t_{i}(s)>\frac{\left(
1+d_{i}\right) t_{i}(s)}{1+d_{i}\left( t_{i}(s)+r_{i}(s)\sum_{j\in J^{\prime
}}e_{j}^{i}+\sum_{j\in J,l\in I,l\neq i}f_{lj}^{i}r_{l}(s)\right) },s\in
(0,1),
\end{cases}
\]
and so $r_i(s), t_i (s)$ satisfy
the definition of contracting rectangle. By Theorem \ref{th3}, we complete the proof.
\end{proof}

We now present the nonexistence of traveling wave solutions without proof because the proof is similar to that of Theorem \ref{th5.}.
\begin{theorem}\label{thm2}
If $c<C,$ then \eqref{tt} has not a bounded positive  solution
such that
\[
\liminf_{\xi\to + \infty}\phi_i(\xi)>0, \lim_{\xi\to - \infty}\phi_i(\xi)=0, i\in I.
\]
\end{theorem}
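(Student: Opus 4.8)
The plan is to follow the argument of Theorem~\ref{th5.} closely, reducing everything to the single component whose linearized growth determines $C$. Suppose, toward a contradiction, that for some $c_2<C$ the system \eqref{tt} possesses a bounded positive solution $\Phi=(\phi_1,\dots,\phi_m)$ with $\lim_{\xi\to-\infty}\phi_i(\xi)=0$ and $\liminf_{\xi\to+\infty}\phi_i(\xi)>0$ for every $i\in I$. By (R3) there is an index $i_0\in I$, which I relabel as $i_0=1$, such that $\Lambda_1(\lambda,c)>1$ for all $\lambda>0$ whenever $c<C$; without loss of generality $C=\inf_{\lambda>0}\lambda^{-1}\ln\bigl((1+d_1)\int_{\mathbb R}e^{\lambda y}k_1(y)\,dy\bigr)$. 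All attention is then on the first equation of \eqref{tt}.

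First I would prove the two claims that power the scalar comparison, exactly as (D1)--(D2) in Theorem~\ref{th5.}. Writing $S_1=\sum_{j\in J'}e_j^1+\sum_{j\in J,\,l\in I,\,l\neq1}f_{lj}^1$, which is $<1$ by \eqref{n2}, and
\[
\Lambda_1^{\epsilon}(\lambda,c)=\frac{1+d_1}{1+d_1\epsilon S_1}\int_{\mathbb R}e^{\lambda y-\lambda c}k_1(y)\,dy,
\]
the analog of (D1) is that for each $c_1<C$ there is $\epsilon_0>0$ with $\Lambda_1^{\epsilon}(\lambda,c)>1$ for all $\epsilon\in(0,\epsilon_0)$, $c<c_1$, $\lambda>0$; this is immediate from $\Lambda_1(\lambda,c)>1$ (for $c<C$) together with continuity in $\epsilon$. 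The analog of (D2) is that, for any prescribed $\varepsilon>0$, there exists $M>1$ such that the entire competition load of the first species obeys
\[
\sum_{j\in J'}e_j^1\phi_1(y-cj)+\sum_{j\in J,\,l\in I,\,l\neq1}f_{lj}^1\phi_l(y-cj+c)\le\max\{\varepsilon,(M-1)\phi_1(y)\},\quad y\in\mathbb R.
\]
Its proof combines $\lim_{\xi\to-\infty}\phi_l(\xi)=0$ (which makes the load $\le\varepsilon$ far to the left) with the uniform positivity $\inf_{\xi\ge T}\phi_1(\xi)>\delta$ coming from $\liminf_{\xi\to+\infty}\phi_1>0$ and the boundedness of every $\phi_l$ (an analog of Lemma~\ref{le8.0}), which handles the remaining region.

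Next I would fix a small $\epsilon_1$ through the (D1)-analog, apply (D2) with $\varepsilon$ a suitable multiple of $\epsilon_1$, and bound the denominator of the $\phi_1$-equation by $1+d_1\bigl(M\phi_1(y)+S_1\epsilon_1\bigr)$. This shows $p_n(x):=\phi_1(x+c_2n)$ is a supersolution of the scalar monotone recursion with birth function $b(w)=\frac{(1+d_1)w}{1+d_1(Mw+S_1\epsilon_1)}$, whose linear speed is governed by $\Lambda_1^{\epsilon_1}$. Choosing $\epsilon_1$ small enough (the margin is the reason one works with a slightly enlarged coefficient, as in the $2\epsilon$ of Theorem~\ref{th5.}) that this speed strictly exceeds $c_3:=\tfrac{2c_2+C}{3}\in(c_2,C)$, Lemma~\ref{le3} yields $\liminf_{n\to+\infty}\inf_{|x|=c_3n}p_n(x)\ge\frac{1-S_1\epsilon_1}{M}>0$. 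Evaluating along $-x=c_3n$ gives $\xi=(c_2-c_3)n\to-\infty$, so $\lim_{\xi\to-\infty}\phi_1(\xi)=0$ forces $\limsup_{n\to+\infty}\sup_{-x=c_3n}p_n(x)=0$, the desired contradiction.

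I expect the only genuinely new point (relative to Theorem~\ref{th5.}) to be the verification of the (D2)-analog: one must control the \emph{entire} family of delayed intraspecific terms $e_j^1\phi_1(y-cj)$ and interspecific terms $f_{lj}^1\phi_l(y-cj+c)$ by a single constant $M$ simultaneously, which is exactly where the boundedness of the $\phi_l$ and the uniform lower bound on $\phi_1$ near $+\infty$ must be assembled with care. Once (D1)--(D2) are in hand, the scalar spreading estimate of Lemma~\ref{le3} and the contradiction proceed precisely as in Theorem~\ref{th5.}.
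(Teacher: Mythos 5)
Your proposal is correct and takes essentially the same approach the paper intends: the paper omits the proof of Theorem \ref{thm2}, stating only that it is similar to that of Theorem \ref{th5.}, and your argument is exactly that adaptation --- singling out the component attaining $C$, proving the analogs of (D1)--(D2) with $S_1=\sum_{j\in J'}e_j^1+\sum_{j\in J,\,l\in I,\,l\neq 1}f_{lj}^1$ playing the role of $a_1+b_1$, and reaching the contradiction through the scalar spreading estimate of Lemma \ref{le3} evaluated along $-x=c_3 n$. There is no substantive difference between your route and the paper's.
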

Clearly, Theorems \ref{thm1}-\ref{thm2} remain true for the following equation
\begin{equation}\label{li1}
w_{n+1}(x)=\int_{\mathbb{R}}
\frac{(1+d)w_n(x-y)}{1+d(w_n(x-y)+a w_{n-1}(x-y))}k(y)dy,
\end{equation}
in which $x\in\mathbb{R}, n=0,1,2, \cdots,$  $k(y)$ satisfies (k1)-(k3), $ d>0$ and $ a\in [0,1)$ are constants. More precisely, let
\[
c^*= \inf_{\lambda >0}\frac{\ln \left((1+d)\int_{\mathbb{R}}e^{\lambda y}k(y)dy\right)}{\lambda} .
\]
If $c> c^*,$ then \eqref{li1} has a traveling wave solution
$
w_n(x)=\phi (\xi), x+cn=\xi
$
such that
\begin{equation}\label{li2}
0 < \phi (\xi) \le 1, \xi\in\mathbb{R}, \lim_{\xi\to-\infty} \phi (\xi) =0, \lim_{\xi\to + \infty} \phi (\xi) =\frac{1}{1+a}.
\end{equation}
When $c\in (0,c^*),$ then \eqref{li1} does not admit a traveling wave solution satisfying \eqref{li2}. For $c=c^*,$ our main conclusion is given as follows.
\begin{theorem}
If $c=c^*,$ then \eqref{li1} has a traveling wave solution satisfying \eqref{li2}.
\end{theorem}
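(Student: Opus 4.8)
The plan is to recover the critical wave by a limiting procedure along supercritical speeds, exactly as in the case $c=c_1$ of Theorem \ref{th4}. Fix a strictly decreasing sequence $\{c_l\}_{l\in\mathbb{N}}$ with $c_l>c^*$ and $\lim_{l\to+\infty}c_l=c^*$. For each $c_l$ the already established existence for \eqref{li1} at supercritical speed provides a traveling wave $\phi_l(\xi)$ solving the profile equation with speed $c_l$ and satisfying \eqref{li2}. Since a traveling wave is invariant under phase shifts, I would normalize each $\phi_l$ by a translation so that, writing $\gamma:=\frac{1-a}{2}\in(0,1-a)$,
\[
0<\phi_l(\xi)<\gamma \text{ for } \xi<0,\qquad \phi_l(0)=\gamma;
\]
this is possible because $\phi_l(-\infty)=0<\gamma$ forces the set $\{\phi_l\ge\gamma\}$ to be bounded below while $\liminf_{\xi\to+\infty}\phi_l=\frac{1}{1+a}>\gamma$ makes it nonempty, so one may place its left endpoint at the origin.

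Next I would extract a limit. By (k1) and the uniform bound $0<\phi_l\le1$, the family $\{\phi_l\}$ is uniformly bounded and equicontinuous, so the Ascoli-Arzela lemma yields a subsequence (not relabeled) converging uniformly on compact sets, and pointwise on $\mathbb{R}$, to a continuous function $\phi^*$. Letting $l\to+\infty$ in the profile equation and using the dominated convergence theorem together with (k3) shows that $\phi^*$ solves the profile equation of \eqref{li1} with $c=c^*$; being bounded, $\phi^*$ is uniformly continuous. The normalization passes to the limit, giving $0\le\phi^*(\xi)\le\gamma$ for $\xi<0$ and $\phi^*(0)=\gamma$, so $\phi^*\not\equiv0$; then the argument of Lemma \ref{le8.0} (if $\phi^*(\xi_1)=0$, then (k1) forces $\phi^*\equiv0$) gives $0<\phi^*(\xi)\le1$ on $\mathbb{R}$.

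For the behavior at $+\infty$ I would argue as in Theorem \ref{th6}. Because $w_{n-1}\le1$, the profile dominates the monotone subequation with nonlinearity $\frac{(1+d)w}{1+d(w+a)}$, whose linearization at $0$ has multiplier $\frac{1+d}{1+da}>1$ (here $a<1$ is used) and whose positive steady state is $1-a$; hence Lemma \ref{le3} gives $\liminf_{\xi\to+\infty}\phi^*(\xi)\ge1-a>0$. The contracting rectangle for $w_{n+1}=\frac{(1+d)w_n}{1+d(w_n+aw_{n-1})}$ (available since Theorems \ref{thm1}--\ref{thm2} carry over to \eqref{li1}) together with Theorem \ref{th3} then forces $\lim_{\xi\to+\infty}\phi^*(\xi)=\frac{1}{1+a}$.

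The delicate point, and the main obstacle, is $\lim_{\xi\to-\infty}\phi^*(\xi)=0$: at $c=c^*$ the two roots of the characteristic equation $(1+d)\int_{\mathbb{R}}e^{\lambda y-\lambda c}k(y)\,dy=1$ coalesce, so the decaying upper solution that enforced decay for $c>c^*$ degenerates and no exponential a priori bound survives the limit. I would instead reproduce the spreading-speed contradiction of Theorem \ref{th4}. Suppose $\phi^*(\xi_z)>2\epsilon$ for some $\epsilon>0$ along a sequence $\xi_z\to-\infty$; by uniform continuity $\phi^*\ge\epsilon$ on intervals $[\xi_z-\varepsilon,\xi_z+\varepsilon]$. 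Placing a compactly supported continuous datum $\omega\le\phi^*(\cdot+\xi_z)$ of height $\epsilon$ beneath the profile and comparing, via Lemma \ref{le3}, with the monotone subequation above (which establishes at its state $1-a$ at the origin after a finite time $T$), I would obtain $\phi^*(\xi_z+c^*T)>\frac{3}{4}(1-a)>\gamma$ for all $z$; choosing $z$ with $\xi_z+c^*T<0$ contradicts $\phi^*(\xi)\le\gamma$ for $\xi<0$. Hence $\phi^*(-\infty)=0$, and $\phi^*$ satisfies \eqref{li2}.
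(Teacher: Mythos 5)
Your proposal is correct and follows essentially the same route as the paper: a limiting procedure along supercritical speeds $c_l\downarrow c^*$ with a phase-shift normalization pinning the profiles below a fixed height on $(-\infty,0)$, Ascoli--Arzela extraction, the monotone subequation $\frac{(1+d)w}{1+d(w+a)}$ together with Lemma \ref{le3} to bound $\liminf_{\xi\to+\infty}\phi^*(\xi)\ge 1-a$, and the same spreading-speed contradiction (compactly supported datum placed under the profile near $\xi_z\to-\infty$) to force $\phi^*(-\infty)=0$. The only divergence is the identification of the limit at $+\infty$: you invoke the contracting rectangle and Theorem \ref{th3}, whereas the paper argues directly, deriving $\phi^+\le\frac{(1+d)\phi^+}{1+d(\phi^++a\phi^-)}$ and $\phi^-\ge\frac{(1+d)\phi^-}{1+d(\phi^-+a\phi^+)}$ from dominated convergence and concluding $\phi^+=\phi^-=\frac{1}{1+a}$; both arguments are valid, and this is a sub-step variation rather than a different method.
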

\begin{proof}
Similar to the proof of Theorem \ref{th4}, we can prove the existence of traveling wave solutions of \eqref{li1} by passing to a limit function. More precisely,
let $\{c_{l}\}_{l\in \mathbb{N}}$ be a strictly decreasing
sequence satisfying $\lim_{l\rightarrow \infty }c_{l}=c^*$  and for each $l\in \mathbb{N},$ $\phi^{l}(\xi )$ is a traveling wave solution of \eqref{li1} with wave speed $c_l$ such that
\begin{equation*}
\phi ^{l}(\xi )<\frac{1-a}{8},\xi <0,\phi ^{l}(0 )=\frac{1-a}{8},
\end{equation*}
then
there exists $\phi (x+c^*n)$ satisfying \eqref{li1} and
\begin{equation}\label{asfu}
\phi(0)=\frac{1-a}{8}, \phi (\xi) \le \frac{1-a}{8}, \xi \le 0
\end{equation}
and
$
0\le \phi (\xi)\le 1, \xi \in \mathbb{R}.
$
From (k1), we see that $\phi (\xi)$ is uniformly continuous in $\xi \in \mathbb{R}.$

Since
$w_n(x)=\phi (\xi)$ satisfies
\begin{equation*}
\begin{cases}
w_{n+1}(x)\ge \int_{\mathbb{R}}
\frac{(1+d)w_n(x-y)}{1+d(w_n(x-y)+a)}k(y)dy,x\in\mathbb{R}, n=0,1,2, \cdots, \\
w_0(x)=\phi (x), w_0(0)=\frac{1-a}{8},x\in\mathbb{R},
\end{cases}
\end{equation*}
then similar to the proof of Theorem \ref{th6}, we have
\[
\liminf_{\xi\to + \infty} \phi (\xi) \ge 1-a >0.
\]
Denote
\[
\liminf_{\xi\to + \infty} \phi (\xi)=\phi^-, \limsup_{\xi\to + \infty} \phi (\xi)=\phi^+.
\]
Then
\[
\phi^-, \phi^+ \in (0,1].
\]
From the dominated convergence theorem and monotonicity, we have
\[
\phi ^{+}\leq \frac{(1+d)\phi ^{+}}{1+d(\phi ^{+}+a\phi ^{-})},\,\,\,\, \phi ^{-}\geq
\frac{(1+d)\phi ^{-}}{1+d(\phi ^{-}+a\phi ^{+})}
\]
and so
\[
\phi ^{+}= \phi ^{-}=\lim_{\xi\to + \infty} \phi (\xi)=\frac{1}{1+a}.
\]

We now prove that $\lim_{\xi\to-\infty} \phi (\xi) =0.$
Let
$
\limsup_{\xi\to -\infty} \phi (\xi)=\vartheta,
$
then $\vartheta \in [0, \frac{1-a}{8}]$ by \eqref{asfu}.
If $\vartheta >0,$ then the uniform continuity implies that there exist a strictly decreasing sequence $\{\xi_l\}_{l\in \mathbb{N}}$ satisfying
\[
\xi_l <0 \text{ for }l\in\mathbb{N} \text{ and }\lim_{l\to + \infty} \xi_l = -\infty,
\]
and a constant $\delta >0$ such that
\[
\phi (\xi)> \vartheta /2, \xi \in [\xi_l- \delta, \xi_l + \delta],l\in \mathbb{N}.
\]

Consider the initial value problem
\begin{equation*}
\begin{cases}
\underline{w}_{n+1}(x)= \int_{\mathbb{R}}
\frac{(1+d)\underline{w}_n(x-y)}{1+d(\underline{w}_n(x-y)+a)}k(y)dy,x\in\mathbb{R}, n=0,1,2, \cdots, \\
\underline{w}_0(x)= \chi (x),x\in\mathbb{R},
\end{cases}
\end{equation*}
in which $\chi (x)$ is a continuous function and satisfies
\begin{description}
  \item[(x1)] $\chi (x)=0, |x|\ge  \delta;$
  \item[(x2)] $\chi (x)$ is decreasing for $x\in [\delta /2, \delta];$
  \item[(x3)] $\chi (x)=\chi (-x),x\in\mathbb{R}; $
  \item[(x4)] $\chi (x)=\vartheta /2, |x|<  \delta /2.$
\end{description}
By Lemma \ref{le3}, there exists $T\in\mathbb{N}$ such that
\[
\underline{w}_{n}(0)>\frac{1-a}{2} , n\ge  T
\]
and
\[
w_n(\xi_l)\ge \underline{w}_n(0), x\in\mathbb{R}, n\in\mathbb{N}, l\in\mathbb{N}.
\]
We then obtain that
\[
w_T(\xi_l)= \phi (\xi_l +c^*T) >\frac{1-a}{2}, l\in \mathbb{N},
\]
and a contradiction occurs because of
\[
\limsup_{\xi\to -\infty} \phi (\xi)=\vartheta < \frac{1-a}{4}.
\]
The proof is complete.
\end{proof}

\section*{Acknowledgments}
I am grateful to the anonymous referee for his/her valuable suggestions which led to an improvement
of the original manuscript. Research  was supported by NSF of China (Grant No. 11101194).

\end{document}